\begin{document}

\title{\MakeUppercase{\grtitle}}
\date{\today}
\author{\textsc{Aaron Mazel-Gee}}

\begin{abstract}
We provide, among other things: (i) a Bousfield--Kan formula for colimits in $\infty$-categories (generalizing the 1-categorical formula for a colimit as a coequalizer of maps between coproducts); (ii) $\infty$-categorical generalizations of Barwick--Kan's Theorem {\Bn} and Dwyer--Kan--Smith's Theorem {\Cn} (regarding homotopy pullbacks in the Thomason model structure, which themselves vastly generalize Quillen's Theorem B); and (iii) an articulation of the simultaneous and interwoven functoriality of colimits (or dually, of limits) for natural transformations and for pullback along maps of diagram $\infty$-categories.
\end{abstract}

\maketitle

\papernum{3}

\setcounter{tocdepth}{1}
\tableofcontents

\setcounter{section}{-1}

\section{Introduction}

\subsection{Outline}\label{subsection outline}

As the title and abstract suggest, this is essentially an omnibus paper in which we collect a number of useful results in $\infty$-category theory, all having to do in some way or another with the \bit{Grothendieck construction}.  This is instantiated in quasicategories by Lurie's \textit{unstraightening construction}, although we work model-independently (see \cref{subsection gr conventions}).

\begin{itemize}

\item In \cref{section gr}, we fix some notation and terminology surrounding the Grothendieck construction.  We also give some examples, and we highlight some of its important features -- notably its \textit{naturality} (as proved by Gepner--Haugseng--Nikolaus).

\item In \cref{section gr and colims}, we explore the relationship between the Grothendieck construction (and its ``two-sided'' generalization) and colimits in the $\infty$-category of spaces.  For instance, we record an $\infty$-categorical version of Thomason's \textit{homotopy colimit theorem}.  Some of these results are suggestive of the $(\infty,2)$-categorical functoriality of the Grothendieck construction (e.g.\! we say the word ``modification'').

\item In \cref{section op/lax}, we define \textit{lax} and \textit{oplax} natural transformations of functors $\C \ra \Cati$ via the Grothendieck construction.  Using these, we then construct a \bit{global colimit functor} for a cocomplete $\infty$-category $\C$: this is a functor
\[ \Lax(\C) \ra \C \]
from the \textit{lax overcategory} of $\C$
\begin{itemizesmall}
\item which sends an object
\[ (\D \xra{F} \C) \in \Lax(\C) \]
to its colimit
\[ \colim_\D(F) \in \C , \]
and
\item which encodes the simultaneous and interwoven functoriality of colimits in $\C$
\begin{itemizesmall}
\item for \textit{natural transformations} -- that is, for maps in $\Fun(\D,\C)$ -- and
\item for \textit{pullback along maps of diagram $\infty$-categories} -- that is, for maps in $(\Cati)_{/\C}$.
\end{itemizesmall}
\end{itemizesmall}
This immediately dualizes to give an analogous \bit{global limit functor}
\[ \opLax(\D)^{op} \xra{\lim} \D \]
for a complete $\infty$-category $\D$ (now running from the \textit{opposite} of its \textit{oplax} overcategory).

\item In \cref{section ho-p.b.'s in Thomason}, we prove $\infty$-categorical versions of Barwick--Kan's \bit{Theorem {\Bnbit}} and Dwyer--Kan--Smith's \bit{Theorem {\Cnbit}}, thus further extending the following sequence of increasingly general results in 1-category theory.
\begin{itemize}

\item Given a functor
$ \D \xra{F} \C $
satisfying a certain property B, Quillen's \textit{Theorem B} gives a simple description of the fibers
\[ \lim \left(  \begin{tikzcd}
& \D^\gpd \arrow{d}{F^\gpd} \\
\{ c \}^\gpd \arrow[hook]{r} & \C^\gpd
\end{tikzcd} \right) \]
of the induced map
on ($\infty$-)groupoid completions.

\item Given a functor satisfying a certain property {\Bn} (which recovers property B when $n=1$ but becomes weaker as $n$ grows),
\begin{itemizesmall}
\item Dwyer--Kan--Smith's \textit{Theorem {\Bn}} gives a description of the fibers of the induced map on groupoid completions (which recovers Quillen's description when $n=1$ but in trade becomes more complicated as $n$ grows), while
\item their \textit{Theorem {\Cn}} asserts that if $\C$ satisfies a certain property {\Cn}, then \textit{any} functor $\D \ra \C$ satisfies property {\Bn}.
\end{itemizesmall}

\item Given a cospan
$ \D \xra{F} \C \xla{G} \E $
such that $F$ satisfies property {\Bn} (but without any conditions on $G$), Barwick--Kan's \textit{enhanced} Theorem {\Bn} gives a description of the pullback
\[ \lim \left(  \begin{tikzcd}
& \D^\gpd \arrow{d}{F^\gpd} \\
\E^\gpd \arrow{r}[swap]{G^\gpd} & \C^\gpd
\end{tikzcd} \right) \]
of the induced cospan
on groupoid completions (which likewise becomes more complicated as $n$ grows).
\end{itemize}

\item In \cref{section bousfield--kan}, we prove a \bit{Bousfield--Kan formula} for colimits in $\infty$-categories.  This generalizes the 1-categorical formula for a colimit as a coequalizer of maps between coproducts.  We also illustrate its application with concrete examples.

\item In \cref{section thomason}, we construct a \bit{Thomason model structure} on the $\infty$-category $\Cati$ of $\infty$-categories.\footnote{See \cite[\sec 1]{MIC-sspaces} for the definition of a model structure on an $\infty$-category.}  Aside from its intrinsic interest, this model $\infty$-category $(\Cati)_\Thomason$ provides a convenient language for the various results which appear throughout this paper: it gives a presentation of the $\infty$-category $\S$ of spaces, and moreover its localization functor
\[ \Cati \ra \loc{\Cati}{\bW_\Thomason} \simeq \S \]
can be canonically identified with the groupoid completion functor.  In particular, the subcategory $\bW_\Thomason \subset \Cati$ of Thomason weak equivalences consists of precisely those maps which become equivalences upon groupoid completion.

This model structure is analogous to the classical Thomason model structure on the category $\strcat$ of categories; however, it is (in a sense) better behaved, and moreover it completely accounts for a certain quirk that prevents the latter from being lifted directly along the nerve functor.  On the other hand, this model structure bears some rather surprising features of its own.

\end{itemize}

\subsection{Conventions}\label{subsection gr conventions}

\partofMIC

\tableofcodenames

\examplecodename

\citelurie \ \luriecodenames

\butinvt \ \seeappendix

\subsection{Acknowledgments}  

We heartily thank David Ayala, David Gepner, Saul Glasman, and Zhen Lin Low for their helpful input on the material presented in this paper.  We are also grateful to the NSF graduate research fellowship program (grant DGE-1106400) for its financial support during the time that this work was carried out, and to the Pacific Science Institute for the hospitality and kite-surfing that it afforded during the time that this paper was being written.

\section{The Grothendieck construction}\label{section gr}

In this section, we recall some basic notions involving the Grothendieck construction and the various sorts of fibrations that it involves: we discuss co/cartesian (and left/right) fibrations in \cref{subsection fibns for gr}, and we discuss the Grothendieck construction itself in \cref{subsection gr}.  For background, we refer the reader to our companion paper \cite{grjl}, which contains
\begin{itemizesmall}
\item model-independent definitions of co/cartesian morphisms and co/cartesian fibrations,
\item proofs that these model-independent definitions are suitably compatible with their quasicategorical counterparts, and
\item an extended informal discussion the Grothendieck construction.
\end{itemizesmall}

\subsection{Fibrations}\label{subsection fibns for gr}

We begin by fixing the following notation (without really giving any definitions).

\begin{notn}\label{notation for grothendieck construction}
Let $\C$ be an $\infty$-category.
\begin{itemize}

\item We denote by $\LFib(\C)$ the $\infty$-category of \textit{left fibrations} over $\C$; by the dual of Corollary T.2.2.3.12 (see Remark T.2.1.4.12), this is the underlying $\infty$-category of the covariant model structure of Proposition T.2.1.4.7, and is well-defined by Remark T.2.1.4.11.

\item We denote by $\coCartFib(\C)$ the $\infty$-category of \textit{cocartesian fibrations} over $\C$; by the dual of Proposition T.3.1.4.1, this is the underlying $\infty$-category of the cocartesian model structure which is dual to that of Proposition T.3.1.3.7 (see Remark T.3.1.3.9), and is well-defined by Proposition T.3.3.1.1.

\end{itemize}
By Theorem T.3.1.5.1, these two $\infty$-categories sit in a sequence of adjunctions
\[ \begin{tikzcd}[column sep=2cm]
(\Cati)_{/\C} \arrow[transform canvas={yshift=0.7ex}]{r}{\leftloc_{\coCartFib(\C)}}[swap, transform canvas={yshift=0.2ex}]{\scriptstyle \bot} \arrow[transform canvas={yshift=-0.7ex}, hookleftarrow]{r}[swap]{\forget_{\coCartFib(\C)}}
& \coCartFib(\C)
{\arrow[transform canvas={yshift=0.7ex}]{r}{\leftloc_{\LFib(\C)}}[swap, transform canvas={yshift=0.2ex}]{\scriptstyle \bot} \arrow[transform canvas={yshift=-0.7ex}, hookleftarrow]{r}[swap]{\forget_{\LFib(\C)}}}
& \LFib(\C)
{\arrow[transform canvas={yshift=0.7ex}]{r}{\leftloc_{\L(\C)}}[swap, transform canvas={yshift=0.2ex}]{\scriptstyle \bot} \arrow[transform canvas={yshift=-0.7ex}, hookleftarrow]{r}[swap]{\forget_{\L(\C)}}}
& \S_{/\C^\gpd}
\end{tikzcd} \]
in $\Cati$.\footnote{The identification of $\S_{/\C^\gpd}$ as the underlying $\infty$-category of the corresponding model category follows from the fact that $s\Set_\KQ$ is right proper, and hence \textit{any} weak equivalence induces a Quillen adjunction on overcategories.  In particular, the overcategory of a quasicategory already has the correct underlying $\infty$-category, even without replacing the quasicategory by a Kan complex.}

Of course, there are dual notions of \textit{right fibrations} and of \textit{cartesian fibrations}, defined to be those functors that respectively become left fibrations or cocartesian fibrations under the involution $(-)^{op} : \Cati \ra \Cati$.  These then assemble into an analogous string of adjunctions
\[ \begin{tikzcd}[column sep=2cm]
(\Cati)_{/\C} \arrow[transform canvas={yshift=0.7ex}]{r}{\leftloc_{\CartFib(\C)}}[swap, transform canvas={yshift=0.2ex}]{\scriptstyle \bot} \arrow[transform canvas={yshift=-0.7ex}, hookleftarrow]{r}[swap]{\forget_{\CartFib(\C)}}
& \CartFib(\C)
{\arrow[transform canvas={yshift=0.7ex}]{r}{\leftloc_{\RFib(\C)}}[swap, transform canvas={yshift=0.2ex}]{\scriptstyle \bot} \arrow[transform canvas={yshift=-0.7ex}, hookleftarrow]{r}[swap]{\forget_{\RFib(\C)}}}
& \RFib(\C)
{\arrow[transform canvas={yshift=0.7ex}]{r}{\leftloc_{\R(\C)}}[swap, transform canvas={yshift=0.2ex}]{\scriptstyle \bot} \arrow[transform canvas={yshift=-0.7ex}, hookleftarrow]{r}[swap]{\forget_{\R(\C)}}}
& \S_{/\C^\gpd}
\end{tikzcd} \]
in $\Cati$.  By taking opposites, it will often suffice to leave observations about these latter notions implicit.
\end{notn}

We now assemble a number of useful observations.

\begin{rem}
The adjunctions $\leftloc_{\LFib(\C)} \adj \forget_{\LFib(\C)}$ and $\leftloc_{\L(\C)} \adj \forget_{\L(\C)}$ are both left localization adjunctions.\footnote{To see this, note that the first is presented by the composite of a left Bousfield localization followed by a Quillen equivalence, while the second is presented by a left Bousfield localization.}  However, the adjunction $\leftloc_{\coCartFib(\C)} \adj \forget_{\coCartFib(\C)}$ is not: given two objects $(\D \xra{p} \C) , (\E \xra{q} \C) \in \coCartFib(\C)$, a morphism
\[ \begin{tikzcd}
\D \arrow{rr} \arrow{rd}[swap]{p} & & \E \arrow{ld}{q} \\
& \C
\end{tikzcd} \]
in $(\Cati)_{/\C}$ must take $p$-cocartesian morphisms in $\D$ to $q$-cocartesian morphisms in $\E$ in order to define a morphism in $\coCartFib(\C)$.  (However, the right adjoint $\forget_{\coCartFib(\C)}$ is nevertheless the inclusion of a (non-full) subcategory, as indicated.)  In \cref{section op/lax}, we will see that its failure to be a left localization gives rise to the notion of a \textit{lax natural transformation} between objects of $\Fun(\C,\Cati)$.
\end{rem}

\begin{rem}
Given any functor $\D \xra{F} \C$, the resulting horizontal composite in the diagram
\[ \begin{tikzcd}
\D \times_\C \Fun([1],\C) \arrow{r} \arrow{d} & \Fun([1],\C) \arrow{r}{t} \arrow{d}{s} & \C \\
\D \arrow{r}[swap]{F} & \C
\end{tikzcd} \]
is a cocartesian fibration by (the dual of) Corollary T.2.4.7.12.  In fact, by (the dual of) \cite[Theorem 4.5]{GHN}, this is precisely the \textit{free cocartesian fibration} on $F$: this construction gives the left adjoint $\leftloc_{\coCartFib(\C)}$.
\end{rem}

\begin{rem}\label{always groupoid-complete}
In the sequence
\[ (\Cati)_{/\C} \xra{\leftloc_{\coCartFib(\C)}} \coCartFib(\C) \xra{\leftloc_{\LFib(\C)}} \LFib(\C) \xra{\leftloc_{\L(\C)}} \S_{\C^\gpd} \]
of left adjoints, all of the (possibly composite) left adjoints with target $\S_{/\C^\gpd}$ are given by taking an object $\D \ra \C$ to the object $\D^\gpd \ra \C^\gpd$; this follows directly from the definitions of the overlying Quillen adjunctions of Theorem T.3.1.5.1.
\end{rem}

\begin{rem}\label{over a space Gr two adjns become equivces}
When in fact $\C \in \S \subset \Cati$, the adjunctions $\leftloc_{\coCartFib(\C)} \adj \forget_{\coCartFib(\C)}$ and $\leftloc_{\L(\C)} \adj \forget_{\L(\C)}$ become adjoint equivalences by Theorem T.3.1.5.1.
\end{rem}

\subsection{The Grothendieck construction}\label{subsection gr}

We now give the main definition of this section.

\begin{defn}\label{define grothendieck construction}
We define the \bit{Grothendieck construction} to be either equivalence of $\infty$-categories in the diagram
\[ \begin{tikzcd}
\Fun(\C,\Cati) \arrow{r}{\Gr}[swap]{\sim} & \coCartFib(\C) \\
\Fun(\C,\S) \arrow[hook]{u} \arrow{r}{\sim}[swap]{\Gr} & \LFib(\C) ; \arrow[hook]{u}
\end{tikzcd} \]
here, the upper (resp.\! lower) equivalence underlies the right adjoint of the Quillen equivalence which is dual to that of Theorem T.3.2.0.1 (resp.\! that of Theorem T.2.2.1.2) in the special case that the functor of $s\Set$-enriched categories is the identity.  The fact that the diagram commutes follows from the construction (see Definition T.3.2.1.2).  Of course, there are also Grothendieck constructions for cartesian fibrations and right fibrations; we will denote these by
\[ \begin{tikzcd}
\Fun(\C^{op},\Cati) \arrow{r}{\Grop}[swap]{\sim} & \CartFib(\C) \\
\Fun(\C^{op},\S) \arrow[hook]{u} \arrow{r}{\sim}[swap]{\Grop} & \RFib(\C) . \arrow[hook]{u}
\end{tikzcd} \]
When we need to distinguish between these two types of Grothendieck constructions, we will refer to the former sort as \bit{covariant} and to the latter sort as \bit{contravariant}.

For any $\C \xra{F} \Cati$, when we need to refer to it we will write
\[ \Gr(F) \xra{\pr_{\Gr(F)}} \C \]
for the cocartesian fibration that it classifies.  Similarly, given any $\C^{op} \xra{G} \Cati$, when we need to refer to it we will write
\[ \Grop(G) \xra{\pr_{\Grop(G)}} \C \]
for the cartesian fibration that it classifies.
\end{defn}

The following characterization of the Grothendieck construction may at first appear rather abstract.  However, it gives excellent geometric intuition, as we illustrate in \cref{ex cocart over walking arrow} (the first nontrivial case).

\begin{rem}
The covariant Grothendieck construction can be characterized as a \textit{lax colimit}: by \cite[Theorem 7.4]{GHN}, for any functor $\C \xra{F} \Cati$ we have an equivalence
\[ \Gr(F) \simeq \int^{c \in \C} \C_{c/} \times F(c) \]
of its covariant Grothendieck construction with its colimit weighted by $\C^{op} \xra{\C_{-/}} \Cati$.  Thus, whereas the ordinary colimit of the diagram $F$ can be viewed as ``gluing together'' all of the $\infty$-categories $F(c)$ -- that is, taking all of the values $F(c) \in \Cati$ and, for every map $c \xra{\varphi} c'$ in $\C$ adding in an \textit{equivalence}
\[ y \xra{\sim} (F(\varphi))(y) \]
for every $y \in F(c)$ --, in a \textit{lax} colimit, we now only add in a \textit{noninvertible} morphism
\[ y \ra (F(\varphi))(y) \]
corresponding to such data.  Dually, the contravariant Grothendieck construction can be characterized as an \textit{oplax colimit}: by \cite[Corollary 7.6]{GHN}, for any functor $\C^{op} \xra{G} \Cati$ we have an equivalence
\[ \Grop(G) \simeq \int^{c^\opobj \in \C^{op}} \C_{/c} \times G(c^\opobj) \]
of its contravariant Grothendieck construction with its colimit weighted by $\C \xra{\C_{/-}} \Cati$.\footnote{One can also identify the \textit{$\infty$-categories of sections} of Grothendieck constructions with op/lax \textit{limits} (see e.g.\! \cite[Proposition 7.1]{GHN}), but this is less essential for geometric intuition.}
\end{rem}

\begin{ex}\label{ex cocart over walking arrow}
Suppose that $[1] \xra{F} \Cati$ selects a functor $\C_0 \xra{f} \C_1$.  Then, its covariant Grothendieck construction can be identified as
\[ \Gr(F) \simeq \colim \left( \begin{tikzcd}
\C_0 \arrow{r}{f} \arrow{d}[swap]{\id_{\C_0} \times \{1\}} & \C_1 \\
\C_0 \times [1]
\end{tikzcd} \right) , \]
a ``directed mapping cylinder'' for $f$.  Dually, if $[1]^{op} \xra{G} \Cati$ selects a functor $\D_0 \xla{g} \D_1$, then its contravariant Grothendieck construction can be identified as
\[ \Grop(G) \simeq \colim \left( \begin{tikzcd}
\D_1 \arrow{d}[swap]{\id_{\D_1} \times \{0\}} \arrow{r}{g} & \D_0 \\
\D_1 \times [1]
\end{tikzcd} \right) , \]
a ``reversed directed mapping cylinder'' for $g$.
\end{ex}

We now list a few more basic examples of the Grothendieck construction.

\begin{ex}\label{Gr preserves terminal objects}
The equivalence
\[ \Fun(\C,\Cati) \xra[\sim]{\Gr} \coCartFib(\C) \]
necessarily preserves terminal objects.  In the source, the terminal object is $\const(\pt_{\Cati})$, while in the target, the terminal object is the identity functor $\id_\C$ (which is a cocartesian fibration).  Similarly, the identity functor $\id_\C$ is the terminal object of $\CartFib(\C)$.
\end{ex}

\begin{ex}\label{fiber prods of cocart fibns is Gr of product}
Given any two functors $F,G \in \Fun( \C , \Cati)$, consider the pullback diagram
\[ \begin{tikzcd}
\Gr(F) \times_\C \Gr(G) \arrow{r} \arrow{d} & \Gr(F) \arrow{d}{\pr_{\Gr(F)}} \\
\Gr(G) \arrow{r}[swap]{\pr_{\Gr(G)}} & \C
\end{tikzcd} \]
in $\Cati$.  Note first that the composite functor
\[ \Gr(F) \times_\C \Gr(G) \ra \C \]
is again a cocartesian fibration by the dual of (parts (2) and (3) of) Proposition T.2.4.2.3.  Moreover, this is the product of the objects $\pr_{\Gr(F)}$ and $\pr_{\Gr(G)}$ in $(\Cati)_{/\C}$, and since the inclusion $\coCartFib(\C) \subset (\Cati)_{/\C}$ is a right adjoint, this must also be their product in $\coCartFib(\C)$.  Thus, this cocartesian fibration must be classified by the composite functor
\[ \C \xra{(F,G)} \Cati \times \Cati \xra{- \times -} \Cati , \]
i.e.\! the product $F \times G \in \Fun(\C,\Cati)$.
\end{ex}

\begin{ex}\label{Gr is trivial over pt}
In the special case that $\C = \pt_\Cati$, the Grothendieck construction yields an equivalence
\[ \Cati \simeq \Fun(\pt_{\Cati},\Cati) \xra[\sim]{\Gr} \coCartFib(\pt_{\Cati}) . \]
By \cite[Th\'{e}or\`{e}m 6.3]{Toen}, this must be inverse to the composite
\[ \coCartFib(\pt_{\Cati}) \xra{\sim} (\Cati)_{/\pt_{\Cati}} \xra{\sim} \Cati \]
of two forgetful equivalences.
\end{ex}

\begin{ex}\label{slice cats are l/r fibns}
Let $\C \in \Cati$, and let $c \in \C$.  Then, the forgetful functor $\C_{c/} \ra \C$ from the undercategory is a left fibration by Corollary T.2.1.2.2; in fact, it follows from Proposition T.4.4.4.5 that we can identify this as
\[ \C_{c/} \simeq \Gr \left( \C \xra{\hom_\C(c,-)} \S \right) . \]
Dually, we have that $(\C_{/c} \ra \C) \in \RFib(\C)$, and we have the identification
\[ \C_{/c} \simeq \Grop \left( \C^{op} \xra{\hom_\C(-,c)} \S \right) . \]
\end{ex}

\begin{rem}\label{naturality of Gr}
An important property of the Grothendieck construction is its \textit{naturality}: it assembles to a functor
\[ (\Cati)^{op} \ra \Fun([1],\Cati) \]
which sends an $\infty$-category $\C$ to the object
\[ \Fun(\C,\Cati) \xra[\sim]{\Gr} \coCartFib(\C) \]
of $\Fun([1],\Cati)$ by (the dual of) \cite[Corollary A.31]{GHN}.  By its construction (see the proof of \cite[Proposition A.30]{GHN}), over $0 \in [1]$ this functor is given by precomposition of functors to $\Cati$, while over $1 \in [1]$ this functor is given by pullback of cocartesian fibrations.  So for instance, a map $\C \xra{F} \D$ in $\Cati$ determines a map
\[ \begin{tikzcd}
\Fun(\D , \Cati) \arrow{r}{\Gr}[swap]{\sim} \arrow{d}[swap]{- \circ F} & \coCartFib(\D) \arrow{d}{F^*} \\
\Fun(\C , \Cati) \arrow{r}{\sim}[swap]{\Gr} & \coCartFib(\C)
\end{tikzcd} \]
in $\Fun([1],\Cati)$ (where we read this square in $\Cati$ vertically in order to consider it as a morphism between the two horizontal arrows).  Of course, by duality the contravariant Grothendieck construction enjoys analogous naturality.
\end{rem}

Since this observation will arise so frequently for us, we codify it.

\begin{defn}
We refer to the phenomenon described in \cref{naturality of Gr} as the \bit{naturality} of the Grothendieck construction.
\end{defn}

This has the following easy consequence.

\begin{ex}\label{Gr of a constant functor}
Consider the tautological factorization
\[ \begin{tikzcd}
\C \arrow{rr}{\const(\D)} \arrow[dashed]{rd} & & \Cati . \\
& \pt_\Cati \arrow{ru}[swap]{\D}
\end{tikzcd} \]
By \cref{Gr is trivial over pt}, we have a canonical equivalence
\[ \Gr \left( \pt_\Cati \xra{\D} \Cati \right) \simeq \D . \]
Hence, the naturality of the Grothendieck construction implies that the pullback square
\[ \begin{tikzcd}
\C \times \D \arrow{r} \arrow{d} & \D \arrow{d} \\
\C \arrow{r} & \pt_\Cati
\end{tikzcd} \]
in $\Cati$ provides a canonical equivalence
\[ \Gr \left( \C \xra{\const(\D)} \Cati \right) \simeq \C \times \D , \]
with the cocartesian fibration down to $\C$ identifying as $\pr_{\Gr(\const(\D))} \simeq \pr_\C$, the projection onto the first factor.  (From here, we can recover \cref{Gr preserves terminal objects} as the special case where $\D = \pt_\Cati$.)  Similarly, we have that
\[ \Grop \left( \C^{op} \xra{\const(\D)} \Cati \right) \simeq \C \times \D , \]
with $\pr_{\Grop(\const(\D))} \simeq \pr_\C$: in other words, the projection $\C \times \D \xra{\pr_\C} \C$ is simultaneously a cocartesian fibration and a cartesian fibration, in either case classified by the constant functor at the object $\D \in \Cati$.
\end{ex}

\begin{defn}\label{def fib}
Fix some $\C \xra{F} \Cati$.  By the naturality of the Grothendieck construction (and \cref{Gr is trivial over pt}), for any $x \in \C$ there is a canonical pullback square
\[ \begin{tikzcd}
F(x) \arrow{r} \arrow{d} & \Gr(F) \arrow{d} \\
\{ x \} \arrow[hook]{r} & \C
\end{tikzcd} \]
in $\Cati$.  We refer to $F(x)$ as the \bit{fiber} of the cocartesian fibration over the object $x \in \C$, and we refer to the above commutative square as a \bit{fiber inclusion}.\footnote{At the level of quasicategories, this can be computed by the inclusion of the fiber over a vertex corresponding to $x \in \C$, which is a homotopy fiber in $s\Set_\Joyal$ by the Reedy trick (and the implications of Remark T.2.0.0.5).}
\end{defn}

\begin{rem}
A fiber inclusion will not generally be the inclusion of a full subcategory: it will not contain those morphisms covering nontrivial endomorphisms.  In fact, in the extreme case that we take $\C = Y \in \S \subset \Cati$ to be an $\infty$-groupoid, the functor corepresented by an object $y \in Y$ will have
\[ \Gr \left( Y \xra{\hom_Y(y,-)} \S \right) \simeq Y_{y/} \simeq \pt_\S \]
(recall \cref{slice cats are l/r fibns}); then, the fiber inclusion over the object $y \in Y$ itself will be given by the pullback square
\[ \begin{tikzcd}
\Omega Y \arrow{r} \arrow{d} & \pt_\S \arrow{d} \\
\{ y \} \arrow[hook]{r} & Y
\end{tikzcd} \]
in $\S \subset \Cati$, the ``inclusion'' of the based loopspace of $Y$ at $y$ into the terminal space.
\end{rem}

We have the following concrete identification of the left localization which takes cocartesian fibrations to left fibrations.

\begin{prop}\label{cocartfib to lfib corresponds to groupoid-completion}
Under the equivalences given by the Grothendieck construction, the left localization
\[ \coCartFib(\C) \xra{\leftloc_{\LFib(\C)}} \LFib(\C) \]
corresponds to the functor $\Fun(\C,\Cati) \ra \Fun(\C,\S)$ given by postcomposition with
\[ \Cati \xra{({-})^\gpd} \S . \]
\end{prop}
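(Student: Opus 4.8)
The plan is to deduce this formally from the commutative square of \cref{define grothendieck construction} by passing to left adjoints.

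Recall from \cref{notation for grothendieck construction} that $\leftloc_{\LFib(\C)}$ is by definition the left adjoint of the inclusion $\forget_{\LFib(\C)} : \LFib(\C) \hookrightarrow \coCartFib(\C)$, and that the commutativity of the square in \cref{define grothendieck construction} says precisely that, under the Grothendieck construction equivalences, this inclusion $\forget_{\LFib(\C)}$ corresponds to the functor $\Fun(\C,\S) \to \Fun(\C,\Cati)$ given by postcomposition with the inclusion $\S \hookrightarrow \Cati$. Conjugating by these equivalences, it follows that the functor $\Fun(\C,\Cati) \to \Fun(\C,\S)$ corresponding to $\leftloc_{\LFib(\C)}$ is characterized as a left adjoint of postcomposition with $\S \hookrightarrow \Cati$.

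It therefore suffices to observe that postcomposition with $(-)^\gpd : \Cati \to \S$ is likewise a left adjoint of postcomposition with $\S \hookrightarrow \Cati$; the two then agree by the essential uniqueness of adjoints, which is exactly the assertion of the proposition. This observation is immediate from two standard facts: that $(-)^\gpd$ is itself left adjoint to the inclusion $\S \hookrightarrow \Cati$ (a characterization of the groupoid completion functor), and that postcomposition with a fixed $\infty$-category preserves adjunctions.

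As this argument is essentially formal, I expect the only real work to be bookkeeping: identifying the inclusion appearing in \cref{define grothendieck construction} with the functor $\forget_{\LFib(\C)}$ (both being the canonical inclusion of $\LFib(\C)$ as a subcategory of $\coCartFib(\C)$), keeping straight the directions of the various functors when passing between a functor and its adjoint, and reading ``corresponds to'' up to the canonical equivalence supplied by uniqueness of adjoints. One could instead verify the claim directly on the Quillen adjunctions of Theorem T.3.1.5.1, or check it fiberwise using \cref{def fib} together with the fact that a cocartesian fibration is a left fibration exactly when each of its fibers is an $\infty$-groupoid; but the adjoint-uniqueness route is shorter and stays model-independent.
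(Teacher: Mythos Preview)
Your proof is correct and is essentially the same argument as the paper's: both use the commutative square of \cref{define grothendieck construction} to identify the right adjoints, observe that applying $\Fun(\C,-)$ to the adjunction $(-)^\gpd \dashv (\S \hookrightarrow \Cati)$ yields the desired adjunction on functor categories, and conclude by uniqueness of left adjoints. The paper's version is just a terser statement of the same reasoning.
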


\begin{proof}
This follows from the uniqueness of left adjoints and the commutativity of the diagram in \cref{define grothendieck construction}: the adjunction $\Fun(\C,\Cati) \adjarr \Fun(\C,\S)$ comes from applying the functor $\Fun(\C,-) : \Cati \ra \Cati$ to the adjunction $({-})^\gpd : \Cati \adjarr \S$.
\end{proof}

\section{The Grothendieck construction and colimits of spaces}\label{section gr and colims}

In this section, we study the relationship between the Grothendieck construction and colimits in the $\infty$-category $\S$ of spaces: in \cref{subsection gr and colims} we give some basic results, in \cref{subsection two-sided gr and colims} we extend these to the ``two-sided'' Grothendieck construction, and in \cref{subsection two-categorical gr and colims} we collect some results which are suggestive of the $(\infty,2)$-categorical functoriality of the Grothendieck construction.

\subsection{The Grothendieck construction and colimits}\label{subsection gr and colims}

We begin with the following fundamental fact, on which all further results in this direction are based.  Its 1-categorical version, Thomason's \textit{homotopy colimit theorem}, first appeared as \cite[Theorem 1.2]{Thomasonhocolim}.

\begin{prop}\label{groupoid-completion of the grothendieck construction}
For any $\C \xra{F} \Cati$, the Grothendieck construction computes its homotopy colimit when considered as diagram in $(\Cati)_\Thomason$, i.e.\!
\[ \Gr(F)^\gpd \simeq \colim \left( \C \xra{F} \Cati \xra{({-})^\gpd} \S \right) . \]
\end{prop}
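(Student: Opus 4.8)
The plan is to leverage the naturality of the Grothendieck construction (\cref{naturality of Gr}) together with \cref{cocartfib to lfib corresponds to groupoid-completion} to reduce the statement to a known fact about left fibrations, namely that the groupoid completion of the Grothendieck construction of a functor $\C \ra \S$ is its colimit. The point is that $({-})^\gpd : \Cati \ra \S$ factors as a left adjoint, and postcomposition with it corresponds, under $\Gr$, to the localization $\leftloc_{\LFib(\C)}$; since groupoid completion $\Cati \ra \S$ further factors through $\LFib(\C) \ra \S_{/\C^\gpd} \ra \S$, the composite of left adjoints sends $\Gr(F) \ra \C$ to $\Gr(F)^\gpd \ra \C^\gpd$ and then forgets down to $\S$ (recall \cref{always groupoid-complete}).

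Concretely, I would first observe that $\Gr(F)^\gpd$ is the image of the object $F \in \Fun(\C,\Cati)$ under the composite
\[ \Fun(\C,\Cati) \xra{\Gr} \coCartFib(\C) \xra{\leftloc_{\LFib(\C)}} \LFib(\C) \xra{\leftloc_{\L(\C)}} \S_{/\C^\gpd} \xra{\forget} \S , \]
using \cref{always groupoid-complete} to identify the last two steps as $(\D \ra \C) \mapsto (\D^\gpd \ra \C^\gpd) \mapsto \D^\gpd$. By \cref{cocartfib to lfib corresponds to groupoid-completion}, the first two steps agree with $\Fun(\C,\Cati) \xra{({-})^\gpd \circ -} \Fun(\C,\S) \xra{\Gr} \LFib(\C)$. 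So it suffices to treat the case of a functor $G := ({-})^\gpd \circ F : \C \ra \S$ and show that the underlying space of its left fibration $\Gr(G) \ra \C$ — equivalently, $\Gr(G)^\gpd$ — computes $\colim_\C G$.

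For that last step, the cleanest route is to invoke the straightening/unstraightening equivalence directly: the colimit functor $\colim_\C : \Fun(\C,\S) \ra \S$ is left adjoint to the constant-diagram functor, and under the equivalence $\Fun(\C,\S) \simeq \LFib(\C)$ the constant-diagram functor corresponds to pullback $\S \simeq \LFib(\pt) \xra{\text{(base change along } \C \ra \pt)} \LFib(\C)$ (by the naturality of $\Gr$, \cref{naturality of Gr}, applied to $\C \ra \pt_\Cati$, together with \cref{Gr is trivial over pt}). The left adjoint of that base-change functor is the forgetful functor $\LFib(\C) \ra \LFib(\pt) \simeq \S$ sending $\Gr(G) \ra \C$ to $\Gr(G)^\gpd$ — this is precisely the composite of the remaining two left adjoints in the string of \cref{notation for grothendieck construction} restricted along $\LFib(\C)$, noting $\pt^\gpd \simeq \pt$. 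By uniqueness of left adjoints this identifies $\Gr(G)^\gpd \simeq \colim_\C G$, which is what we wanted. Alternatively, one can cite Lurie's Corollary T.3.3.4.6, which states exactly that the colimit of an $\S$-valued diagram is the underlying space of its unstraightening.

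The main obstacle is bookkeeping rather than mathematics: one must be careful that the several "left adjoint out of $\LFib(\C)$ down to $\S$" appearing above — the composite $\forget \circ \leftloc_{\L(\C)}$ from \cref{notation for grothendieck construction}, the left adjoint to constant diagrams transported across $\Gr$, and the map computing $\colim_\C$ — are genuinely the same functor, so that the uniqueness-of-left-adjoints argument applies cleanly; the content of \cref{always groupoid-complete} and \cref{naturality of Gr} is exactly what makes these identifications canonical. If one instead cites Corollary T.3.3.4.6 for the $\S$-valued case, then essentially all the work is in the reduction via \cref{cocartfib to lfib corresponds to groupoid-completion}, and there is nothing hard left.
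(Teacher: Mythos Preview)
Your argument is correct, but it takes a different route than the paper's. The paper's proof is a two-liner: it cites Corollary T.3.3.4.3 (which identifies the colimit in $\Cati$ of a diagram $\C \xra{F} \Cati$ in terms of its Grothendieck construction) and then observes that groupoid completion, being a left adjoint, commutes with colimits. In other words, the paper works at the level of $\Cati$-valued diagrams first and only passes to $\S$ at the very end.

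You instead reduce \emph{first} to the $\S$-valued case by invoking \cref{cocartfib to lfib corresponds to groupoid-completion} and \cref{always groupoid-complete}, and then verify the $\S$-valued case via a uniqueness-of-left-adjoints argument (or equivalently by citing T.3.3.4.6). This is a perfectly valid alternative; the bookkeeping you flag (identifying the various left adjoints out of $\LFib(\C)$) is handled exactly as you say by \cref{always groupoid-complete} and the naturality of $\Gr$. The trade-off: the paper's route is shorter and relies on a single external citation, whereas your route is more ``internal'' in that it recycles machinery already developed in \cref{section gr} and makes the role of the left-fibration localization explicit. Neither approach is deeper than the other; yours simply unpacks the content of T.3.3.4.3 into pieces you can assemble from results already on hand.
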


\begin{proof}
This follows from Corollary T.3.3.4.3 and the fact that groupoid completion (being a left adjoint) commutes with colimits.
\end{proof}

\begin{cor}\label{natural thomason weak equivalence induces a thomason weak equivalence on grothendieck constructions}
Suppose that we are given a pair of functors $F,G : \C \rra (\Cati)_\Thomason$ and a natural weak equivalence $F \we G$.  Then the induced map $\Gr(F) \ra \Gr(G)$ is a weak equivalence in $(\Cati)_\Thomason$.
\end{cor}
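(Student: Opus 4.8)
The plan is to reduce the statement to \cref{groupoid-completion of the grothendieck construction} together with the fact that groupoid completion is a functor. Concretely, a natural weak equivalence $F \we G$ of functors $\C \rra (\Cati)_\Thomason$ is by definition a natural transformation whose component at each object $x \in \C$ lies in $\bW_\Thomason$, i.e.\! becomes an equivalence upon applying $({-})^\gpd : \Cati \ra \S$. Postcomposing the natural transformation $F \Ra G$ with the functor $({-})^\gpd$ therefore yields a natural transformation $F^\gpd \Ra G^\gpd$ of functors $\C \rra \S$ that is a \emph{pointwise equivalence}, hence an equivalence in $\Fun(\C,\S)$.

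The key steps, in order, are as follows. First I would invoke \cref{groupoid-completion of the grothendieck construction} twice, to obtain canonical equivalences $\Gr(F)^\gpd \simeq \colim_\C (F^\gpd)$ and $\Gr(G)^\gpd \simeq \colim_\C(G^\gpd)$, where $F^\gpd$ and $G^\gpd$ abbreviate the composites $\C \xra{F} \Cati \xra{({-})^\gpd} \S$ and similarly for $G$. Second, I would observe that the map $\Gr(F) \ra \Gr(G)$ induced by the naturality of the Grothendieck construction fits into a commuting square with these equivalences, so that $(\Gr(F) \ra \Gr(G))^\gpd$ is identified with the map $\colim_\C(F^\gpd) \ra \colim_\C(G^\gpd)$ induced on colimits by the natural transformation $F^\gpd \Ra G^\gpd$. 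Third, since $F^\gpd \Ra G^\gpd$ is an equivalence in $\Fun(\C,\S)$ (being a pointwise equivalence), and $\colim_\C : \Fun(\C,\S) \ra \S$ is a functor, the induced map on colimits is an equivalence. Therefore $(\Gr(F) \ra \Gr(G))^\gpd$ is an equivalence in $\S$, which is precisely to say that $\Gr(F) \ra \Gr(G)$ lies in $\bW_\Thomason$, i.e.\! is a weak equivalence in $(\Cati)_\Thomason$.

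The only point requiring any care -- and the closest thing to a genuine obstacle -- is the compatibility asserted in the second step: that the map $\Gr(F) \ra \Gr(G)$ arising from a natural transformation $F \Ra G$ really does induce, after groupoid completion, the map of colimits arising from $F^\gpd \Ra G^\gpd$. This is the functoriality of the equivalence of \cref{groupoid-completion of the grothendieck construction} in the diagram $F$, which in turn comes from the fact that the Grothendieck construction $\Gr : \Fun(\C,\Cati) \ra \coCartFib(\C)$ is an equivalence of $\infty$-categories (so certainly functorial in $F$) and that the subsequent left adjoints down to $\S_{/\C^\gpd}$ and the forgetful functor $\S_{/\C^\gpd} \ra \S$ are functors; combined with \cref{always groupoid-complete}, which records that these composite left adjoints act by $\D \mapsto \D^\gpd$, one sees that the composite $\Fun(\C,\Cati) \xra{\Gr} \coCartFib(\C) \ra \S$ is naturally equivalent to $F \mapsto \colim_\C(F^\gpd)$, with the naturality being exactly what we need. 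Everything else is formal.
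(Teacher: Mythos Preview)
Your proof is correct and follows essentially the same approach as the paper's: postcompose with $({-})^\gpd$ to get an equivalence in $\Fun(\C,\S)$, then apply \cref{groupoid-completion of the grothendieck construction} on both sides to identify the groupoid completions of the two Grothendieck constructions with the corresponding colimits. If anything, you are more explicit than the paper about the naturality needed to identify the induced map $(\Gr(F) \ra \Gr(G))^\gpd$ with the map on colimits, which the paper leaves implicit.
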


\begin{proof}
Combining the equivalence
\[ (-)^\gpd \circ F \xra{\sim} (-)^\gpd \circ G \]
in $\Fun(\C,\S)$ with two applications of the equivalence of \cref{groupoid-completion of the grothendieck construction}, we obtain a composite equivalence
\[ \Gr(F_1)^\gpd \simeq \colim \left( \C \xra{F_1} \Cati \xra{({-})^\gpd} \S \right) \xra{\sim} \colim \left( \C \xra{F_2} \Cati \xra{({-})^\gpd} \S \right) \simeq \Gr(F_2)^\gpd \]
in $\S$.
\end{proof}

\subsection{The two-sided Grothendieck construction and colimits}\label{subsection two-sided gr and colims}

We will also be interested in the following variant of the Grothendieck construction and its interaction with colimits in $\S$.

\begin{defn}\label{define two-sided Gr}
Given any (ordered) pair of functors $\C^{op} \xra{F} \Cati$ and $\C \xra{G} \Cati$, we define the \bit{two-sided Grothendieck construction} of $F$ and $G$ to be the pullback
\[ \Gr(F,\C,G) = \lim \left( \begin{tikzcd}
& \Grop(F) \arrow{d} \\
\Gr(G) \arrow{r} & \C
\end{tikzcd} \right) \]
in $\Cati$.
\end{defn}

\begin{prop}\label{invce of two-sided Gr}
Suppose that we are given
\begin{itemizesmall}
\item a pair of functors $F,F' : \C^{op} \rra (\Cati)_\Thomason$ and a natural weak equivalence $F \we F'$, and
\item a pair of functors $G,G' : \C \rra (\Cati)_\Thomason$ and a natural weak equivalence $G \we G'$.
\end{itemizesmall}
Then, the induced map
\[ \Gr(F,\C,G) \ra \Gr(F',\C,G') \]
is a weak equivalence in $(\Cati)_\Thomason$.
\end{prop}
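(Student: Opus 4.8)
The plan is to recognize the two-sided Grothendieck construction as an \emph{ordinary} (one-sided) Grothendieck construction in two complementary ways, and then to invoke \cref{natural thomason weak equivalence induces a thomason weak equivalence on grothendieck constructions} together with its evident dual. First I would reduce to changing one variable at a time: the map in question factors as
\[ \Gr(F,\C,G) \ra \Gr(F,\C,G') \ra \Gr(F',\C,G') , \]
and since Thomason weak equivalences are closed under composition (indeed satisfy two-out-of-three), it suffices to treat separately the two cases $(i)$ $F = F'$ with $G \we G'$, and $(ii)$ $G = G'$ with $F \we F'$.

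For case $(i)$: the pullback square of \cref{define two-sided Gr} exhibits the projection $\Gr(F,\C,G) \ra \Grop(F)$ as the pullback of the cocartesian fibration $\pr_{\Gr(G)} : \Gr(G) \ra \C$ along $\pr_{\Grop(F)} : \Grop(F) \ra \C$. By the naturality of the Grothendieck construction (\cref{naturality of Gr}), applied to the functor $\pr_{\Grop(F)}$, this pullback is the cocartesian fibration classified by the composite $\Grop(F) \xra{\pr_{\Grop(F)}} \C \xra{G} \Cati$; thus there is an equivalence $\Gr(F,\C,G) \simeq \Gr \big( G \circ \pr_{\Grop(F)} \big)$ over $\Grop(F)$, and (by functoriality of pullbacks and of $\Gr$, together with the same naturality statement) it is natural in $G$. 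Now a natural weak equivalence $G \we G'$ is carried by precomposition with $\pr_{\Grop(F)}$ — which acts objectwise, hence levelwise on $\bW_\Thomason$ — to a natural weak equivalence $G \circ \pr_{\Grop(F)} \we G' \circ \pr_{\Grop(F)}$ between functors $\Grop(F) \rra (\Cati)_\Thomason$, so \cref{natural thomason weak equivalence induces a thomason weak equivalence on grothendieck constructions} (applied with base $\Grop(F)$) shows that the induced map $\Gr(F,\C,G) \ra \Gr(F,\C,G')$ is a Thomason weak equivalence.

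For case $(ii)$: symmetrically, the same square exhibits $\Gr(F,\C,G) \ra \Gr(G)$ as the pullback of the cartesian fibration $\pr_{\Grop(F)} : \Grop(F) \ra \C$ along $\pr_{\Gr(G)} : \Gr(G) \ra \C$; the contravariant form of the naturality of the Grothendieck construction identifies this with the cartesian fibration classified by $\Gr(G)^{op} \xra{(\pr_{\Gr(G)})^{op}} \C^{op} \xra{F} \Cati$, i.e.\ $\Gr(F,\C,G) \simeq \Grop \big( F \circ (\pr_{\Gr(G)})^{op} \big)$, naturally in $F$. Then $F \we F'$ induces a natural weak equivalence $F \circ (\pr_{\Gr(G)})^{op} \we F' \circ (\pr_{\Gr(G)})^{op}$ between functors $\Gr(G)^{op} \rra (\Cati)_\Thomason$, so the dual of \cref{natural thomason weak equivalence induces a thomason weak equivalence on grothendieck constructions} (left implicit per the convention of \cref{notation for grothendieck construction}) shows that $\Gr(F,\C,G) \ra \Gr(F',\C,G)$ is a Thomason weak equivalence. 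Composing the conclusions of the two cases along the factorization above finishes the proof.

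I do not expect a genuine obstacle here: the content is entirely formal once \cref{naturality of Gr} is in hand. The one thing to be careful about is the bookkeeping of variances when applying naturality to the structural projections $\pr_{\Grop(F)}$ and $\pr_{\Gr(G)}$ (the first base-changes a cocartesian fibration, the second a cartesian one), and the observation that each identification of $\Gr(F,\C,G)$ with a one-sided Grothendieck construction is natural in the remaining variable, so that a natural weak equivalence is indeed carried to a natural weak equivalence of classifying diagrams; both are immediate from the statement of \cref{naturality of Gr}. (One could alternatively prove the result by establishing a ``two-sided'' homotopy colimit theorem computing $\Gr(F,\C,G)^\gpd$ directly as an iterated colimit of the $F(c)^\gpd$ and $G(c)^\gpd$ and appealing to \cref{groupoid-completion of the grothendieck construction}, but the route above avoids any explicit colimit computation.)
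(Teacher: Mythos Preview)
Your proof is correct and follows essentially the same approach as the paper's: reduce to changing one variable at a time, identify the two-sided Grothendieck construction as a one-sided one via the naturality of \cref{naturality of Gr}, and apply \cref{natural thomason weak equivalence induces a thomason weak equivalence on grothendieck constructions} (or its dual). The only difference is presentational: the paper invokes duality to reduce to a single case (your case~(ii)) rather than spelling out both.
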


\begin{proof}
The naturality of the Grothendieck construction induces an evident naturality of the two-sided Grothendieck construction; in light of this, by duality and since $\bW_\Thomason \subset \Cati$ is closed under composition, it suffices to prove the claim in the special case that $G = G'$ and that the natural weak equivalence $G \we G'$ is simply $\id_G$.  Then, by the naturality of the Grothendieck construction we have an equivalence
\[ \Gr(F,\C,G) = \lim \left( \begin{tikzcd}
& \Grop(F) \arrow{d} \\
\Gr(G) \arrow{r} & \C
\end{tikzcd} \right) \simeq \Grop(F \circ \pr_{\Gr(G)}^{op}) \]
in $\CartFib(\Gr(G))$, and similarly we have an equivalence
\[ \Gr(F',\C,G) \simeq \Grop(F' \circ \pr_{\Gr(G)}^{op}) \]
in $\CartFib(\Gr(G))$.  Moreover, by assumption, the map
\[ F \circ \pr_{\Gr(G)}^{op} \ra F' \circ \pr_{\Gr(G)}^{op} \]
in $\Fun(\Gr(G)^{op} , \Cati)$ has that for every $y \in \Gr(G)^{op}$, the induced map
\[ (F \circ \pr_{\Gr(G)}^{op})(y)^\gpd \ra (F' \circ \pr_{\Gr(G)}^{op})(y)^\gpd \]
is an equivalence in $\S$: in other words, the map
\[ (-)^\gpd \circ F \circ \pr_{\Gr(G)}^{op} \ra (-)^\gpd \circ F' \circ \pr_{\Gr(G)}^{op} \]
is an equivalence in $\Fun(\Gr(G)^{op},\S)$.  The claim now follows from (the dual of) \cref{natural thomason weak equivalence induces a thomason weak equivalence on grothendieck constructions}.
\end{proof}

\subsection{Higher-categorical functoriality of the Grothendieck construction and colimits}\label{subsection two-categorical gr and colims}

We now assemble a few results which are suggestive of the $(\infty,2)$-categorical functoriality of the Grothendieck construction.  However, we do not pursue such functoriality in any systematic way.

\begin{prop}\label{triangle of colimits}
A diagram
\[ \begin{tikzcd}
\C \arrow[bend left=50]{r}{E}[swap, pos=0.1, transform canvas={yshift=-0.4em}]{\left. \alpha \right\Downarrow} \arrow[bend right=50]{r}[swap]{F} & \D \arrow{r}{G} & \Cati
\end{tikzcd} \]
induces a commutative triangle
\[ \begin{tikzcd}[row sep=0.5cm]
\colim_\C((-)^\gpd \circ G \circ E) \arrow{dd} \arrow{rd} \\
& \colim_\D((-)^\gpd \circ G) \\
\colim_\C((-)^\gpd \circ G \circ F) \arrow{ru}
\end{tikzcd} \]
in $\S$.
\end{prop}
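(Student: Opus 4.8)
The plan is to realize the $2$-morphism $\alpha$ as a functor $H : \C \times [1] \ra \D$ with $H|_{\C \times \{0\}} = E$ and $H|_{\C \times \{1\}} = F$, and to reduce the entire statement to a Fubini-type manipulation of the $(\C \times [1])$-indexed diagram
\[ w := \left( \C \times [1] \xra{H} \D \xra{G} \Cati \xra{(-)^\gpd} \S \right) , \]
writing $w_0 := w|_{\C \times \{0\}} = (-)^\gpd \circ G \circ E$ and $w_1 := w|_{\C \times \{1\}} = (-)^\gpd \circ G \circ F$.  The key input is a cofinality observation: the inclusion $\C \times \{1\} \hookrightarrow \C \times [1]$ is cofinal, since for any object $(c,i)$ the relevant slice $(\C \times \{1\}) \times_{\C \times [1]} (\C \times [1])_{(c,i)/}$ is equivalent to $\C_{c/}$, which is weakly contractible (it has an initial object).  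Hence the map $\colim_\C(w_1) \ra \colim_{\C \times [1]}(w)$ induced by this inclusion is an equivalence in $\S$.

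Given this, I would assemble the triangle as follows.  The inclusion $\C \times \{0\} \hookrightarrow \C \times [1]$ induces a map $\colim_\C(w_0) \ra \colim_{\C \times [1]}(w)$, and the functor $H$ (together with the identification $w \simeq ((-)^\gpd \circ G) \circ H$) induces a map $\colim_{\C \times [1]}(w) \ra \colim_\D((-)^\gpd \circ G)$.  Precomposing the latter with the former gives the map from $\colim_\C(w_0)$ to $\colim_\D((-)^\gpd \circ G)$; precomposing it with the inverse of the equivalence above gives the map from $\colim_\C(w_1)$; and composing $\colim_\C(w_0) \ra \colim_{\C \times [1]}(w)$ with that same inverse gives a map $\colim_\C(w_0) \ra \colim_\C(w_1)$.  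Since all three of these maps factor through the single object $\colim_{\C \times [1]}(w)$ in a compatible way, the resulting triangle in $\S$ commutes on the nose.

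It then remains to identify these three maps with the ones named in the statement.  For the two slanted maps: since $H$ restricts to $E$ and to $F$ along $\C \times \{0\}$ and $\C \times \{1\}$, functoriality of the colimit in its index identifies them with the maps $\colim_\C(w_0) \ra \colim_\D((-)^\gpd \circ G)$ and $\colim_\C(w_1) \ra \colim_\D((-)^\gpd \circ G)$ induced by $E$ and by $F$ (which, via \cref{groupoid-completion of the grothendieck construction} and the naturality of the Grothendieck construction, are the groupoid completions of the pullback maps $\Gr(G \circ E) \simeq E^* \Gr(G) \ra \Gr(G)$ and $\Gr(G \circ F) \simeq F^* \Gr(G) \ra \Gr(G)$).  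For the vertical map: a functor $\C \times [1] \ra \S$ is the same datum as a natural transformation $w_0 \Rightarrow w_1$ in $\Fun(\C, \S)$ --- here the one obtained from $\alpha$ by postcomposing with $G$ and then with $(-)^\gpd$ --- and under the Fubini identification $\colim_{\C \times [1]} \simeq \colim_{[1]} \circ \colim_\C$, using that $\colim_{[1]}$ of an arrow is its target, the composite $\colim_\C(w_0) \ra \colim_{\C \times [1]}(w) \xla{\sim} \colim_\C(w_1)$ is exactly the image of that natural transformation under $\colim_\C$.  The main (and only, and mild) obstacle is this last round of bookkeeping --- confirming that the triangle extracted from the $(\C \times [1])$-diagram is literally the one asserted; the conceptual content, that a $2$-morphism between indexing functors yields a homotopy between the two induced maps on colimits, is forced once the cofinality observation is in place.
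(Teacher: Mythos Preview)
Your argument is correct and takes a genuinely different route from the paper's own proof.  Both approaches begin by encoding $\alpha$ as a functor $H : \C \times [1] \ra \D$, but they diverge from there.  The paper stays in $\Cati$: via \cref{lax triangle} it builds a \emph{lax} commutative triangle
\[ \begin{tikzcd}[row sep=0.5cm]
\Gr(G \circ E) \arrow{dd} \arrow{rd}[swap, sloped, pos=0.25, transform canvas={yshift=-1.5em}]{\Downarrow} \\
& \Gr(G) \\
\Gr(G \circ F) \arrow{ru}
\end{tikzcd} \]
at the level of Grothendieck constructions, and only then applies $(-)^\gpd$, using that natural transformations become homotopies under groupoid completion (\cref{rnerves:nat trans induces equivce betw maps on gpd-complns}) together with Thomason's theorem (\cref{groupoid-completion of the grothendieck construction}) to identify the vertices as the stated colimits.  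You instead apply $(-)^\gpd$ immediately to land in $\S$, and then argue with pure colimit calculus: finality of $\C \times \{1\} \hookra \C \times [1]$ (which is also an instance of \cref{product of final functors is final}) and a Fubini identification.  Your route is more elementary and self-contained --- it never needs the lax triangle lemma or the explicit manipulation of cocartesian fibrations over $[1]$ --- while the paper's route is deliberately chosen to showcase the Grothendieck-construction machinery that is the subject of the paper.  The ``bookkeeping'' you flag (matching the Fubini-extracted vertical map with the one induced by the natural transformation $(-)^\gpd \circ \id_G \circ \alpha$) is indeed routine: both descriptions unwind to the canonical map $\colim_\C(w_0) \ra \colim_\C(w_1)$ coming from the functoriality of $\colim_\C$.
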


\begin{proof}
This follows by combining \cref{groupoid-completion of the grothendieck construction}, \cref{lax triangle}, and \cref{rnerves:nat trans induces equivce betw maps on gpd-complns}.
\end{proof}

The following result, an ingredient of the proof of \cref{triangle of colimits}, uses the language of \cref{subsection op/lax nat trans}.

\begin{lem}\label{lax triangle}
A diagram
\[ \begin{tikzcd}
\C \arrow[bend left=50]{r}{E}[swap, pos=0.1, transform canvas={yshift=-0.4em}]{\left. \alpha \right\Downarrow} \arrow[bend right=50]{r}[swap]{F} & \D \arrow{r}{G} & \Cati
\end{tikzcd} \]
induces a morphism
\[ \begin{tikzcd}[row sep=0.5cm]
\Gr(G \circ E) \arrow{dd}[swap]{\Gr(\id_G \circ \alpha)} \arrow{rd}[swap, sloped, pos=0.25, transform canvas={yshift=-1.5em}]{\Downarrow} \\
& \Gr(G) \\
\Gr(G \circ F) \arrow{ru}
\end{tikzcd} \]
in $\Lax(\Gr(G))$.
\end{lem}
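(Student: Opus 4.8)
The plan is to assemble the asserted morphism of $\Lax(\Gr(G))$ out of its three components — the two ``legs'' down to $\Gr(G)$, the underlying functor between total spaces, and the filling $2$-cell — each extracted from the naturality of the Grothendieck construction by feeding it a suitable map in $\Cati$. For the legs I would apply the naturality of the Grothendieck construction (\cref{naturality of Gr}) to the functors $E, F : \C \ra \D$: this yields canonical equivalences $\Gr(G \circ E) \simeq E^* \Gr(G)$ and $\Gr(G \circ F) \simeq F^* \Gr(G)$ in $\coCartFib(\C)$, and in particular canonical projections $p_E : \Gr(G \circ E) \ra \Gr(G)$ and $p_F : \Gr(G \circ F) \ra \Gr(G)$, which are the two maps into $\Gr(G)$ appearing in the statement. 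The underlying functor is then obtained simply by applying the functor $\Gr : \Fun(\C,\Cati) \ra \coCartFib(\C)$ to the whiskered natural transformation $\id_G \circ \alpha : G \circ E \ra G \circ F$ (a morphism in $\Fun(\C,\Cati)$), which is by definition the map $\Gr(\id_G \circ \alpha)$.

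To produce the $2$-cell I would convert $\alpha$ into an honest functor and invoke naturality once more. Let $\tilde\alpha : [1] \times \C \ra \D$ be the functor classifying $\alpha$, so that $\tilde\alpha|_{\{0\} \times \C} = E$, $\tilde\alpha|_{\{1\} \times \C} = F$, and the edges over $[1] \times \{c\}$ recover $\alpha_c$; then $G \circ \tilde\alpha : [1] \times \C \ra \Cati$ is precisely $\id_G \circ \alpha$ regarded as an object of $\Fun([1] \times \C, \Cati) = \Fun([1], \Fun(\C,\Cati))$. Naturality applied to $\tilde\alpha$ gives $\Gr(G \circ \tilde\alpha) \simeq \tilde\alpha^* \Gr(G)$, hence a projection $\Pi : \Gr(G \circ \tilde\alpha) \ra \Gr(G)$; and the relative form of \cref{ex cocart over walking arrow} — the unstraightening over $[1]$ of a natural transformation of functors $\C \ra \Cati$ is the directed mapping cylinder over $\C$ of the induced map of Grothendieck constructions — identifies $\Gr(G \circ \tilde\alpha)$ with the directed mapping cylinder of $\Gr(\id_G \circ \alpha)$. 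That mapping cylinder contains a copy of $\Gr(G \circ E) \times [1]$, and I would define the $2$-cell to be the composite $\eta : \Gr(G \circ E) \times [1] \hookrightarrow \Gr(G \circ \tilde\alpha) \xra{\Pi} \Gr(G)$. Compatibility of naturality with the composites $\C \cong \{i\} \times \C \hookrightarrow [1] \times \C \xra{\tilde\alpha} \D$ for $i = 0, 1$ then shows that $\eta$ restricts over $0 \in [1]$ to $p_E$ and over $1 \in [1]$ to $p_F \circ \Gr(\id_G \circ \alpha)$, so that the pair $(\Gr(\id_G \circ \alpha), \eta)$ constitutes a morphism $(\Gr(G \circ E) \xra{p_E} \Gr(G)) \ra (\Gr(G \circ F) \xra{p_F} \Gr(G))$ in $\Lax(\Gr(G))$. (As a sanity check, $\eta$ is pointwise the $\pr_{\Gr(G)}$-cocartesian lift of $\alpha_c$ at an object $y \in G(E(c))$ — the expected answer.)

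The pullback-pasting and naturality bookkeeping above is routine. The one point genuinely requiring care — and the main obstacle — is reconciling the data $(\Gr(\id_G \circ \alpha), \eta)$ with the precise definition of a morphism in the lax overcategory recalled in \cref{subsection op/lax nat trans}: one must check that ``a functor of total spaces together with such a cylinder down to the base'' is exactly what a morphism of $\Lax(\Gr(G))$ unwinds to, and that the variance of every $2$-cell involved comes out as $\Lax$ rather than $\opLax$. Granting that identification, the lemma is a formal consequence of the functoriality of the Grothendieck construction on $\Fun(\C,\Cati)$ and of the naturality recalled in \cref{naturality of Gr}, together with \cref{ex cocart over walking arrow}.
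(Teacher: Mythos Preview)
Your proposal is correct and follows essentially the same route as the paper: both encode $\alpha$ as a functor $\tilde\alpha : [1] \times \C \ra \D$ (the paper calls it $H$), invoke naturality of the Grothendieck construction to obtain $\Gr(G \circ \tilde\alpha)$ together with a projection to $\Gr(G)$, identify $\Gr(G \circ \tilde\alpha)$ with the directed mapping cylinder of $\Gr(\id_G \circ \alpha)$, and then extract the required $2$-cell from the canonical inclusion $\Gr(G \circ E) \times [1] \hookra \Gr(G \circ \tilde\alpha)$. The only notable difference is organizational: where you pass directly to the mapping-cylinder inclusion, the paper first repackages the cocartesian fibration $\Gr(G \circ H) \ra [1]$ as a morphism in $\Fun([1],\Cati)$ via precomposition with a specific map $[1] \times [1] \ra [1]$, which is a slightly more explicit way of handling the coherence you flag as the ``main obstacle''.
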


\begin{proof}
We begin by encoding $\alpha$ as a functor $[1] \times \C \xra{H} \D$.  By the naturality of the Grothendieck construction, this gives rise to a diagram
\[ \begin{tikzcd}
\Gr(G \circ H) \arrow{r} \arrow{d} & \Gr(G) \arrow{d} \\
{[1] \times \C} \arrow{r}[swap]{H} \arrow{d}[swap]{\pr_{[1]}} & \D \\
{[1]}
\end{tikzcd} \]
in which both left vertical arrows are cocartesian fibrations (the upper by pullback, the lower the Grothendieck construction of the functor $[1] \xra{\const(\C)} \Cati$).  Hence, the composite left vertical arrow is a cocartesian fibration as well, namely the one classified by the map $[1] \ra \Cati$ selecting the functor
\[ \Gr(G \circ E) \xra{\Gr(\id_G \circ \alpha)} \Gr(G \circ F) . \]

Now, in order to extract this functor as a map \textit{between} cocartesian fibrations (as opposed to being contained \textit{within} a cocartesian fibration), we obtain a morphism in $\Fun([1], \Cati)$ with this map as its target by precomposing with the map $[1] \times [1] \ra [1]$ given by
\[ (i,j) \mapsto \left\{ \begin{array}{ll} 0 , & (i,j) \not= (1,1) \\ 1 , & (i,j) = (1,1) ; \end{array} \right. \]
by adjunction, this yields a functor $[1] \ra \Fun([1],\Cati)$ which, considered as a map in $\Fun([1],\Cati)$, has source $[1] \xra{\const(\Gr(G \circ E))} \Cati$ and has target selecting this same functor $\Gr(G \circ E) \xra{\Gr(\id_G \circ \alpha)} \Gr(G \circ F)$.  From here, the equivalence $\Fun([1],\Cati) \xra[\sim]{\Gr} \coCartFib([1])$ gives rise to the diagram of \cref{diagram in proof of lax triangle}, in which for clarity we include both fiber inclusions of each of these objects of $\coCartFib([1])$ as well as the induced maps between them.
\begin{figure}[h]
\[ \begin{tikzcd}[column sep=0.5cm]
& & \Gr(G \circ E) \arrow{rrrr}{\Gr(\id_G \circ \alpha)} \arrow{ld} \arrow{ddddddrr} & & & & \Gr(G \circ F) \arrow{ld} \arrow{ddddddll} \\
& {[1] \times \Gr(G\circ E)} \arrow[crossing over]{rrrr} \arrow{ddddddrr} & & & & \Gr(G \circ H) \arrow[crossing over]{rrrr} \arrow[crossing over]{ddddddll} & & & & \Gr(G) \\
\Gr(G \circ E) \arrow[crossing over]{rrrr}{\id_{\Gr(G \circ E)}} \arrow{ru} \arrow{ddddddrr} & & & & \Gr(G \circ E) \arrow{ru} \arrow[crossing over]{ddddddll} \\ \\ \\ \\
& & & & \{1\} \\
& & & {[1]} \arrow[hookleftarrow]{ru} \\
& & \{0\} \arrow[hookrightarrow]{ru}
\end{tikzcd} \]
\caption{The diagram in $\Cati$ used in the proof of \cref{lax triangle}.}
\label{diagram in proof of lax triangle}
\end{figure}
Moreover, there is a canonical natural transformation in $\Fun(\Gr(G \circ E) , [1] \times \Gr(G \circ E))$ from the inclusion of the fiber over $0 \in [1]$ to the inclusion of the fiber over $1 \in [1]$ (selected by the identity map $\id_{[1] \times \Gr(G \circ E)}$).  Taking the horizontal composite
\[ \begin{tikzcd}
\Gr(G \circ E) \arrow[bend left=50]{r}{\{0\} \times \id_{\Gr(G \circ E)}}[swap, transform canvas={yshift=-2em}]{\Downarrow} \arrow[bend right=50]{r}[swap]{\{1\} \times \id_{\Gr(G \circ E)}} & {[1] \times \Gr(G \circ E)} \arrow{r} & \Gr(G \circ H) \arrow{r} & \Gr(G)
\end{tikzcd} \]
of this natural transformation with the horizontal composite in the diagram of \cref{diagram in proof of lax triangle} then yields the desired morphism in $\Lax(\Gr(G))$.
\end{proof}

In manipulating colimits, we will also make use of the following notion (which is actually only a special case of a more general $(\infty,2)$-categorical phenomenon).

\begin{defn}
Let $\C \in \Cati$, let $F,G \in \Fun(\C,\Cati)$, and let $\alpha,\beta \in \hom_{\Fun(\C,\Cati)}(F,G)$.  A \bit{modification} from $\alpha$ to $\beta$ is a map
\[ \const([1]) \times F \xra{\mu} G \]
in $\Fun(\C,\Cati)$ whose restriction along $\const(\{0\}) \ra \const([1])$ recovers $\alpha$ and whose restriction along $\const(\{1\}) \ra \const([1])$ recovers $\beta$.
\end{defn}

The following result describes the effect of the Grothendieck construction on modifications.

\begin{prop}\label{mod gives nat trans}
Let $\C \in \Cati$, let $F,G \in \Fun(\C,\Cati)$, and let $\alpha,\beta \in \hom_{\Fun(\C,\Cati)}(F,G)$.  A modification $\mu$ from $\alpha$ to $\beta$ induces a natural transformation
\[ \begin{tikzcd}[column sep=2cm]
\Gr(F) \arrow[bend left=50]{r}{\Gr(\alpha)}[swap, pos=0.17, transform canvas={yshift=-1.2em}]{\left. \Gr(\mu) \right\Downarrow} \arrow[bend right=50]{r}[swap]{\Gr(\beta)} & \Gr(G)
\end{tikzcd} \]
in $\Cati$.
\end{prop}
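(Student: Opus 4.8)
The plan is to produce the natural transformation by applying the Grothendieck construction directly to the defining map $\mu$ of the modification and then recognizing its source. Recall that, for functors $\Phi_0,\Phi_1 : \Gr(F) \ra \Gr(G)$, a natural transformation $\Phi_0 \Rightarrow \Phi_1$ is the same datum as a functor $[1] \times \Gr(F) \ra \Gr(G)$ whose restrictions along $\{0\} \hookrightarrow [1]$ and $\{1\} \hookrightarrow [1]$ are $\Phi_0$ and $\Phi_1$. So it suffices to produce such a functor out of $[1] \times \Gr(F)$, together with identifications of its two endpoint restrictions with $\Gr(\alpha)$ and $\Gr(\beta)$.

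First I would record that $\mu$ sits in a commutative diagram
\[ \begin{tikzcd}[row sep=small]
\const(\{0\}) \times F \arrow{r} \arrow{rd}[swap]{\alpha} & \const([1]) \times F \arrow{d}{\mu} & \const(\{1\}) \times F \arrow{l} \arrow{ld}{\beta} \\
& G
\end{tikzcd} \]
in $\Fun(\C,\Cati)$, in which the two horizontal maps are induced by $\{0\},\{1\} \hookrightarrow [1]$ and the two slanted identifications use the canonical equivalences $\const(\{i\}) \times F \simeq F$; this is precisely the condition that $\mu$ restricts to $\alpha$ and to $\beta$. Since the composite $\Fun(\C,\Cati) \xra{\Gr} \coCartFib(\C) \ra (\Cati)_{/\C} \ra \Cati$ is a functor, applying it yields a commutative diagram of the same shape in $\Cati$, with $\mu,\alpha,\beta,G$ replaced by $\Gr(\mu),\Gr(\alpha),\Gr(\beta),\Gr(G)$.

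It then remains to identify the top row. By \cref{fiber prods of cocart fibns is Gr of product}, the Grothendieck construction carries products in $\Fun(\C,\Cati)$ to fiber products over $\C$, and by the naturality of the Grothendieck construction (\cref{naturality of Gr}) together with \cref{Gr of a constant functor}, the composite $\Cati \xra{\const} \Fun(\C,\Cati) \xra{\Gr} \coCartFib(\C) \ra \Cati$ is equivalent to $\C \times (-)$ compatibly with the structure maps down to $\C$. Combining these, $\Gr(\const(\D) \times F) \simeq \Gr(\const(\D)) \times_\C \Gr(F) \simeq (\C \times \D) \times_\C \Gr(F) \simeq \D \times \Gr(F)$, naturally in $\D \in \Cati$; applying this to $\{0\} \hookrightarrow [1] \hookleftarrow \{1\}$ turns the top row of our diagram into $\{0\} \times \Gr(F) \hookrightarrow [1] \times \Gr(F) \hookleftarrow \{1\} \times \Gr(F)$, with the slanted maps becoming $\Gr(\alpha)$ and $\Gr(\beta)$. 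The middle vertical map is thus a functor $[1] \times \Gr(F) \ra \Gr(G)$ restricting to $\Gr(\alpha)$ over $0$ and to $\Gr(\beta)$ over $1$, i.e.\ the desired natural transformation, which we name $\Gr(\mu)$.

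The one point that genuinely requires care — and which I would spell out — is the coherence of the chain of equivalences $\Gr(\const(\D) \times F) \simeq \D \times \Gr(F)$ with the two fiber inclusions: one must check that the equivalences supplied by \cref{fiber prods of cocart fibns is Gr of product} and \cref{Gr of a constant functor} can be chosen functorially in $\D$, so that the endpoint restrictions of the resulting functor are literally $\Gr(\alpha)$ and $\Gr(\beta)$ rather than merely agreeing with them up to some unspecified equivalence. This is a matter of tracing through the proofs of those two examples, both of which are themselves instances of the naturality of the Grothendieck construction.
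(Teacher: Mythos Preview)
Your proposal is correct and follows essentially the same route as the paper: apply $\Gr$ to the map $\mu$ and then identify the source $\Gr(\const([1]) \times F)$ with $[1] \times \Gr(F)$ via the chain $\Gr(\const([1]) \times F) \simeq \Gr(\const([1])) \times_\C \Gr(F) \simeq ([1] \times \C) \times_\C \Gr(F) \simeq [1] \times \Gr(F)$. The paper's version is terser (it does not explicitly track the endpoint restrictions or flag the coherence point you raise), but the argument is the same, and your citations of \cref{fiber prods of cocart fibns is Gr of product} and \cref{Gr of a constant functor} are exactly the ingredients the paper invokes.
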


\begin{proof}
Applying the Grothendieck construction to the modification $\mu$, we obtain a map
\[ \Gr(\const([1]) \times F) \xra{\Gr(\mu)} \Gr(G) \]
in $\coCartFib(\C)$.  But this source can be identified as
\[ \Gr(\const([1]) \times F) \simeq \Gr(\const([1])) \times_\C \Gr(F) \simeq ([1] \times \C) \times_\C \Gr(F) \simeq [1] \times \Gr(F) , \]
where the first equivalence follows from the fact that $\Fun(\C,\Cati) \xra[\sim]{\Gr} \coCartFib(\C)$ is an equivalence (so commutes with products) and the fact that the forgetful functor $\coCartFib(\C) \ra (\Cati)_{/\C}$ is a right adjoint and hence commutes with products.  So, this becomes a map
\[ [1] \times \Gr(F) \xra{\Gr(\mu)} \Gr(G) , \]
as desired.
\end{proof}

\section{Op/lax natural transformations and the global co/limit functor}\label{section op/lax}

In ordinary category theory, functors with the same source and target can be related by natural transformations between them.  When the target (and possibly the source) is a $2$-category, this notion can be relaxed in two different ways, yielding notions of \textit{lax} and \textit{oplax} natural transformations.

We will be interested in this phenomenon in the $\infty$-categorical context.  However, we will concern ourselves exclusively with the special case in which the source is only an $\infty$-category and the target is $\Cati$, considered as an $(\infty,2)$-category via the closure of its symmetric monoidal structure $(\Cati,\times)$.  In this case, the Grothendieck construction allows us to easily and concisely define such transformations without reference to an ambient theory of $(\infty,2)$-categories: heuristically speaking, it ``reduces category level by one''.

In \cref{subsection op/lax nat trans} we define op/lax natural transformations via the Grothendieck construction, and then in \cref{subsection global co/limit} we apply this framework to study the ($(\infty,2)$-categorical) functoriality of colimits in an arbitrary but fixed $\infty$-category.

\subsection{Op/lax natural transformations}\label{subsection op/lax nat trans}

We begin with the following definition.

\begin{defn}\label{define op/lax nat trans betw fctrs to Cati}
Suppose we are given a pair of functors $F,G : \C \rra \Cati$.  A \bit{lax natural transformation} from $F$ to $G$, denoted $F \laxra G$ for short, is a map
\[ \begin{tikzcd}
\Gr(F) \arrow{rd} \arrow{rr} & & \Gr(G) \arrow{ld} \\
& \C
\end{tikzcd} \]
in $(\Cati)_{/\C}$.  Meanwhile, an \bit{oplax natural transformation} from $F$ to $G$, denoted $F \oplaxra G$ for short, is a map
\[ \begin{tikzcd}
\Grop(F) \arrow{rd} \arrow{rr} & & \Grop(G) \arrow{ld} \\
& \C^{op}
\end{tikzcd} \]
in $(\Cati)_{/\C^{op}}$.
\end{defn}

To provide some intuition, we illustrate \cref{define op/lax nat trans betw fctrs to Cati} in the simplest nontrivial case.

\begin{ex}\label{ex lax from 1}
Recall from \cref{ex cocart over walking arrow} that we can think of the equivalence
\[ \Fun([1],\Cati) \xra[\sim]{\Gr} \coCartFib([1]) \]
as a sort of ``directed mapping cylinder'' construction.  Hence, if $[1] \xra{F} \Cati$ selects a functor $\C_0 \xra{f} \C_1$ and $[1] \xra{G} \Cati$ selects a functor $\D_0 \xra{g} \D_1$, then a lax natural transformation $\alpha : F \laxra G$ is equivalent to a square
\[ \begin{tikzcd}
\C_0 \arrow{r}{f}[swap, transform canvas={yshift=-1.2em}]{\RUarrow} \arrow{d}[swap]{\alpha_0} & \C_1 \arrow{d}{\alpha_1} \\
\D_0 \arrow{r}[swap]{g} & \D_1
\end{tikzcd} \]
in $\Cati$, i.e.\! the data of
\begin{itemize}
\item a functor $\C_0 \xra{\alpha_0} \D_0$,
\item a functor $\C_1 \xra{\alpha_1} \D_1$, and
\item a natural transformation $g \circ \alpha_0 \ra \alpha_1 \circ f$ in $\Fun(\C_0,\D_1)$.
\end{itemize}
A similar analysis shows that an oplax natural transformation $\alpha : F \oplaxra G$ simply reverses the direction of the natural transformation (so that it runs down and to the left, instead of up and to the right).
\end{ex}

We will be interested in the special case of \cref{ex lax from 1} in which $G = \const(\C)$ (so that $g = \id_\C$) for some chosen $\infty$-category $\C$.  For a fixed such choice, these can be organized in the following way.

\begin{defn}\label{define op/lax overcat}
For any $\C \in \Cati$, we define the \bit{lax overcategory} of $\C$ to be
\[ \Lax(\C) = \Grop \left( (\Cati)^{op} \xra{\Fun(-,\C)} \Cati \right) . \]
Thus, the objects of $\Lax(\C)$ are functors with target $\C$, and a morphism from $\D \xra{F} \C$ to $\E \xra{G} \C$ in $\Lax(\C)$ is given by a triangle
\[ \begin{tikzcd}
\D \arrow{rr}{H}[swap, transform canvas={yshift=-0.8em}]{\rotatebox[origin=c]{20}{$\Rightarrow$}} \arrow{rd}[swap]{F} & & \E \arrow{ld}{G} \\
& \C
\end{tikzcd} \]
in $\Cati$.  We write
\[ \Lax(\C) \xra{s_{\Lax(\C)}} \Cati \]
for the canonical projection map, and refer to it as the \bit{source projection}; this is by definition a cartesian fibration, with fiber over $\D \in \Cati$ given by $\Fun(\D,\C)$.

Similarly, we define the \bit{oplax overcategory} of $\C$ to be
\[ \opLax(\C) = \Grop \left( (\Cati)^{op} \xra{\Fun((-)^{op},\C^{op})} \Cati \right) . \]
Thus, the objects of $\opLax(\C)$ can be canonically identified (via the involution $(-)^{op} : \Cati \xra{\sim} \Cati$) with functors with target $\C$, and a morphism from (the object identified with) $\D \xra{F} \C$ to (the object identified with) $\E \xra{G} \C$ in $\opLax(\C)$ can be canonically identified with a triangle
\[ \begin{tikzcd}
\D \arrow{rr}{H}[swap, transform canvas={yshift=-0.8em}]{\rotatebox[origin=c]{200}{$\Rightarrow$}} \arrow{rd}[swap]{F} & & \E \arrow{ld}{G} \\
& \C
\end{tikzcd} \]
in $\Cati$.  We write
\[ \opLax(\C) \xra{s_{\opLax(\C)}} \Cati \]
for the canonical projection map, and also refer to it as the \bit{source projection}; this is again by definition a cartesian fibration, with fiber over $\D \in \Cati$ given by $\Fun(\D^{op},\C^{op}) \simeq \Fun(\D,\C)^{op}$.
\end{defn}

\begin{rem}
A map $\C_1 \ra \C_2$ induces a natural transformation $\Fun(-,\C_1) \ra \Fun(-,\C_2)$ in $\Fun((\Cati)^{op},\Cati)$, which in turn gives rise to a commutative triangle
\[ \begin{tikzcd}
\Lax(\C_1) \arrow{rr} \arrow{rd}[swap]{s_{\Lax(\C_1)}} & & \Lax(\C_2) \arrow{ld}{s_{\Lax(\C_2)}} \\
& \Cati
\end{tikzcd} \]
in $\Cati$; altogether, we obtain a functor
\[ \Cati \xra{\Lax(-)} \CartFib(\Cati) . \]
Similarly, we obtain a functor
\[ \Cati \xra{\opLax(-)} \CartFib(\Cati) . \]
\end{rem}

\begin{rem}\label{oplax of C is lax of Cop}
Expanding out the definition, we also can write
\[ \opLax(\C) = \Grop \left( (\Cati)^{op} \xra{((-)^{op})^{op}} (\Cati)^{op} \xra{\Fun(-,\C^{op})} \Cati \right) \]
(where the first functor is obtained by applying the involution $\Cati \xra{(-)^{op}} \Cati$ to the morphism $\Cati \xra{(-)^{op}} \Cati$).  By the naturality of the Grothendieck construction, it follows that we have a pullback square
\[ \begin{tikzcd}
\opLax(\C) \arrow{r}{\sim} \arrow{d}[swap]{s_{\opLax(\C)}} & \Lax(\C^{op}) \arrow{d}{s_{\Lax(\C^{op})}} \\
\Cati \arrow{r}{\sim}[swap]{(-)^{op}} & \Cati
\end{tikzcd} \]
in $\Cati$.
\end{rem}

\begin{rem}
As an alternative to the construction of \cref{define op/lax overcat}, both $\Lax(\C)$ and $\opLax(\C)$ are simultaneously encoded in the ``(op)lax square'' of $\Cati$ as constructed in \cite[\sec 5]{JFS-op/lax}.  While this alternative construction is both clean and aesthetically pleasing, we have chosen our own exposition in pursuit of the meta-goal of this paper (namely, to connect as many different concepts as possible to the Grothendieck construction).
\end{rem}

\subsection{The global co/limit functor}\label{subsection global co/limit}

Suppose we are given an arbitrary cocomplete $\infty$-category $\C$.  Then, the operation of taking colimits in $\C$ should be functorial in two different senses.
\begin{itemize}
\item On the one hand, colimits are functorial for natural transformations.  For instance, a natural transformation
\[ \begin{tikzcd}
\D \arrow[bend left=50]{r}{F}[swap, transform canvas={yshift=-0.8em}]{\Downarrow} \arrow[bend right=50]{r}[swap]{G} & \C
\end{tikzcd} \]
induces a canonical map $\colim_\D(F) \ra \colim_\D(G)$ in $\C$: a colimiting cocone $\D^\rightcone \ra \C$ extending $G$ composes to give an arbitrary cocone $\D^\rightcone \ra \C$ extending $F$, and then the desired map arises from the definition of a colimit as an initial cocone extending the given diagram.  Thus, taking colimits should give rise to a functor
\[ \Fun(\D,\C) \xra{\colim} \C . \]
\item On the other hand, colimits are also functorial for commutative diagrams of $\infty$-categories over $\C$.  For instance, a commutative diagram
\[ \begin{tikzcd}
\D \arrow{rr} \arrow{rd}[swap]{F} & & \E \arrow{ld}{G} \\
& \C
\end{tikzcd} \]
in $\Cati$ induces a canonical map $\colim_\D(F) \ra \colim_\E(G)$ in $\C$: a colimiting cocone $\E^\rightcone \ra \C$ extending $G$ composes to give an arbitrary cocone $\D^\rightcone \ra \E^\rightcone \ra \C$ extending $F$, and then the desired map once again arises from the definition of a colimit as an initial cocone extending the given diagram.  Thus, taking colimits should also give rise to a functor
\[ (\Cati)_{/\C} \xra{\colim} \C . \]
\end{itemize}
In fact, we have already seen a construction in \cref{define op/lax overcat} which unifies these two situations: via the cartesian fibration $\Lax(\C) \xra{s_{\Lax(\C)}} \Cati$, an arbitrary morphism
\[ \begin{tikzcd}
\D \arrow{rr}{H}[swap, transform canvas={yshift=-0.8em}]{\rotatebox[origin=c]{20}{$\Rightarrow$}} \arrow{rd}[swap]{F} & & \E \arrow{ld}{G} \\
& \C
\end{tikzcd} \]
in $\Lax(\C)$ gives rise to a unique diagram
\[ \begin{tikzcd}[row sep=1.5cm, column sep=1.5cm]
\D \arrow{rr}{H} \arrow[bend left]{rd}[pos=0.6]{G \circ H} \arrow[bend right]{rd}[sloped, pos=0.55, transform canvas={yshift=1.2em}]{\Uparrow}[swap]{F} & & \E \arrow{dl}{G} \\
& \C
\end{tikzcd} \]
in which the inner (commutative) triangle determines a cartesian arrow and the indicated natural transformation is a fiber morphism (lying over the object $\D \in \Cati$).  Thus, one might expect that it is possible to unify the above two senses in which colimits are functorial by means of a single functor
\[ \Lax(\C) \xra{\colim} \C . \]

The purpose of this subsection is to construct precisely such a \textit{global colimit functor}.  In fact, we will achieve this (as \cref{global colimit}) for an arbitrary $\infty$-category $\C$ (i.e.\! when $\C$ is not necessarily cocomplete), although in this more general setting we will of course need to restrict to the full subcategory of $\Lax(\C)$ spanned by those diagrams in $\C$ which admit a colimit (see \cref{notn subcat of lax with colim}).

\begin{rem}
A similar analysis suggests that when $\C$ is complete, there should exist a corresponding \textit{global limit functor} running
\[ \opLax(\C)^{op} \xra{\lim} \C . \]
Indeed, this can be obtained from \cref{global colimit} simply by taking opposites: if $\C$ is complete then $\C^{op}$ is cocomplete, and combining the resulting global colimit functor
\[ \Lax(\C^{op}) \xra{\colim} \C^{op} \]
with the equivalence of \cref{oplax of C is lax of Cop} and taking opposites yields a composite
\[ \opLax(\C)^{op} \xra{\sim} \Lax(\C^{op})^{op} \ra \C \]
which one can easily verify is the desired global limit functor.  (And of course, this equally well generalizes to the case that $\C$ only admits certain limits.)  We will therefore henceforth focus our attention only on the global \textit{colimit} functor.
\end{rem}

The first step in constructing the global colimit functor is to make the following reidentification of the nerve of the $\infty$-category $\Cati$ of $\infty$-categories.

\begin{lem}\label{nerve of Cati}
There is a canonical identification of $\Nervei(\Cati)_\bullet \in \CSS \subset s\S$ with the composite
\[ \bD^{op} \hookra (\Cati)^{op} \xra{\coCartFib(-)^{\simeq}} \S . \]
\end{lem}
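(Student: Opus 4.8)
The plan is to identify the two simplicial objects $\Nervei(\Cati)_\bullet$ and $\coCartFib(-)^{\simeq} \circ (\iota)^{op}$ levelwise, compatibly with the simplicial structure maps, using the Grothendieck construction. The key observation is that for any $\infty$-category $\mathcal{D}$, the space of functors $\hom_{\Cati}(\mathcal{D}, \Cati)$ is computed by $\Fun(\mathcal{D},\Cati)^{\simeq}$, and under the Grothendieck construction equivalence $\Fun(\mathcal{D},\Cati) \xra{\sim} \coCartFib(\mathcal{D})$ of \cref{define grothendieck construction} this is canonically $\coCartFib(\mathcal{D})^{\simeq}$. Applying this with $\mathcal{D} = [n]$ for each $n \geq 0$ gives, for each object of $\bD$, a canonical equivalence
\[ \Nervei(\Cati)_n = \hom_{\Cati}([n],\Cati) \simeq \Fun([n],\Cati)^{\simeq} \xra[\sim]{\Gr} \coCartFib([n])^{\simeq} , \]
which is precisely the value at $[n] \in \bD^{op} \hookra (\Cati)^{op}$ of the composite $\coCartFib(-)^{\simeq} : (\Cati)^{op} \ra \S$.

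The main work is then to promote this levelwise equivalence to an equivalence of functors $\bD^{op} \ra \S$, i.e.\ to check compatibility with the cosimplicial structure maps of $\bD$. For a map $[m] \xra{\varphi} [n]$ in $\bD$, the induced map on $\Nervei(\Cati)_\bullet$ is precomposition $(-) \circ \varphi : \hom_{\Cati}([n],\Cati) \ra \hom_{\Cati}([m],\Cati)$, while the induced map on $\coCartFib(-)^{\simeq}$ is pullback $\varphi^* : \coCartFib([n])^{\simeq} \ra \coCartFib([m])^{\simeq}$. The compatibility of these two — that $\Gr$ intertwines precomposition of $\Cati$-valued functors with pullback of cocartesian fibrations — is exactly the \emph{naturality of the Grothendieck construction} (\cref{naturality of Gr}): the functor $(\Cati)^{op} \ra \Fun([1],\Cati)$ there recorded has, over $0 \in [1]$, the assignment $\C \mapsto \Fun(\C,\Cati)$ with precomposition as functoriality, and over $1 \in [1]$, the assignment $\C \mapsto \coCartFib(\C)$ with pullback as functoriality, and the two are linked by $\Gr$. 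Restricting this functor along $\bD^{op} \hookra (\Cati)^{op}$ and passing to maximal subgroupoids (which is functorial) yields precisely an equivalence in $\Fun(\bD^{op},\S)$ between $\Nervei(\Cati)_\bullet$ and $\coCartFib(-)^{\simeq} \circ (\iota)^{op}$.

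I expect the main obstacle to be purely bookkeeping: making sure that the natural equivalence $\hom_{\Cati}(\mathcal{D},\Cati) \simeq \Fun(\mathcal{D},\Cati)^{\simeq}$ is the "right" one and that it is genuinely natural in $\mathcal{D}$ (this is the standard identification of mapping spaces in an $\infty$-category of functors with cores of functor $\infty$-categories, but one should cite it cleanly — e.g.\ via $\hom_{\Cati}(\mathcal{D},\mathcal{E}) \simeq \Fun(\mathcal{D},\mathcal{E})^{\simeq}$), and that restricting the naturality functor of \cref{naturality of Gr} along the (non-full) inclusion $\bD \hookra \Cati$ and then applying $(-)^{\simeq}$ indeed produces a \emph{functor} $\bD^{op} \ra \S$ rather than merely a levelwise family. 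Once the naturality is in hand, the identification as an object of $\CSS \subset s\S$ is automatic, since $\Nervei(\Cati)_\bullet$ is by definition a complete Segal space and we have exhibited it as equivalent to the claimed composite.
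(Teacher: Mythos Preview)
Your proposal is correct and follows essentially the same approach as the paper's own proof: identify $\Nervei(\Cati)_n \simeq \coCartFib([n])^\simeq$ levelwise via the Grothendieck construction equivalence $\Fun([n],\Cati) \xra[\sim]{\Gr} \coCartFib([n])$ and passage to maximal subgroupoids, and then invoke the naturality of the Grothendieck construction (\cref{naturality of Gr}) to assemble these into an equivalence of simplicial spaces. The paper's argument is simply a terser version of exactly what you wrote.
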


\begin{proof}
For any $[n] \in \bD$, the Grothendieck construction provides an equivalence
\[ \Fun([n],\Cati) \xra[\sim]{\Gr} \coCartFib([n]) ; \]
passing to maximal subgroupoids, we obtain an equivalence
\[ \Nervei(\Cati)_n = \hom_\Cati([n],\Cati) \simeq \coCartFib([n])^\simeq . \]
The claim then follows from the naturality of the Grothendieck construction.
\end{proof}

Next, we would like to correspondingly identify the nerve of the lax overcategory of $\C$ (along with that of its source projection).  For this, observe (recalling \cref{ex lax from 1}) that the datum of a morphism in $\Lax(\C)$ is specified by the pair of
\begin{itemizesmall}
\item its image $[1] \xra{H} \Cati$ under the source projection, which by \cref{nerve of Cati} is equivalent to specifying a point
\[ \left( \Gr(H) \xra{\pr_{\Gr(H)}} [1] \right) \in \coCartFib([1])^\simeq \in \S , \]
along with
\item a map $\Gr(H) \ra \C$ from (the underlying $\infty$-category of) the ``directed mapping cylinder'' $\Gr(H)$ into our fixed target $\infty$-category $\C$.
\end{itemizesmall}
Moreover, a similar observation holds when we replace the object $[1] \in \bD$ by an arbitrary object $[n] \in \bD$.  Altogether, this identifies the canonical maps
\[ \Nervei(\Lax(\C))_n \ra \Nervei(\Cati)_n \]
(obtained by applying the functor
\[ \Nervei(-)_n \simeq \hom_\Cati([n],-) \in \Fun( \Cati , \S) \]
to the source projection map $\Lax(\C) \xra{s_{\Lax(\C)}} \Cati$) as being the cocartesian fibration associated to a certain map
\[ \Nervei(\Cati)_n \ra \S . \]
This motivates our desired identification (\cref{nerve of Lax}), but in order to state it precisely we first introduce the following notation.

\begin{notn}\label{notation UC and UC-dagger}
To ease notation, for any $\infty$-category $\C$ we denote by $\forget_\C$ any sub-composite of the composite
\[ \coCartFib(\C)^\simeq \hookra \coCartFib(\C)  \xra{\forget_{\coCartFib(\C)}} (\Cati)_{/\C} \ra \Cati \]
of the inclusion of the maximal subgroupoid with the two evident forgetful functors.  Moreover, we denote by $\forget_\C^\dagger$ any sub-composite beginning at $\coCartFib(\C)^\simeq$ of the composite
\[ \coCartFib(\C)^\simeq \xra{\sim} \left( \coCartFib(\C)^\simeq \right)^{op} \hookra \left( \coCartFib(\C) \right)^{op}  \xra{\forget_{\coCartFib(\C)}^{op}} \left( (\Cati)_{/\C} \right)^{op} \ra \left( \Cati \right)^{op} \]
of the canonical equivalence followed by the opposite of the above composite.  We also use this same notation when restricting to the subcategory $\LFib(\C) \subset \coCartFib(\C)$ or to its maximal subgroupoid.
\end{notn}

\begin{lem}\label{nerve of Lax}
Fix any $\C \in \Cati$.
\begin{enumerate}
\item\label{level of nerve of Lax}
For any $n \geq 0$, there is a canonical equivalence
\[ \Nervei(\Lax(\C))_n \simeq \forget_{\coCartFib([n])^\simeq} \left( \Gr \left( \coCartFib([n])^\simeq \xra{\forget_{[n]}^\dagger(-)} (\Cati)^{op} \xra{\hom_\Cati(-,\C)} \S \right) \right) \]
in $\S$.
\item\label{all of nerve of Lax}
The equivalences of part \ref{level of nerve of Lax} assemble into a canonical identification of $\Nervei(\Lax(\C))_\bullet \in \CSS \subset s\S$ with the composite
\[ \bD^{op} \hookra (\Cati)^{op} \xra{\forget_{\coCartFib(-)^\simeq} \left( \Gr \left(
\coCartFib(-)^\simeq \xra{\forget_{(-)}^\dagger(=)} (\Cati)^{op} \xra{\hom_\Cati(=,\C)} \S
\right) \right)} \S . \]
\end{enumerate}
\end{lem}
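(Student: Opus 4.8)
The plan is to prove part (1) by straightening the source projection over the space $\Nervei(\Cati)_n$, and then to obtain part (2) from the fact that every ingredient of that argument is natural in $[n]$.

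For part (1), recall from \cref{nerve of Cati} that $\Nervei(\Cati)_n \simeq \coCartFib([n])^\simeq$, a point of which is a cocartesian fibration $\Gr(H)\to[n]$ corresponding to a functor $H:[n]\to\Cati$. Applying $\hom_\Cati([n],-):\Cati\to\S$ to the source projection $\Lax(\C)\xra{s_{\Lax(\C)}}\Cati$ yields a map of spaces $\Nervei(\Lax(\C))_n\to\Nervei(\Cati)_n\simeq\coCartFib([n])^\simeq$; being a map of spaces this is a left fibration, so straightening exhibits $\Nervei(\Lax(\C))_n$ as the total space $\forget_{\coCartFib([n])^\simeq}(\Gr(\chi))$ of the Grothendieck construction of a functor $\chi:\coCartFib([n])^\simeq\to\S$, and it remains only to identify $\chi$. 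Its value on the point $\Gr(H)\to[n]$ is the homotopy fiber of the above map over that point, which tautologically is the space of lifts of $H$ along $s_{\Lax(\C)}$, i.e.\ the space of sections of the pulled-back cartesian fibration $H^*\Lax(\C)\to[n]$. Since $s_{\Lax(\C)}$ is the cartesian fibration classified by $\Fun(-,\C):(\Cati)^{op}\to\Cati$, the naturality of the Grothendieck construction identifies $H^*\Lax(\C)\to[n]$ with $\Grop(\Fun(H(-),\C))\to[n]$ for the functor $\Fun(H(-),\C):[n]^{op}\to\Cati$. Combining the lax-colimit presentation $\Gr(H)\simeq\int^{i\in[n]}[n]_{i/}\times H(i)$ of \cite[Theorem 7.4]{GHN} with the identification of $\infty$-categories of sections of Grothendieck constructions with lax limits, \cite[Proposition 7.1]{GHN} (or, more concretely, by directly unwinding the co/cartesian-fibration data as in \cref{ex cocart over walking arrow}), then produces a natural equivalence
\[ \Gamma_{[n]}\bigl(\Grop(\Fun(H(-),\C))\bigr)^\simeq\;\simeq\;\Fun(\Gr(H),\C)^\simeq\;=\;\hom_\Cati(\Gr(H),\C) . \]
Thus $\chi$ sends $(\Gr(H)\to[n])$ to $\hom_\Cati(\Gr(H),\C)$, which is exactly the composite $\coCartFib([n])^\simeq\xra{\forget_{[n]}^\dagger}(\Cati)^{op}\xra{\hom_\Cati(-,\C)}\S$; here the dagger records the bookkeeping that ``take the total space of a cocartesian fibration'', in order to be fed into $\hom_\Cati(-,\C)$, must be regarded as valued in $(\Cati)^{op}$, via the canonical self-duality of the groupoid $\coCartFib([n])^\simeq$. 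Substituting this back into the straightening description gives the formula of part (1).

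For part (2), observe that each step above is natural in $[n]$, indeed in all of $(\Cati)^{op}$: the map $\hom_\Cati(-,s_{\Lax(\C)})$ and its straightening are manifestly functorial; the identification $\Nervei(\Cati)_\bullet\simeq\coCartFib(-)^\simeq$ is the content of \cref{nerve of Cati}; and $\forget_{(-)}^\dagger$, the Grothendieck construction, and the forgetful functor $\forget$ all inherit the naturality of the Grothendieck construction (\cref{naturality of Gr}), while $\hom_\Cati(-,\C)$ is of course functorial. Hence the equivalences of part (1) — once their naturality in both $H$ and $[n]$ is verified — assemble, upon restriction along $\bD^{op}\hookra(\Cati)^{op}$, into an equivalence of functors $\bD^{op}\to\S$ between $\Nervei(\Lax(\C))_\bullet$ and the composite displayed in the statement of the lemma; that the latter lies in $\CSS\subset s\S$ is automatic, since $\Nervei(\Lax(\C))_\bullet$ is the nerve of an $\infty$-category.

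I expect the main obstacle to be the fiberwise identification $\hom_\Cati(\Gr(H),\C)\simeq\Gamma_{[n]}(\Grop(\Fun(H(-),\C)))^\simeq$, and above all its \emph{naturality} — simultaneously in $H$ (which is what upgrades it from a pointwise statement to an identification of functors out of $\coCartFib([n])^\simeq$) and in $[n]$ (which is what lets the pointwise equivalences of part (1) glue into an equivalence of simplicial spaces, despite the square relating $\hom_\Cati([m],\Lax(\C))$ and $\hom_\Cati([n],\Lax(\C))$ over the corresponding square for $\Cati$ not being cartesian). A secondary nuisance is keeping careful track of the various opposites, so that ``total space of a cocartesian fibration'' genuinely assembles into the functor $\forget_{[n]}^\dagger$ of \cref{notation UC and UC-dagger}. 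Once these are pinned down, the remaining assembly is a formal consequence of the naturality of the Grothendieck construction.
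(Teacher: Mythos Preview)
Your proof is correct and follows essentially the same route as the paper's: identify $\Nervei(\Lax(\C))_n$ as lying over $\Nervei(\Cati)_n \simeq \coCartFib([n])^\simeq$ via the source projection, compute the fiber over a point $(\Gr(H)\to[n])$ as $\hom_\Cati(\Gr(H),\C)$, and then invoke the naturality of the Grothendieck construction for part~(2). The paper's proof is far terser---it simply asserts that part~(1) ``follows directly from \cref{define op/lax overcat}'' and that part~(2) ``follows from the naturality of the Grothendieck construction''---with the actual content relegated to the motivational paragraph immediately preceding the lemma, which already states (for $[1]$, and then ``similarly'' for $[n]$) that a map $[n]\to\Lax(\C)$ is precisely the datum of a functor $H:[n]\to\Cati$ together with a map $\Gr(H)\to\C$.

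The one genuine difference is that you route the fiberwise identification $\Gamma_{[n]}\bigl(\Grop(\Fun(H(-),\C))\bigr)^\simeq \simeq \hom_\Cati(\Gr(H),\C)$ through the lax colimit/lax limit machinery of \cite[Theorem~7.4 and Proposition~7.1]{GHN}, whereas the paper treats this as an immediate unwinding of \cref{define op/lax overcat} (in the spirit of \cref{ex lax from 1}, which you also mention parenthetically). Your detour through GHN is not wrong, and it has the virtue of making the required naturality in $H$ transparent; but it is heavier than what the paper intends, and your own caveat that the naturality of this identification is ``the main obstacle'' is somewhat overstated---once one reads the equivalence off directly from the definition of $\Lax(\C)$ as a contravariant Grothendieck construction (so that maps into it over $H$ are \emph{by definition} sections of the pulled-back cartesian fibration, which in turn are by inspection maps out of $\Gr(H)$), the naturality is built in.
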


\begin{proof}
Part \ref{level of nerve of Lax} follows directly from \cref{define op/lax overcat}, and part \ref{all of nerve of Lax} follows from the naturality of the Grothendieck construction.
\end{proof}

\begin{notn}\label{notn subcat of lax with colim}
We denote by $\Lax(\C)^{\colim} \subset \Lax(\C)$ the full subcategory on those functors $\D \xra{F} \C$ which admit a colimit in $\C$.
\end{notn}

We can now give the main result of this section, whose output we refer to as the \bit{global colimit functor} for $\C$.


\begin{prop}\label{global colimit}
For any $\C \in \Cati$, there is a functor
\[ \Lax(\C)^{\colim} \xra{\colim} \C \]
which takes an object $(\D \xra{F} \C) \in \Lax(\C)^{\colim}$ to $\colim_\D(F) \in \C$.  Moreover, this functor
\begin{itemize}
\item restricts to the usual colimit functor on each fiber
\[ \Fun(\D,\C)^{\colim} = \Fun(\D,\C) \cap \Lax(\C)^{\colim} , \]
and
\item takes a commutative diagram
\[ \begin{tikzcd}
\D \arrow{rr} \arrow{rd}[swap]{F} & & \E \arrow{ld}{G} \\
& \C
\end{tikzcd} \]
(considered as a cartesian morphism in $\Lax(\C)^{\colim}$) to the canonical induced map
\[ \colim_\D F \ra \colim_\E G \]
in $\C$.
\end{itemize}
\end{prop}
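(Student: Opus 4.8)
\emph{Plan.} The plan is to construct the global colimit functor as a map of complete Segal spaces $\Nervei(\Lax(\C)^{\colim})_\bullet \to \Nervei(\C)_\bullet$, which by the complete Segal space model for $\infty$-categories is the same datum as a functor $\Lax(\C)^{\colim} \to \C$. Since $\Lax(\C)^{\colim} \subseteq \Lax(\C)$ is full (\cref{notn subcat of lax with colim}), its nerve is levelwise the subspace of $\Nervei(\Lax(\C))_\bullet$ on those $n$-simplices whose every vertex lies in $\Lax(\C)^{\colim}$, so it suffices to produce such a map out of this sub-complete-Segal-space. Fixing $[n] \in \bD$, I would combine \cref{nerve of Cati} (identifying $\Nervei(\Cati)_n \simeq \coCartFib([n])^\simeq$) with the levelwise part of \cref{nerve of Lax} to present $\Nervei(\Lax(\C))_n$ as the space underlying the Grothendieck construction of the functor
\[ \Phi_n \colon \coCartFib([n])^\simeq \xra{\forget_{[n]}^\dagger} (\Cati)^{op} \xra{\hom_\Cati(-,\C)} \S , \]
which sends a cocartesian fibration $\Gr(H) \xra{\pr} [n]$ to the space $\hom_\Cati(\Gr(H),\C)$ of $\C$-valued diagrams on its total space. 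The $i$-th vertex of the $n$-simplex corresponding to a pair $\left( \Gr(H) \xra{\pr} [n] , \ \Gr(H) \xra{\phi} \C \right)$ is the object $\left( H(i) \xra{\phi|_{H(i)}} \C \right)$, where $H(i)$ is the fiber of $\pr$ over $i$ (\cref{def fib}); so $\Nervei(\Lax(\C)^{\colim})_n$ is carved out by the sub-functor $\Phi_n^{\colim} \subseteq \Phi_n$ of those $\phi$ for which $\phi|_{H(i)}$ admits a colimit in $\C$ for every $i$.

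The map at level $n$ is then ``take fiberwise colimits'': I would exhibit a natural transformation $\Phi_n^{\colim} \Rightarrow \const(\hom_\Cati([n],\C))$ of functors $\coCartFib([n])^\simeq \to \S$ whose component at $\Gr(H) \xra{\pr} [n]$ is the map on maximal subgroupoids induced by the partial left Kan extension functor $\pr_!$ along $\pr$. Since $\pr$ is a cocartesian fibration, the pointwise formula computes $\pr_!(\phi)$ as the functor $[n] \to \C$ with value $\colim_{H(i)}(\phi|_{H(i)})$ at $i$ and with the canonical comparison map at each arrow $i \to j$ of $[n]$ -- in particular $\pr_!(\phi)$ is defined precisely on the locus $\Phi_n^{\colim}$. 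Applying the Grothendieck construction to this natural transformation, composing with the projection off $\Gr(\const(\hom_\Cati([n],\C))) \simeq \coCartFib([n])^\simeq \times \hom_\Cati([n],\C)$, and forgetting down to spaces, I obtain a map $\Nervei(\Lax(\C)^{\colim})_n \to \hom_\Cati([n],\C) = \Nervei(\C)_n$. The naturality of the Grothendieck construction, together with the global part of \cref{nerve of Lax}, reduces the assembly of these maps over $\bD$ to the functoriality in $[n]$ of the construction $\pr \mapsto \pr_!$, and the resulting map of complete Segal spaces is the desired functor.

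The three stated properties would be read off by restricting to low simplicial degree. At $n = 0$, $H$ selects a single $\infty$-category $\D$, $\Gr(H) \simeq \D$, and $\pr$ is the unique map $\D \to \pt_\Cati$, so $\pr_!$ is the absolute colimit and the functor sends $(\D \xra{F} \C)$ to $\colim_\D F$. Restricting further to the fiber of $s_{\Lax(\C)}$ over a fixed $\D$ -- that is, to simplices whose underlying $[n] \to \Cati$ is $\const(\D)$ -- identifies $\pr$ with $[n] \times \D \to [n]$ via \cref{Gr of a constant functor}, whose $\pr_!$ is the fiberwise application of $\colim_\D$; this says exactly that the functor restricts on this fiber to $\Fun(\D,\C)^{\colim} \xra{\colim} \C$. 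Finally, a cartesian morphism of $\Lax(\C)^{\colim}$ from $(\D \xra{F} \C)$ to $(\E \xra{G} \C)$ is, since $s_{\Lax(\C)}$ is the cartesian fibration classified by $\Fun(-,\C)$, a commutative triangle with $F = G \circ f$ for some $\D \xra{f} \E$; the corresponding $1$-simplex has $\Gr(H)$ the directed mapping cylinder of $f$ (\cref{ex cocart over walking arrow}) equipped with its tautological cocone, and $\pr_!$ of this is exactly the canonical arrow $\colim_\D F \to \colim_\E G$.

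The hard part will be the coherence buried in the second paragraph: promoting ``take fiberwise colimits'' from an on-objects recipe to an honest natural transformation, simultaneously over $\coCartFib([n])^\simeq$ and over $\bD$. The cleanest route I see is to observe that each $\pr_!$ is the partial left adjoint to the restriction functor $\pr^* \colon \Fun([n],\C) \to \Fun(\Gr(H),\C)$, that the family of functors $\pr^*$ is visibly natural -- it is $\hom_\Cati(-,\C)$ applied to the natural family of projections $\pr_{\Gr(H)}$ supplied by the naturality of the Grothendieck construction -- and that passing to partial left adjoints within such a natural family is itself functorial on the locus where those partial left adjoints exist. This perspective also furnishes a sanity check: the global colimit functor is nothing but the partial left adjoint to the fiber inclusion $\C \simeq \Fun(\pt_\Cati, \C) \hookra \Lax(\C)$ of the fiber of $s_{\Lax(\C)}$ over $\pt_\Cati$.
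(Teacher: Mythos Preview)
Your plan is sound and lands on the same core idea as the paper---compute the global colimit as a fiberwise left Kan extension along the cocartesian projection $\pr \colon \Gr(H) \to [n]$---but the paper packages the coherence differently, and the difference is worth noting.

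Rather than assembling the individual $\pr_!$'s into a natural family by appealing to ``functoriality of partial left adjoints'' (the step you correctly flag as the hard one), the paper builds an auxiliary $\infty$-category $\Lax(\C)'$ whose $n$-simplices are pairs of a cocartesian fibration $\Gr(H) \to [n]$ and a functor out of the relative join $\Gr(H) \diamond_{[n]} [n]$ which is a left Kan extension along the full inclusion $\Gr(H) \hookrightarrow \Gr(H) \diamond_{[n]} [n]$. The two inclusions $\Gr(H) \hookrightarrow \Gr(H) \diamond_{[n]} [n] \hookleftarrow [n]$ then give a span
\[
\Lax(\C) \longleftarrow \Lax(\C)' \longrightarrow \C \times \Cati ,
\]
and Proposition T.4.3.2.15 (contractibility of the space of left Kan extensions) shows the left leg is an equivalence onto $\Lax(\C)^{\colim}$; the colimit functor is the composite through the right leg (invoking T.4.3.3.10 for the identification). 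So all of the coherence is absorbed into the relative join construction together with a single citation about spaces of Kan extensions, rather than into a general statement about naturality of adjoints.

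What each buys: the paper's route avoids ever writing down $\pr_!$ as a functor---it only needs the \emph{space} of extensions, which is manifestly natural in $[n]$ and in $\Gr(H)$ because the relative join is. Your route is more conceptual and makes the relationship to adjunctions transparent (including your nice observation that the global colimit is the partial left adjoint to the inclusion of the fiber over $\pt_\Cati$), but it requires a precise reference for the functoriality of partial left adjoints across a family of right adjoints; in practice that reference would unwind to exactly the T.4.3.2.15/T.4.3.3.10 combination the paper uses. Either way, the low-degree analysis you give for the three asserted properties is correct and matches the paper's.
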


\begin{proof}
In order to prove the claim, we make the following construction.  Define $\Lax(\C)' \in \Cati$ to be the unique $\infty$-category such that $\Nervei(\Lax(\C)')_\bullet \in \CSS \subset s\S$ is given by
\[ \bD^{op} \hookra (\Cati)^{op} \xra{\forget_{\coCartFib(-)^\simeq} \left( \Gr \left(
\coCartFib(-)^\simeq \xra{\forget_{(-)}^\dagger \left( = \underset{(-)}{\diamond} (-) \right)} (\Cati)^{op} \xra{\hom_\Cati(=,\C)} \S
\right) \right)_!} \S , \]
where the subscript decorating the second functor indicates that we are restricting to the subspace corresponding to those pairs of a functor $[n] \xra{F} \Cati$ and a map
\[ \forget_{[n]}^\dagger \left( \Gr(F) \underset{[n]}{\diamond} [n] \right) \ra \C \]
such that the induced composite diagram
\[ \begin{tikzcd}
\forget_{[n]}^\dagger ( \Gr(F) ) \arrow{r} \arrow[hook]{d} & \C \\
\forget_{[n]}^\dagger \left( \Gr(F) \underset{[n]}{\diamond} [n] \right) \arrow{ru}
\end{tikzcd} \]
defines a left Kan extension (along the inclusion of a full subcategory).  The canonical inclusions
\[ \Gr(F) \hookra \Gr(F) \underset{[n]}{\diamond} [n] \hookla [n] \]
induce maps
\[ \Nervei(\Lax(\C))_\bullet \la \Nervei(\Lax(\C)')_\bullet \ra \Nervei(\C)_\bullet \times \Nervei(\Cati)_\bullet \]
in $\CSS \subset s\S$, where the identification of $\Nervei(\Lax(\C))_\bullet$ comes from \cref{nerve of Lax}\ref{all of nerve of Lax} and the identification of $\Nervei(\C)_\bullet \times \Nervei(\Cati)_\bullet$ follows from \cref{nerve of Cati} and \cref{Gr of a constant functor}.  It then follows from Proposition T.4.3.2.15 that we have an induced factorization
\[ \begin{tikzcd}
\Lax(\C) & \Lax(\C)' \arrow{l} \arrow[dashed]{ld}[sloped]{\sim} \\
\Lax(\C)^{\colim} \arrow[hook]{u}
\end{tikzcd} \]
via an equivalence in $\Cati$.  Moreover, combining this diagram with the composite $\Lax(\C)' \ra \C \times \Cati \ra \C$ with the projection map gives us a unique functor
\[ \begin{tikzcd}
\Lax(\C) & \Lax(\C)' \arrow{l} \arrow{r} \arrow{ld}[sloped]{\sim} & \C \\
\Lax(\C)^{\colim} \arrow[hook]{u} \arrow[dashed, bend right=15]{rru}
\end{tikzcd} \]
making the diagram commute.  By Proposition T.4.3.3.10, this is precisely the desired functor; moreover, it is clear from the construction that it encodes the asserted functorialities.
\end{proof}

\begin{rem}\label{functoriality of global colim}
Clearly, the global colimit functor (\cref{global colimit}) is itself functorial in the following sense: if $\C_1 \ra \C_2$ is a functor which commutes with all colimits existing in $\C_1$, then we obtain a commutative square
\[ \begin{tikzcd}[column sep=1.5cm]
\Lax(\C_1)^{\colim} \arrow{r}{\colim^{\C_1}} \arrow{d} & \C_1 \arrow{d} \\
\Lax(\C_2)^{\colim} \arrow{r}[swap]{\colim^{\C_2}} & \C_2
\end{tikzcd} \]
in $\Cati$.
\end{rem}

\begin{rem}
One could also construct the global colimit functor (i.e.\! prove \cref{global colimit}) in the following way.  First of all, the Grothendieck construction of the source projection
\[ \Lax(\C) \xra{s_{\Lax(\C)}} \Cati \]
produces a ``tautological bundle'' over $\Lax(\C)$, a cocartesian fibration whose fiber over an object $(\D \xra{F} \C) \in \Lax(\C)$ is $\D$ itself.  This can moreover be shown to admit a tautological map to $\C$ which, restricted to such a fiber, is precisely the functor $\D \xra{F} \C$.  The global colimit functor can then be produced by appealing to Proposition T.4.2.2.7.  However, this method is in fact quite a bit more involved than the route we have taken here.
\end{rem}

\section{Homotopy pullbacks in $(\Cati)_\Thomason$, finality, and Theorems {\thmA}, {\Bn}, and {\Cn}}\label{section ho-p.b.'s in Thomason}

Via the Thomason model structure on $\Cati$ of \cref{section thomason}, we can consider $\infty$-categories as ``presentations of spaces''; the corresponding localization functor $\Cati \ra \loc{\Cati}{\bW_\Thomason} \simeq \S$ is that of groupoid completion.  Being a left adjoint, this functor commutes with colimits, but in general its interplay with limits is much more complicated.  In this section, we describe certain sufficient conditions under which it commutes with a given pullback.

In the 1-categorical case, there is a long history of results of this variety, going back to Quillen's celebrated \cite[Theorem B]{QuillenAKTI}.  The current state of the art seems to be Barwick--Kan's pair of results \cite[Theorems {\Bn} (5.6) and {\Cn} (5.8)]{BK-partial} (the former generalizing Dwyer--Kan--Smith's \cite[Theorem {\Bn} (6.2)]{DKS-hocommdiag}, the latter identical to their \cite[Theorem {\Cn} (6.4)]{DKS-hocommdiag}), as described in \cref{subsection outline}.

The main goal of this section is to give $\infty$-categorical generalizations of these results; these appear in \cref{subsection thms Bn and Cn}.  In \cref{subsection property Q etc} we work towards this goal with a pair of foundational results surrounding homotopy pullbacks in $(\Cati)_\Thomason$ (whose 1-categorical analogs constitute the main input to the proof of \cite[Theorem B]{QuillenAKTI}), and in \cref{subsection thm A} we take a moment to briefly restate Joyal's quasicategorical analog (namely Theorem T.4.1.3.1) of Quillen's \cite[Theorem A]{QuillenAKTI} in invariant langauge (i.e.\! stated in $\Cati$ instead of in $s\Set_\Joyal$).

\subsection{Homotopy pullbacks in $(\Cati)_\Thomason$: a first pass}\label{subsection property Q etc}

In and of themselves, pullbacks among $\infty$-categories are relatively understandable; for instance, limits commute with the right adjoint of the composite adjunction
\[ \begin{tikzcd}[column sep=1.5cm]
s\S 
{\arrow[transform canvas={yshift=0.7ex}]{r}{\leftloc_\CSS}[swap, transform canvas={yshift=0.25ex}]{\scriptstyle \bot} \arrow[transform canvas={yshift=-0.7ex}, hookleftarrow]{r}[swap]{\forget_\CSS}}
& \CSS
{\arrow[transform canvas={yshift=0.7ex}]{r}{\Nervei^{-1}}[swap, transform canvas={yshift=0.05ex}]{\sim} \arrow[transform canvas={yshift=-0.7ex}, leftarrow]{r}[swap]{\Nervei}}
& \Cati .
\end{tikzcd} \]
On the other hand, it is a subtle question to determine when such a pullback commutes with groupoid completion (or, working in complete Segal spaces, with geometric realization (recall \cref{rnerves:groupoid-completion of CSSs})).  In this subsection, we address this question in the special case that one of the maps in the pullback is a special sort of cocartesian fibration.

We begin with the relevant definition, an analog of \cite[4.4]{QuillenAKTI}.

\begin{defn}\label{define prop Q}
We say that a functor $\C \xra{F} \Cati$ has \bit{property {\propQ}} if it factors through $\bW_\Thomason \subset \Cati$.
\end{defn}

The importance of \cref{define prop Q} stems from the following result, which we call \bit{Lemma {\propQ}}, an analog of \cite[Lemma 4.5]{BK-partial} (there called ``Quillen's lemma'').

\begin{lem}\label{quillen's lemma}
If $\C \xra{F} \Cati$ factors through $\bW_\Thomason \subset \Cati$, then for any $x \in \C$, the fiber inclusion
\[ \begin{tikzcd}
F(x) \arrow{r} \arrow{d} & \Gr(F) \arrow{d} \\
\{ x \} \arrow[hook]{r} & \C
\end{tikzcd} \]
is a homotopy pullback square in $(\Cati)_\Thomason$, i.e.\! it gives rise to a pullback square
\[ \begin{tikzcd}
F(x)^\gpd \arrow{r} \arrow{d} & \Gr(F)^\gpd \arrow{d} \\
\{ x \}^\gpd \arrow[hook]{r} & \C^\gpd
\end{tikzcd} \]
in $\S$.
\end{lem}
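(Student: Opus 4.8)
The plan is to exploit the fact, established in \cref{groupoid-completion of the grothendieck construction}, that $\Gr(-)^\gpd$ computes the homotopy colimit of $(-)^\gpd \circ F$, and to combine this with the fiber inclusion of \cref{def fib} being a tautological pullback in $\Cati$. The point is that under the hypothesis that $F$ factors through $\bW_\Thomason$, the functor $(-)^\gpd \circ F : \C \ra \S$ is a \emph{constant} diagram up to equivalence — every value $F(x)^\gpd$ is equivalent, and every transition map is an equivalence. So the square in question, after groupoid completion, is asking whether
\[ F(x)^\gpd \simeq \lim \left( \begin{tikzcd}[ampersand replacement=\&, row sep=small, column sep=small]
\& \Gr(F)^\gpd \arrow{d} \\
\{x\}^\gpd \arrow[hook]{r} \& \C^\gpd
\end{tikzcd} \right) , \]
and the right-hand side, by \cref{groupoid-completion of the grothendieck construction}, is the fiber of the map $\colim_\C ((-)^\gpd \circ F) \ra \C^\gpd$ over the point corresponding to $x$.

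First I would reduce to this last identification. Applying $(-)^\gpd$ to the fiber-inclusion square of \cref{def fib} gives a commutative square in $\S$, hence a comparison map from $F(x)^\gpd$ to the pullback; the task is to show it is an equivalence. Next I would observe that the map $\C \ra \C^\gpd$ induced on $F$ gives a natural transformation from $(-)^\gpd \circ F$ to the constant functor at $\colim_\C((-)^\gpd \circ F)$; but more to the point, since $F$ factors through $\bW_\Thomason$, the composite $(-)^\gpd \circ F$ factors through $\S^\simeq$ — i.e. it is a functor $\C \ra \S$ all of whose morphisms are sent to equivalences, hence factors through the groupoid completion $\C^\gpd$ as a functor $\C^\gpd \xra{\bar F} \S$. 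The homotopy colimit of $(-)^\gpd \circ F$ over $\C$ thus coincides with the colimit of $\bar F$ over $\C^\gpd$, and the canonical map to $\C^\gpd$ is precisely $\Grop$-type data: it exhibits $\colim_{\C^\gpd}(\bar F) \ra \C^\gpd$ as (the groupoid completion of) a left fibration over $\C^\gpd$. Since $\C^\gpd$ is a space, such a left fibration is classified straightforwardly, and its fiber over $x$ is recovered, up to equivalence, as $\bar F(x) \simeq F(x)^\gpd$ — this uses that over a space, the straightening/unstraightening is an honest equivalence (as recorded in \cref{over a space Gr two adjns become equivces} and \cref{slice cats are l/r fibns}), so that fibers of $\Gr(\bar F) \ra \C^\gpd$ agree with values of $\bar F$, which is exactly the content of a pullback square.

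Concretely, the cleanest route is: factor $(-)^\gpd \circ F$ as $\C \xra{(-)^\gpd} \C^\gpd \xra{\bar F} \S$; apply $\Gr$ to $\bar F$ to get a left fibration $\Gr(\bar F) \ra \C^\gpd$; use \cref{groupoid-completion of the grothendieck construction} (or rather its variant for $\S$-valued diagrams, where $\Gr$ already lands in left fibrations and $\Gr(\bar F) = \colim_{\C^\gpd}(\bar F)$ since $\C^\gpd$ is a space) to identify $\Gr(F)^\gpd \simeq \Gr(\bar F)$; then invoke \cref{def fib} applied to $\bar F$ over the space $\C^\gpd$, which gives the tautological pullback square with fiber $\bar F(x) \simeq F(x)^\gpd$; and finally check this square is compatible with the $(-)^\gpd$-image of the original fiber-inclusion square, so that the comparison map of the previous paragraph is indeed the equivalence just produced.

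The main obstacle I anticipate is the bookkeeping in the last compatibility check — verifying that the pullback square obtained abstractly (via $\bar F$ over $\C^\gpd$) is literally the image under $(-)^\gpd$ of the fiber-inclusion square of \cref{def fib}, rather than merely an abstractly isomorphic square. This amounts to tracking the naturality of the Grothendieck construction through the factorization $\C \to \C^\gpd \to \S$ and through the point inclusion $\{x\} \hookrightarrow \C$; it should follow formally from \cref{naturality of Gr} and \cref{Gr is trivial over pt} (which give that fiber inclusions are natural pullback squares), but it is the step that requires care rather than a one-line citation.
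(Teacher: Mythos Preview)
Your argument is correct and takes a genuinely different route from the paper's. The paper works with explicit quasicategorical models: it presents $\Gr(F) \to \C$ by a cocartesian fibration of quasicategories, replaces it by its left-fibration localization (classified by $(-)^\gpd \circ F$, via \cref{cocartfib to lfib corresponds to groupoid-completion}), invokes Proposition T.2.1.3.1 to see that the hypothesis on $F$ forces this left fibration to be an honest Kan fibration, and then uses right properness of $s\Set_\KQ$ to conclude that the strict fiber over a vertex is already a homotopy fiber. You instead stay model-independent: factor $(-)^\gpd \circ F$ through $\gamma \colon \C \to \C^\gpd$ as $\bar F$, identify $(\Gr(F)^\gpd \to \C^\gpd)$ with $(\Gr(\bar F) \to \C^\gpd)$, and read off the fiber as $\bar F(\gamma(x)) = F(x)^\gpd$ using that over a space the Grothendieck construction is just the straightening equivalence $\Fun(\C^\gpd,\S) \simeq \S_{/\C^\gpd}$. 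Your approach buys a fully invariant argument with no appeal to point-set models or right properness; the paper's approach buys directness, leaning on the model-categorical infrastructure already set up in \cref{notation for grothendieck construction}.

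One small remark: the step you pass over most quickly is actually the identification $\Gr(F)^\gpd \simeq \Gr(\bar F)$ \emph{over} $\C^\gpd$ --- equivalently, $\colim_\C((-)^\gpd \circ F) \simeq \colim_{\C^\gpd}(\bar F)$ --- rather than the final compatibility check you flag. This needs either that $\gamma \colon \C \to \C^\gpd$ is final (which it is, since $\gamma^*$ is fully faithful on functor $\infty$-categories and hence induces an equivalence on cocone spaces), or, more in the spirit of the paper's toolkit, that the composite left adjoint $\coCartFib(\C) \to \S_{/\C^\gpd} \simeq \Fun(\C^\gpd,\S)$ from \cref{notation for grothendieck construction} and \cref{always groupoid-complete} sends $\Gr(F)$ to $\gamma_!((-)^\gpd \circ F) \simeq \gamma_!\gamma^*\bar F \simeq \bar F$. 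The compatibility check is, by contrast, routine naturality (\cref{naturality of Gr}).
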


\begin{proof}
Let $\ttC \in s\Set_\Joyal^f$ be a quasicategory presenting $\C \in \Cati$, let $\ttD \ra \ttC$ be a JL-cocartesian fibration presenting $(\Gr(F) \ra \C) \in \coCartFib(\C)$, let $\ttD' \ra \ttC$ be a JL-left fibration presenting $(\leftloc_{\LFib(\C)}(\Gr(F)) \ra \C) \in \LFib(\C)$, and let $\ttx \in \ttC_0$ be a vertex corresponding to $x \in \C$ (see \cite[\cref{grjl:define JL-stuff}]{grjl}).  Let us define $\ttF,\ttF' \in s\Set$ via the pullback squares
\[ \begin{tikzcd}
\ttF \arrow{r} \arrow{d} & \ttD \arrow{d} \\
\pt_{s\Set} \arrow{r}[swap]{\ttx} & \ttC
\end{tikzcd} \]
and
\[ \begin{tikzcd}
\ttF' \arrow{r} \arrow{d} & \ttD' \arrow{d} \\
\pt_{s\Set} \arrow{r}[swap]{\ttx} & \ttC
\end{tikzcd} \]
in $s\Set$.  Considered in $s\Set_\Joyal$, these present fiber inclusions, the first of which is
\[ \begin{tikzcd}
F(x) \arrow{r} \arrow{d} & \Gr(F) \arrow{d} \\
\{ x \} \arrow[hook]{r} & \C
\end{tikzcd} \]
and the second of which, by \cref{cocartfib to lfib corresponds to groupoid-completion}, we can identify as
\[ \begin{tikzcd}
F(x)^\gpd \arrow{r} \arrow{d} & \leftloc_{\LFib(\C)}(\Gr(F)) \arrow{d} \\
\{ x \} \arrow[hook]{r} & \C .
\end{tikzcd} \]
Moreover, by Proposition T.2.1.3.1, the assertion that
\[ \C \xra{F} \Cati \xra{(-)^\gpd} \S \]
factors through $\S^\simeq \subset \S$ is equivalent to the assertion that the map $\ttD' \ra \ttC$ is in fact in $\bF_\KQ$.  Since $s\Set_\KQ$ is right proper (see \cite[Theorem 13.1.13]{Hirsch}), it follows that the pullback square
\[ \begin{tikzcd}
\ttF' \arrow{r} \arrow{d} & \ttD' \arrow[two heads]{d} \\
\pt_{s\Set} \arrow{r}[swap]{\ttx} & \ttC
\end{tikzcd} \]
in $s\Set_\KQ$ is also a homotopy pullback square, which implies that
\[ \begin{tikzcd}
F(x)^\gpd \arrow{r} \arrow{d} & \leftloc_\Fib(\Gr(F))^\gpd \arrow{d} \\
\{ x \}^\gpd \arrow[hook]{r} & \C^\gpd
\end{tikzcd} \]
is a pullback square in $\S$.  By \cref{always groupoid-complete} we obtain an equivalence
\[ \Gr(F)^\gpd \xra{\sim} (\leftloc_{\LFib(\C)}(\Gr(F)))^\gpd \]
in $\S_{/\C^\gpd}$, which completes the proof.
\end{proof}

We also have the following parametrized version of Lemma {\propQ} (\ref{quillen's lemma}).

\begin{cor}\label{pullback of cocart fibn}
If $\C \xra{F} \Cati$ has property {\propQ}, then for any $\D \xra{G} \C$, the resulting pullback $\D \times_\C \Gr(F)$ is a homotopy pullback in $(\Cati)_\Thomason$.
\end{cor}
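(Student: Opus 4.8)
The plan is to reduce to Lemma {\propQ} (\cref{quillen's lemma}) by means of the naturality of the Grothendieck construction, and then to verify that the resulting square of spaces is a pullback by a direct computation of its vertical fibers.

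First I would invoke the naturality of the Grothendieck construction to identify the pullback $\D \times_\C \Gr(F)$ canonically with $\Gr(F \circ G)$, compatibly with the projection down to $\D$ and with the canonical map across to $\Gr(F)$. Moreover, $F \circ G : \D \ra \C \ra \Cati$ again has property {\propQ}, since for every morphism $\varphi$ of $\D$ we have $(F \circ G)(\varphi) = F(G(\varphi)) \in \bW_\Thomason$ (using that $F$ has property {\propQ}). Thus the assertion to be proved is precisely that the square
\[ \begin{tikzcd}
\Gr(F \circ G)^\gpd \arrow{r} \arrow{d} & \Gr(F)^\gpd \arrow{d} \\
\D^\gpd \arrow{r}{G^\gpd} & \C^\gpd
\end{tikzcd} \]
is a pullback in $\S$, its top map being the one induced by the canonical map $\Gr(F \circ G) = \D \times_\C \Gr(F) \ra \Gr(F)$.

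Next I would use the fact that a commutative square of spaces is a pullback if and only if, for a representative of each connected component of its lower-left corner, the induced map on vertical fibers (over that point and over its image in the lower-right corner) is an equivalence. Since every component of $\D^\gpd$ contains the image of an object of $\D$, it suffices to treat the images of objects $d \in \D$. By \cref{quillen's lemma} applied to $F \circ G$, the fiber of $\Gr(F \circ G)^\gpd \ra \D^\gpd$ over the image of $d$ is $((F \circ G)(d))^\gpd = (F(G(d)))^\gpd$. On the other hand, the image of this point under $G^\gpd$ is the image of $G(d) \in \C$, so by \cref{quillen's lemma} applied to $F$ at the object $G(d)$, the fiber of $\Gr(F)^\gpd \ra \C^\gpd$ over it is likewise $(F(G(d)))^\gpd$.

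It then remains to check that the map between these two fibers induced by the top horizontal map is an equivalence -- in fact the identity. This follows once one observes (recalling \cref{def fib}) that already at the level of $\Cati$ the fiber inclusion of $\Gr(F \circ G) = \D \times_\C \Gr(F) \ra \D$ over $d$ and the fiber inclusion of $\Gr(F) \ra \C$ over $G(d)$ fit into a single pasting of pullback squares, exhibiting the common fiber $F(G(d))$ with compatible maps into $\Gr(F \circ G)$ and $\Gr(F)$; groupoid-completing, the two identifications supplied by \cref{quillen's lemma} above are compatible with the top horizontal map. I expect the only point requiring genuine care to be exactly this last compatibility -- namely that the two invocations of Lemma {\propQ} yield the same identification of $(F(G(d)))^\gpd$, compatibly with the horizontal maps -- but this is a routine diagram chase using \cref{def fib} and the pasting lemma for pullbacks.
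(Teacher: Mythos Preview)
Your proposal is correct and follows essentially the same route as the paper: identify the pullback as $\Gr(F \circ G)$ via naturality, apply Lemma~{\propQ} to both $F$ and $F \circ G$ to compute the vertical fibers of the groupoid-completed square over any point coming from an object $d \in \D$, and conclude by the fiberwise criterion for pullbacks in $\S$. The paper packages the compatibility check you describe at the end into a single cube diagram in $\Cati$ whose oblique faces are the two fiber inclusions, but the content is the same.
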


\begin{proof}
By the naturality of the Grothendieck construction, we have a commutative square
\[ \begin{tikzcd}
\Gr(F \circ G) \arrow{r} \arrow{d} & \Gr(F) \arrow{d} \\
\D \arrow{r}[swap]{G} & \C
\end{tikzcd} \]
in $\Cati$, where both of the vertical maps are cocartesian fibrations.  We would like to show that this becomes a pullback square upon application of $(-)^\gpd : \Cati \ra \S$.  For this, note that any map $\pt_\S \xra{x} \D^\gpd$ in $\S$ comes from a map $\pt_{\Cati} \xra{\tilde{x}} \D$ in $\Cati$.  Then, we have the diagram
\[ \begin{tikzcd}
& \Gr(F \circ G) \arrow{rr} \arrow{dd} & & \Gr(F) \arrow{dd} \\
(F\circ G)(x) \arrow[crossing over]{rr}[pos=0.6]{\sim} \arrow{ru} \arrow{dd} & & F(G(x)) \arrow{ru} \\
& \D \arrow{rr}[pos=0.4]{G} & & \C \\
\{ \tilde{x} \} \arrow{rr}[swap, pos=0.6]{\sim} \arrow[hook]{ru} & & \{ G(\tilde{x}) \} \arrow[leftarrow, crossing over]{uu} \arrow[hook]{ru}
\end{tikzcd} \]
in $\Cati$, whose back face is the above commutative square and in which the upper oblique arrows are the fiber inclusions over $\tilde{x} \in \D$ and $G(\tilde{x}) \in \C$.  By Lemma {\propQ} (\ref{quillen's lemma}), we obtain that applying $(-)^\gpd : \Cati \ra \S$ to this commutative diagram yields a commutative diagram in $\S$ in which the oblique squares are pullbacks.  Hence, the square
\[ \begin{tikzcd}
\Gr(F \circ G)^\gpd \arrow{r} \arrow{d} & \Gr(F)^\gpd \arrow{d} \\
\D^\gpd \arrow{r}[swap]{G^\gpd} & \C^\gpd
\end{tikzcd} \]
is a pullback in $\S$, since for every point of $\D^\gpd$ the induced map on corresponding fibers is an equivalence.
\end{proof}

\subsection{Finality and Theorem A}\label{subsection thm A}

In this subsection, we briefly recall a few definitions and results from \sec T.4.1, restating them in invariant language.

\begin{defn}\label{define final}
A functor $\I \xra{F} \J$ is called \bit{final} if, for any functor $\J \xra{G} \C$ such that $\colim_\J G$ exists, the colimit $\colim_\I (G \circ F)$ also exists and the natural map
\[ \colim_\I (G \circ F) \ra \colim_\J G \]
(in the sense of the global colimit functor (\cref{global colimit})) is an equivalence in $\C$.

Dually, a functor $\I \xra{F} \J$ is called \bit{initial} if, for any functor $\J \xra{G} \C$ such that $\lim_\J G$ exists, the limit $\lim_\I(G \circ F)$ also exists and the natural map
\[ \lim_\J(G) \ra \lim_\I(G \circ F) \]
(in the sense of the global limit functor (the dual of \cref{global colimit})) is an equivalence in $\C$.
\end{defn}

\begin{rem}
A functor $\I \ra \J$ is initial if and only if its opposite $\I^{op} \ra \J^{op}$ is final.
\end{rem}

\begin{rem}
The notion of finality given in \cref{define final} is also sometimes called ``cofinality'' or ``right cofinality'', while that initiality is also sometimes called ``co-cofinality'' or ``left cofinality''.  We have chosen our terminology because it seems most natural: the simplest example of a final functor is the inclusion $\{ \pt_\I \} \hookra \I$ of a final object, while the simplest example of an initial functor is the inclusion $\{ \es_\I \} \hookra \I$ of an initial object.
\end{rem}

\begin{rem}\label{rem final is cofinal}
By Proposition T.4.1.1.8 (see also Corollary T.4.1.1.10), the notion of finality given in \cref{define final} is an invariant version of the quasicategorical notion of ``cofinality'' (given e.g.\! as Definition T.4.1.1.1).
\end{rem}

\begin{prop}\label{final functor is an equivalence on groupoid-completions}
Every final functor lies in $\bW_\Thomason \subset \Cati$.
\end{prop}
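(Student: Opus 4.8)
The plan is to reduce the statement to a known fact about colimits in the $\infty$-category $\S$ of spaces, namely that the groupoid completion $\C \mapsto \C^\gpd$ is, after identifying $\loc{\Cati}{\bW_\Thomason} \simeq \S$, precisely the localization functor for the Thomason model structure, so that showing $F \in \bW_\Thomason$ is the same as showing $F^\gpd$ is an equivalence in $\S$. Given a final functor $\I \xra{F} \J$, I want to exhibit $F^\gpd : \I^\gpd \ra \J^\gpd$ as an equivalence, and the natural candidate for doing this is to compute both sides as colimits of a constant diagram and invoke the finality of $F$ itself.

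First I would recall the standard identification of the groupoid completion of an $\infty$-category as a colimit: for any $\C \in \Cati$ one has $\C^\gpd \simeq \colim_\C (\const(\pt_\S))$, the colimit over $\C$ of the constant functor at the point in $\S$. (This is immediate from \cref{groupoid-completion of the grothendieck construction} applied to the constant functor $\C \xra{\const(\pt_\Cati)} \Cati$, using \cref{Gr of a constant functor} to identify $\Gr(\const(\pt_\Cati)) \simeq \C$, and then postcomposing with $(-)^\gpd$; alternatively it is the well-known formula for groupoid completion as a colimit of points.) Now apply \cref{define final} to the constant functor $\J \xra{\const(\pt_\S)} \S$: since $\colim_\J \const(\pt_\S) \simeq \J^\gpd$ exists, finality of $F$ gives that $\colim_\I(\const(\pt_\S) \circ F) = \colim_\I \const(\pt_\S) \simeq \I^\gpd$ exists and that the natural map
\[ \I^\gpd \simeq \colim_\I \const(\pt_\S) \ra \colim_\J \const(\pt_\S) \simeq \J^\gpd \]
is an equivalence in $\S$. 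It remains only to check that this natural map agrees (up to the stated equivalences) with $F^\gpd$, which is a matter of unwinding the construction of the natural map of \cref{global colimit} against the naturality of the groupoid-completion-as-colimit identification; this is the kind of diagram-chase that the naturality statements already assembled (\cref{naturality of Gr}, \cref{functoriality of global colim}) make routine.

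I expect the only real point of care — the "main obstacle," such as it is — to be this last compatibility: verifying that the map on groupoid completions induced by $F$ (i.e.\ by functoriality of $(-)^\gpd : \Cati \ra \S$) is the \emph{same} map as the one produced by the global colimit functor applied to the commutative triangle $\const(\pt_\S) = \const(\pt_\S) \circ F$ over $\S$. Both are "the evident map," but making that precise requires tracking the identification $\C^\gpd \simeq \colim_\C \const(\pt_\S)$ through \cref{groupoid-completion of the grothendieck construction} and checking it is natural in $\C \in \Cati$ — which it is, since every ingredient (the Grothendieck construction of a constant functor, \cref{Gr of a constant functor}; Thomason's theorem, \cref{groupoid-completion of the grothendieck construction}; postcomposition with $(-)^\gpd$) is natural. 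Everything else is a direct citation of \cref{define final} applied to a single well-chosen diagram.
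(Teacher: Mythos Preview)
Your proof is correct, and it takes a genuinely different route from the paper's own argument. The paper simply cites Proposition T.4.1.1.3(3) from Lurie; implicitly this also requires \cref{rem final is cofinal} (i.e.\ Proposition T.4.1.1.8) to translate between the paper's colimit-based \cref{define final} and Lurie's quasicategorical notion of cofinality, so the paper's proof is really a two-step appeal to HTT.

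By contrast, your argument works directly from the paper's own \cref{define final}, applied to the single test diagram $\J \xra{\const(\pt_\S)} \S$, together with the identification $\C^\gpd \simeq \colim_\C \const(\pt_\S)$ extracted from \cref{groupoid-completion of the grothendieck construction} and \cref{Gr of a constant functor}. This is more self-contained relative to the paper's framework: it avoids the detour through Lurie's definition of cofinality entirely, at the cost of the compatibility check you flag (that the global-colimit map agrees with $F^\gpd$). That check is indeed routine via the naturality you cite, so the trade is favorable. What the paper's approach buys is brevity and a direct hook into the full strength of T.4.1.1.3; what yours buys is an argument that actually \emph{uses} the paper's chosen definition of finality rather than immediately abandoning it for Lurie's.
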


\begin{proof}
This follows from Proposition T.4.1.1.3(3).
\end{proof}

\begin{prop}\label{product of final functors is final}
If $\I_1 \xra{F_1} \J_1$ and $\I_2 \xra{F_2} \J_2$ are both final, then so is $\I_1 \times \I_2 \xra{F_1 \times F_2} \J_1 \times \J_2$.
\end{prop}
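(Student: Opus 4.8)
=== PROOF PROPOSAL ===

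\textbf{Approach.} The plan is to reduce the statement to the characterization of finality via colimits given in \cref{define final}, rather than trying to manipulate the quasicategorical ``cofinality'' criterion of \sec T.4.1 directly. The key observation is that a product of diagram $\infty$-categories computes colimits ``one variable at a time'': for a functor $G : \J_1 \times \J_2 \to \C$ into an $\infty$-category admitting the relevant colimits, one has a canonical equivalence $\colim_{\J_1 \times \J_2} G \simeq \colim_{j_1 \in \J_1} \colim_{j_2 \in \J_2} G(j_1, j_2)$, obtained by writing the colimit over the product as an iterated colimit (this is the standard Fubini-type fact for colimits, e.g.\ via Proposition T.4.1.1.3 applied to the projection, or directly from the universal property). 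I would first record this as the main technical input.

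\textbf{Key steps.} First I would fix a functor $G : \J_1 \times \J_2 \to \C$ such that $\colim_{\J_1 \times \J_2} G$ exists, and argue that the iterated colimits $\colim_{j_2} G(j_1, j_2)$ (for each fixed $j_1$) and then $\colim_{j_1} \colim_{j_2} G(j_1,j_2)$ exist and agree with it. Second, since $F_2 : \I_2 \to \J_2$ is final, for each fixed $j_1 \in \J_1$ the diagram $G(j_1, -) : \J_2 \to \C$ has $\colim_{\I_2} G(j_1, F_2(-)) \simeq \colim_{\J_2} G(j_1, -)$, naturally in $j_1$; more precisely, postcomposing with $\id_{\J_1} \times F_2$ and invoking finality of $F_2$ in families, one gets $\colim_{\J_1 \times \I_2}(G \circ (\id \times F_2)) \simeq \colim_{\J_1 \times \J_2} G$. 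Third, applying finality of $F_1 : \I_1 \to \J_1$ to the diagram $j_1 \mapsto \colim_{\I_2} G(j_1, F_2(-))$ yields $\colim_{\I_1 \times \I_2}(G \circ (F_1 \times F_2)) \simeq \colim_{\J_1 \times \I_2}(G \circ (\id \times F_2))$. Chaining these three equivalences and checking that the composite agrees with the canonical comparison map of \cref{define final} gives finality of $F_1 \times F_2$. Along the way one should note that existence of all intermediate colimits is automatic from the hypotheses (finality guarantees existence, not just invariance), and that the whole chain is natural, so the identification with the global-colimit comparison map of \cref{global colimit} is forced.

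\textbf{Main obstacle.} The delicate point is the ``in families'' version of finality used in the second and third steps: \cref{define final} as literally stated gives the invariance of $\colim$ for a single diagram, whereas here I need it applied to a diagram $\J_1 \times \J_2 \to \C$ while only contracting the second factor, and then to recognize the resulting comparison as a colimit-invariance statement for the functor $F_1$ with values in (a subcategory of) $\C$ of iterated colimits. The cleanest way around this is probably to work quasicategorically after all, via \cref{rem final is cofinal}: use the characterization (Theorem T.4.1.3.1 / Proposition T.4.1.1.8) that $F$ is final iff for every $j \in \J$ the comma $\infty$-category $\I \times_\J \J_{j/}$ is weakly contractible, observe that $(\I_1 \times \I_2) \times_{\J_1 \times \J_2} (\J_1 \times \J_2)_{(j_1,j_2)/} \simeq (\I_1 \times_{\J_1} (\J_1)_{j_1/}) \times (\I_2 \times_{\J_2} (\J_2)_{j_2/})$, and conclude since a product of weakly contractible $\infty$-categories is weakly contractible (the groupoid completion functor $(-)^\gpd \simeq |\Nervei(-)_\bullet|$ commutes with finite products, being the localization presented by the Thomason structure, whose weak equivalences include the projections off a contractible factor by \cref{groupoid-completion of the grothendieck construction} applied to a constant diagram, or simply because $\S$ has finite products computed objectwise and $|{-}|$ is symmetric monoidal). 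I expect the bookkeeping in the comma-category identification to be routine, so the real content is just assembling these standard facts in the right order.
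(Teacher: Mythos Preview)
Your proposal is correct, via the second route you sketch: Theorem~{\thmA} together with the identification $(\I_1 \times \I_2) \times_{\J_1 \times \J_2} (\J_1 \times \J_2)_{(j_1,j_2)/} \simeq (\I_1 \times_{\J_1} (\J_1)_{j_1/}) \times (\I_2 \times_{\J_2} (\J_2)_{j_2/})$ and the fact that $(-)^\gpd$ preserves finite products. The paper, however, takes a different and shorter path: it factors $F_1 \times F_2$ as the composite
\[
\I_1 \times \I_2 \xra{F_1 \times \id_{\I_2}} \J_1 \times \I_2 \xra{\id_{\J_1} \times F_2} \J_1 \times \J_2,
\]
invokes Corollary~T.4.1.1.13 to conclude that each factor is final, and then applies Proposition~T.4.1.1.3(2) (final functors are closed under composition). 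Amusingly, the paper's footnote explicitly comments on your first approach, noting that deducing the result from Fubini ``would require slightly trickier manipulations depending on which of the various colimits actually exist'' --- exactly the existence-of-intermediate-colimits obstacle you flag. Your Theorem~{\thmA} argument is a perfectly good alternative and is self-contained modulo Theorem~{\thmA} itself; the paper's argument is terser but leans on the black box T.4.1.1.13, which already contains most of the content.
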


\begin{proof}
By Corollary T.4.1.1.13, both of the functors in the composite
\[ \I_1 \times \I_2 \xra{F_1 \times \id_{\I_2}} \J_1 \times \I_2 \xra{\id_{\J_1} \times F_2} \J_1 \times \J_2 \]
are final, and hence by Proposition T.4.1.1.3(2) their composite $F_1 \times F_2$ is also final.\footnote{This is of course closely related to Fubini's theorem for colimits, but to deduce it from that result would require slightly trickier manipulations depending on which of the various colimits actually exist.}
\end{proof}

The next result is the main point of this subsection, Joyal's $\infty$-categorical analog of \cite[Theorem A]{QuillenAKTI}; we refer to it simply as \bit{Theorem {\thmA}}.

\begin{thm}\label{theorem A}
A functor $\I \xra{F} \J$ is final iff for every object $j \in \J$,
\[ (\I \times_\J \J_{j/})^\gpd \simeq \pt_\S . \]
\end{thm}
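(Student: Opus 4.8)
The plan is to reduce this to the quasicategorical Theorem~T.4.1.3.1 by way of \cref{rem final is cofinal}, which tells us that the invariant notion of finality in \cref{define final} agrees with Joyal's quasicategorical notion. Given that dictionary, the content of \cref{theorem A} is the translation of the quasicategorical statement of Theorem~T.4.1.3.1 --- which characterizes cofinality of a map $p : S \to T$ of simplicial sets in terms of the weak contractibility (in $s\Set_\KQ$) of the fibers $S \times_T T_{/t}$ (or $S \times_T T_{t/}$, depending on the variance convention) --- into the language of $\infty$-categories and $\infty$-groupoids.

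First I would fix a quasicategory $\ttJ \in s\Set_\Joyal^f$ presenting $\J$ and a map $\ttI \to \ttJ$ presenting $F$, chosen so that $\ttI$ is also a quasicategory (this is harmless, as we may fibrantly replace in the Joyal model structure over $\ttJ$ without changing the weak equivalence class of the fibers we care about). For each object $j \in \J$, pick a vertex $\ttj \in \ttJ_0$ corresponding to it. The slice construction $\J_{j/}$ is modeled by the quasicategorical undercategory $\ttJ_{\ttj/}$, and the pullback $\I \times_\J \J_{j/}$ is modeled by the strict pullback $\ttI \times_\ttJ \ttJ_{\ttj/}$ in $s\Set$; since the forgetful map $\ttJ_{\ttj/} \to \ttJ$ is a left fibration (Corollary~T.2.1.2.2), hence in particular a Joyal fibration, this strict pullback is also a homotopy pullback, so it genuinely presents the $\infty$-categorical pullback. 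Then I would invoke Theorem~T.4.1.3.1 in its quasicategorical form: $\ttI \to \ttJ$ is cofinal iff each $\ttI \times_\ttJ \ttJ_{\ttj/}$ is weakly contractible as a simplicial set (i.e.\ its image in $s\Set_\KQ$ is contractible), ranging over all vertices $\ttj$ (and it suffices to check on vertices, since every object of $\J$ is represented by one). The last step is simply to observe that a quasicategory $\ttX$ is weakly contractible in $s\Set_\KQ$ precisely when the corresponding $\infty$-category $\X$ has $\X^\gpd \simeq \pt_\S$ --- this is exactly the statement that the Thomason localization $\Cati \to \S$ is groupoid completion, combined with the fact that $\ttX^\gpd$ is computed by the Kan-complex replacement of $\ttX$. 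Applying this with $\X = \I \times_\J \J_{j/}$ yields the stated equivalence $(\I \times_\J \J_{j/})^\gpd \simeq \pt_\S$, and conversely.

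The only genuine subtlety --- and the step I would flag as the main obstacle --- is matching the variance conventions. Lurie's Theorem~T.4.1.3.1 is phrased with a particular slice (one of $\ttT_{/t}$ versus $\ttT_{t/}$), and \cref{define final} here adopts the convention that the inclusion of a \emph{final} object is final, which forces the undercategory $\J_{j/}$ to appear (as it does in the statement). I would want to carefully check, using \cref{rem final is cofinal} together with Remark~T.4.1.1.2 and the discussion around Definition~T.4.1.1.1, that Lurie's ``cofinality'' matches our ``finality'' \emph{with the undercategory on the nose}, rather than with the overcategory (which would appear for the dual notion of initiality). Once the conventions are pinned down, the proof is the two-line reduction sketched above; I would present it as: \emph{``This is a restatement of Theorem~T.4.1.3.1, using \cref{rem final is cofinal} to identify the two notions of finality, the fact that $\J_{j/}$ is modeled by a left fibration (so its pullback along $\I \to \J$ is a homotopy pullback), and the identification of Thomason-localization with groupoid completion to rephrase weak contractibility as $(-)^\gpd \simeq \pt_\S$.''}
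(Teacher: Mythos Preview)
Your proposal is correct and follows essentially the same route as the paper: the paper's proof is a one-liner citing Theorem~T.4.1.3.1 together with Corollary~T.2.1.2.2 (and Remark~T.2.0.0.5, via what it calls ``the Reedy trick'') to ensure the quasicategorical pullback is a homotopy pullback in $s\Set_\Joyal$. You have simply unpacked these citations in more detail; the variance-matching you flag is indeed just bookkeeping and not a genuine obstacle.
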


\begin{proof}
This follows from Theorem T.4.1.3.1; by the Reedy trick (and the implications of Remark T.2.0.0.5 and Corollary T.2.1.2.2), the pullback given there is a homotopy pullback in $s\Set_\Joyal$.
\end{proof}

\begin{cor}\label{groupoid-completion of infty-cat with terminal object is contractible}
If $\J \in \Cati$ has a terminal object, then $\J^\gpd \simeq \pt_\S$.
\end{cor}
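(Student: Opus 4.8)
The plan is to reduce to \cref{final functor is an equivalence on groupoid-completions} by showing that the inclusion $\{\pt_\J\} \hookra \J$ of the terminal object $\pt_\J \in \J$ is a final functor; granting this, that result places the inclusion in $\bW_\Thomason \subset \Cati$, so it induces an equivalence on groupoid completions, and hence $\J^\gpd \simeq \{\pt_\J\}^\gpd \simeq \pt_\S$.

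To verify finality of the inclusion I would invoke Theorem {\thmA} (\cref{theorem A}): it suffices to check that $(\{\pt_\J\} \times_\J \J_{j/})^\gpd \simeq \pt_\S$ for every object $j \in \J$. Now by \cref{slice cats are l/r fibns} the undercategory projection $\J_{j/} \ra \J$ is the left fibration classified by $\hom_\J(j,-) : \J \ra \S$, so by \cref{def fib} its fiber over $\pt_\J$ --- which is exactly the pullback $\{\pt_\J\} \times_\J \J_{j/}$ --- is the mapping space $\hom_\J(j,\pt_\J)$. Since $\pt_\J$ is terminal this mapping space is contractible, and therefore so is its groupoid completion. This confirms the hypothesis of Theorem {\thmA}, so the inclusion is final, completing the argument.

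I do not anticipate any real obstacle. The only point demanding (minor) care is the identification of $\{\pt_\J\} \times_\J \J_{j/}$ with $\hom_\J(j,\pt_\J)$, but this is immediate from combining \cref{slice cats are l/r fibns} with \cref{def fib}. In fact one could shortcut the entire argument by simply appealing to the observation, recorded among the remarks following \cref{define final}, that the inclusion of a terminal object is the prototypical final functor; the proof above merely spells out why, via Theorem {\thmA}.
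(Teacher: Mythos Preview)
Your proposal is correct and follows essentially the same approach as the paper's own proof: both verify via Theorem~{\thmA} that the inclusion $\{\pt_\J\} \hookra \J$ is final by identifying $\{\pt_\J\} \times_\J \J_{j/} \simeq \hom_\J(j,\pt_\J) \simeq \pt_\S$, and then invoke \cref{final functor is an equivalence on groupoid-completions}. Your version simply spells out the fiber identification in slightly more detail than the paper does.
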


\begin{proof}
We apply \cref{theorem A} to deduce that the functor $\{ \pt_\J \} \hookra \J$ is final: for any $j \in \J$, we have that $\{ \pt_\J \} \times_\J \J_{j/} \simeq \hom_\J(j,\pt_\J) \simeq \pt_\S$.  Hence, the claim follows from \cref{final functor is an equivalence on groupoid-completions}.
\end{proof}

\begin{rem}
One could also prove \cref{groupoid-completion of infty-cat with terminal object is contractible} by observing that a functor $\pt_\Cati \ra \J$ is a right adjoint if (and only if) it selects a terminal object, and then appealing to \cref{rnerves:adjns induce equivces on gpd-complns}.
\end{rem}

\subsection{Theorems {\Bn} and {\Cn}: a second pass at homotopy pullbacks in $(\Cati)_\Thomason$}\label{subsection thms Bn and Cn}

In this final subsection, we provide $\infty$-categorical generalizations of Barwick--Kan's pair of results \cite[Theorems {\Bn} (5.6) and {\Cn} (5.8)]{BK-partial}.

\begin{rem}
The results of this subsection will be used in \cref{fundthm:section equivalence of 7 with hom in loc}.  In fact, there will actually employ their dual formulations.  Our choice of variances in this subsection are so that our exposition adheres as closely as possible to that of \cite[\sec 5]{BK-partial}.
\end{rem}

We begin with the following.

\begin{notn}\label{define walking zigzag categories}
For $n \geq 1$, we define $\word{z}_n \in \strcat$ by the pattern
\begin{align*}
\word{z}_1 & = \left( s \ra t \right) , \\
\word{z}_2 &= \left( s \la \bullet \ra t \right) , \\
\word{z}_3 &= \left( s \ra \bullet \la \bullet \ra t \right) , \\
\word{z}_4 &= \left( s \la \bullet \ra \bullet \la \bullet \ra t \right) ,
\end{align*}
etc.\! (where we have named only the leftmost and rightmost objects of these categories).
\end{notn}

We now give the following omnibus definition.

\begin{defn}\label{define phpb}
For any $n \geq 1$ and any $\C \in \Cati$, we define $(\C \da_n \C) = \Fun(\word{z}_n,\C)$.  Evaluation at the objects $s ,t \in \word{z}_n$ induces maps
\[ \C \xla{s} (\C \da_n \C) \xra{t} \C . \]
More generally, for any functors $\D \xra{F} \C$ and $\E \xra{G} \C$ and any objects $d \in \D$ and $e \in \E$, we define new $\infty$-categories and maps between them via the induced diagram
\[ \begin{tikzcd}
(F(d) \da_n G(e)) \arrow{r} \arrow{d} & (F(\D) \da_n G(e)) \arrow{r} \arrow{d} & (\C \da_n G(e)) \arrow{r} \arrow{d} & \pt_\Cati \arrow{d}{e} \\
(F(d) \da_n G(\E)) \arrow{r} \arrow{d} & (F(\D) \da_n G(\E)) \arrow{r} \arrow{d} & (\C \da_n G(\E)) \arrow{r} \arrow{d} & \E \arrow{d}{G} \\
(F(d) \da_n \C) \arrow{r} \arrow{d} & (F(\D) \da_n \C) \arrow{r} \arrow{d} & (\C \da_n \C) \arrow{r}[swap]{t} \arrow{d}{s} & \C \\
\pt_\Cati \arrow{r}[swap]{d} & \D \arrow{r}[swap]{F} & \C
\end{tikzcd} \]
in $\Cati$ in which all squares are pullbacks.  Thus, we may consider $(F(\D) \da_n G(\E))$ as simultaneously generalizing all three constructions $(F(\D) \da_n \C)$, $(\C \da_n G(\E))$ and $(\C \da_n \C)$, with the convention that if either $F$ or $G$ is simply $\id_\C$ then we omit it from the notation; we refer to any of these constructions (but especially to $(F(\D) \da_n G(\E))$) as a \bit{potential homotopy pullback $\infty$-category}.  Similarly, the construction $(F(d) \da_n G(\E))$ generalizes the construction $(F(d) \da_n \C)$ (in the case that $G = \id_\C$), while the construction $(F(\D) \da_n G(e))$ generalizes the construction $(\C \da_n G(e))$ (in the case that $F = \id_\C$).  Additionally, we denote all vertical maps landing at $\D$ (and in particular, all vertical maps in the third column landing at $\C$) by $s$ and refer to these as \bit{source} maps, and we denote all horizontal maps landing at $\E$ (and in particular, all horizontal maps in the third row landing at $\C$) by $t$ and refer to these as \bit{target} maps.
\end{defn}

\begin{rem}
To make sense of the terminology, one should think of the construction $(F(\D) \da_n G(\E))$ of \cref{define phpb} as a sort of ``directed'' analog of the standard explicit construction of a \textit{homotopy pullback} of topological spaces (as the space of pairs of points in the two sources of the cospan equipped with a path between their images in the common target).  The question, of course, is whether this actually computes the homotopy pullback in $(\Cati)_\Thomason$ (which explains the word ``potential'' in the name); a sufficient condition for this to be the case is precisely the content of Theorem {\Bn} (\ref{thm Bn}).  Continuing along these lines, one might think of $(F(\D) \da_n \C)$ and $(\C \da_n G(\E))$ as ``directed'' analogs of the \textit{mapping path space} construction, i.e.\! the standard explicit factorization of an arbitrary map of topological spaces as a weak equivalence followed by a fibration.  The reader may find these analogies helpful to keep in mind while reading the rest of this subsection.
\end{rem}

In order to simultaneously deal with the case when $n$ is even and when $n$ is odd, we also introduce the following.

\begin{notn}\label{half-open interval notn}
Inspired by the half-open intervals $\eveninterval$  and $\oddinterval$, we will enclose an expression by $\evendash$ when we mean for it to be read only when $n$ is even, while when we will enclose an expression by $\odddash$ when we mean for it to be read only when $n$ is odd.  So for instance, $\C^\evenop$ denotes $\C^{op}$ when $n$ is even, and simply denotes $\C$ when $n$ is odd.
\end{notn}

\begin{lem}\label{phpb gives fibns}
Let $n \geq 1$, let $\D \xra{F} \C$ and $\E \xra{G} \C$ be any functors.
\begin{enumerate}

\item\label{full phpb is fibn}

We have
\[ \left( (F(\D) \da_n G(\E)) \xra{s} \D \right) \in \evencoCartFib(\D) \]
and
\[ \left( (F(\D) \da_n G(\E)) \xra{t} \E \right) \in \coCartFib(\E) . \]

\item\label{fiber of phpb is fibn}

For any objects $d \in \D$ and $e \in \E$, we have
\[ \left( (F(\D) \da_n G(e)) \xra{s} \D \right) \in \evencoCartFib(\D) \]
and 
\[ \left( (F(d) \da_n G(\E)) \xra{t} \E \right) \in \coCartFib(\E) . \]
\end{enumerate}
\end{lem}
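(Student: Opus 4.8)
The only substantive inputs are the \emph{free co/cartesian fibration} construction together with the stability of co/cartesian fibrations under base change; everything else is bookkeeping of iterated pullbacks. Recall (the dual of Corollary T.2.4.7.12, or \cite[Theorem 4.5]{GHN}) that for any functor $p \colon X \to \C$ the composite $X \times_\C \Fun([1],\C) \xra{t} \C$, with the fibre product formed against $s \colon \Fun([1],\C) \to \C$, is a cocartesian fibration over $\C$ -- the free cocartesian fibration on $p$; dually, the composite $X \times_\C \Fun([1],\C) \xra{s} \C$, with the fibre product formed against $t$ instead, is a cartesian fibration over $\C$. We also use that cocartesian fibrations, and likewise cartesian fibrations, are stable under base change, by (the dual of) parts (2) and (3) of Proposition T.2.4.2.3.

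The plan is to reduce all four assertions to the following two statements, which will then be proved by induction on $n$:
\begin{itemize}
\item[\textbf{(A)}] for any functor $q \colon Y \to \C$ and any $n \geq 1$, the source projection $\Fun(\word{z}_n,\C) \times_\C Y \xra{s} \C$ (fibre product formed against $t$) lies in $\evencoCartFib(\C)$;
\item[\textbf{(B)}] for any functor $p \colon X \to \C$ and any $n \geq 1$, the target projection $X \times_\C \Fun(\word{z}_n,\C) \xra{t} \C$ (fibre product formed against $s$) is a cocartesian fibration over $\C$.
\end{itemize}
Granting these, each of the four claims is a single base change: unwinding the pullback squares defining the constructions in \cref{define phpb} (and using that limits commute with limits), the map $(F(\D) \da_n G(\E)) \xra{s} \D$ is the base change along $F$ of the map in (A) with $q = G$; the map $(F(\D) \da_n G(e)) \xra{s} \D$ is the base change along $F$ of the map in (A) with $q = (\pt_\Cati \xra{G(e)} \C)$; the map $(F(\D) \da_n G(\E)) \xra{t} \E$ is the base change along $G$ of the map in (B) with $p = F$; and the map $(F(d) \da_n G(\E)) \xra{t} \E$ is the base change along $G$ of the map in (B) with $p = (\pt_\Cati \xra{F(d)} \C)$. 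The stated co/cartesian-fibration properties then follow from (A), (B), and stability under base change.

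To prove (A) and (B) I would use the evident decomposition of the ``walking zigzag''. Detaching the last edge of $\word{z}_n$ (for $n \geq 2$) presents it as $\word{z}_{n-1}$ with a copy of $[1]$ glued on along the target of $\word{z}_{n-1}$ (identified with $0 \in [1]$), the target of $\word{z}_n$ now being $1 \in [1]$; applying $\Fun(-,\C)$ turns this into a pullback $\Fun(\word{z}_n,\C) \simeq \Fun(\word{z}_{n-1},\C) \times_\C \Fun([1],\C)$ compatible with the source and target projections. From this one reads off that $X \times_\C \Fun(\word{z}_n,\C) \xra{t} \C$ \emph{is} the free cocartesian fibration on $X \times_\C \Fun(\word{z}_{n-1},\C) \xra{t} \C$; since the case $n = 1$ is literally the free cocartesian fibration on $p$, and a free cocartesian fibration is cocartesian regardless of the functor it is built on, (B) follows. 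For (A) one instead detaches the \emph{first} edge, presenting $\word{z}_n$ as $[1]$ glued to $\word{z}_{n-1}$ along the source of $\word{z}_{n-1}$; crucially the orientation of this first edge alternates with the parity of $n$ -- it points out of $s$ when $n$ is odd and into $s$ when $n$ is even (see \cref{define walking zigzag categories}) -- so the resulting identification exhibits $\Fun(\word{z}_n,\C) \times_\C Y \xra{s} \C$ as the free \emph{cartesian} fibration on $\Fun(\word{z}_{n-1},\C) \times_\C Y \xra{s} \C$ when $n$ is odd and as the free \emph{cocartesian} fibration on the same when $n$ is even -- which is exactly what $\evencoCartFib$ records. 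As the case $n = 1$ is the free cartesian fibration on $q$, this proves (A) by induction.

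The fibration theory here is thus almost trivial once the framework is set up; the only real obstacle is the parity-and-orientation bookkeeping in the $\word{z}_n$-decomposition -- keeping track of which of the two evaluation maps of each $\Fun([1],\C)$, and which projection of $\Fun(\word{z}_{n-1},\C)$, is being glued, pulled back, or forgotten, so that one correctly obtains the source projections alternating between cartesian and cocartesian while the target projections remain uniformly cocartesian.
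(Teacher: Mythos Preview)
Your argument is correct and follows essentially the same route as the paper: reduce via pullback-stability of co/cartesian fibrations to the case over $\C$, then detach the terminal (resp.\ initial) edge of $\word{z}_n$ and recognize the resulting projection as a free co/cartesian fibration via (the dual of) Corollary~T.2.4.7.12, which is cocartesian (resp.\ $\evencoCartFib$) regardless of the input functor. One small slip worth flagging: detaching the \emph{last} edge of $\word{z}_n$ does not leave $\word{z}_{n-1}$ (e.g.\ for $n=2$ you get $s \la \bullet$, not $s \to \bullet$) --- the paper simply calls this remainder $\word{z}_n'$ --- but as you yourself observe, the free-cocartesian-fibration construction is indifferent to what this remainder is, so your ``induction'' for (B) collapses to a single application and the mislabel is harmless; your first-edge decomposition for (A), by contrast, really does yield $\word{z}_{n-1}$ and correctly accounts for the parity.
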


\begin{proof}
In both parts, we will only prove the second of the two claims; the first claims follow from nearly identical arguments.  Moreover, since cocartesian fibrations are stable under pullback, it suffices to prove these statements in the case that $G = \id_\C$.  For this, let us denote by $t' \in \word{z}_n$ the penultimate object (reading from left to right), and let us denote by $\word{z}_n' \subset \word{z}_n$ the full subcategory on all the objects besides $t \in \word{z}_n$, so that we can identify $\word{z}_n$ as a pushout
\[ \word{z}_n \simeq \word{z}_n' \coprod_{t' , \pt_\Cati , 0 } [1] \]
in $\Cati$.  We therefore obtain a diagram
\[ \begin{tikzcd}
(F(d) \da_n \C) \arrow{r} \arrow{d} & (F(\D) \da_n \C) \arrow{r} \arrow{d} & (\C \da_n \C) \arrow{r} \arrow{d} & \Fun([1],\C) \arrow{r}{\ev_1} \arrow{d}{\ev_0} & \C \\
\pt_\Cati \underset{F(d) , \C , \ev_s}{\times} \Fun(\word{z}_n' , \C) \arrow{r} \arrow{d} & \D \underset{F , \C , \ev_s}{\times} \Fun(\word{z}_n' , \C) \arrow{r} \arrow{d} & \Fun(\word{z}_n' , \C) \arrow{r}[swap]{\ev_{t'}} \arrow{d}{\ev_s} & \C \\
\pt_\Cati \arrow{r}[swap]{d} & \D \arrow{r}[swap]{F} & \C
\end{tikzcd} \]
in $\Cati$ in which all squares are pullbacks and the upper row contains as composites the two functors $(F(\D) \da_n \C) \xra{t} \C$ and $(F(d) \da_n \C) \xra{t} \C$ which we would like to show are cocartesian fibrations.  Both claims now follow by applying the dual of Corollary T.2.4.7.12 to the corresponding composites
\[ \D \underset{F , \C , \ev_s}{\times} \Fun(\word{z}_n',\C) \ra \C \]
and 
\[ \pt_\Cati \underset{F(d) , \C , \ev_s}{\times} \Fun(\word{z}_n',\C) \ra \C \]
in the middle row.
\end{proof}

\begin{notn}\label{notn phpb as fctr to Cati}
We denote the functors classifying the co/cartesian fibrations of \cref{phpb gives fibns}\ref{full phpb is fibn} by
\[ \D^\oddop \xra{(F(-) \da_n G(\E))} \Cati \]
and
\[ \E \xra{(F(\D) \da_n G(-))} \Cati , \]
and we denote the functors classifying the co/cartesian fibrations of \cref{phpb gives fibns}\ref{fiber of phpb is fibn} by
\[ \D^\oddop \xra{(F(-) \da_n G(e))} \Cati \]
and
\[ \E \xra{(F(d) \da_n G(-))} \Cati \]
(again omitting the functor $F$ or $G$ if it is just $\id_\C$).  Note that this is indeed consistent with the notation given in \cref{define phpb}, and identifies all of the various pullbacks over $\pt_\Cati$ in the diagram given there as fiber inclusions.  (In particular, the $\infty$-category $(F(d) \da_n G(e))$ includes as a fiber of two different co/cartesian fibrations.)
\end{notn}

\begin{rem}\label{lift to fctrs to fibns over other}
We may consider \cref{phpb gives fibns}\ref{fiber of phpb is fibn} as equipping the first two functors defined in \cref{notn phpb as fctr to Cati} with lifts
\[ \begin{tikzcd}[column sep=2.5cm]
& \coCartFib(\E) \arrow{d} \\
\D^\oddop \arrow[dashed]{ru} \arrow{r}[swap]{(F(-) \da_n G(\E))} & \Cati
\end{tikzcd} \]
and
\[ \begin{tikzcd}[column sep=2.5cm]
& \evencoCartFib(\D) \arrow{d} \\
\E \arrow[dashed]{ru} \arrow{r}[swap]{(F(\D) \da_n G(-))} & \Cati
\end{tikzcd} \]
(through the evident forgetful functors).  (Preservation of co/cartesian morphisms follows from Corollary T.2.4.7.12.)
\end{rem}

We then have the following elementary but important observations.

\begin{prop}\label{common sections and htpy equivces}
Let $n \geq 1$, and choose any functor $\D \xra{F} \C$ and $\E \xra{G} \C$.
\begin{enumerate}
\item\label{common section} The unique functor $\word{z}_n \ra \pt_\Cati$ induces a common section
\[ \begin{tikzcd}
(\C \da_n \C) \arrow[transform canvas={yshift=+0.7ex}]{r}{s} \arrow[transform canvas={yshift=-0.7ex}]{r}[swap]{t} & \C \arrow[dashed, bend right, out=-40]{l}[swap]{q}
\end{tikzcd} \]
to the source and target maps.
\item\label{induce sections} The common section of part \ref{common section} induces sections
\[ \begin{tikzcd}
(F(\D) \da_n \C) \arrow{r}[swap]{s} & \D \arrow[dashed, bend right, out=-40]{l}[swap]{q}
\end{tikzcd} \]
and
\[ \begin{tikzcd}
(\C \da_n G(\E)) \arrow{r}[swap]{t} & \E . \arrow[dashed, bend right, out=-40]{l}[swap]{q}
\end{tikzcd} \]
\item\label{sections are htpy equivces} The section diagrams of part \ref{induce sections} (and in particular, those of part \ref{common section}) define homotopy equivalences in $(\Cati)_\Thomason$.
\end{enumerate}
\end{prop}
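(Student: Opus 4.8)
The plan is to dispatch parts \ref{common section} and \ref{induce sections} by formal nonsense and then reduce part \ref{sections are htpy equivces} to a contractibility statement about the fibers of the fibrations of \cref{phpb gives fibns}. For part \ref{common section}: since $(\C \da_n \C) = \Fun(\word{z}_n , \C)$ and the maps $s,t$ are restriction along $\{s\},\{t\} \hookra \word{z}_n$, and since $\{s\}$, $\{t\}$, and $\pt_\Cati$ are all terminal in $\Cati$, I would take $q$ to be restriction along the unique functor $\word{z}_n \ra \pt_\Cati$; this gives $q : \C = \Fun(\pt_\Cati,\C) \ra \Fun(\word{z}_n,\C) = (\C \da_n \C)$ with $s \circ q = \id_\C = t \circ q$. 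For part \ref{induce sections}: by \cref{define phpb} one has $(F(\D) \da_n \C) \simeq \D \times_{F,\C,s} (\C \da_n \C)$ and $(\C \da_n G(\E)) \simeq (\C \da_n \C) \times_{t,\C,G} \E$, and since sections are stable under base change, pulling $q$ back along $F$ (resp.\ $G$) yields the two asserted sections.

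The content is part \ref{sections are htpy equivces}. First, in each of the three section diagrams the map landing in $\D$, $\E$, or $\C$ is split by $q$, so by two-out-of-three it suffices to show that \emph{that} map lies in $\bW_\Thomason$, after which $q$ does too and the two become mutually inverse in the homotopy category of $(\Cati)_\Thomason$. Second, by \cref{phpb gives fibns} and \cref{notn phpb as fctr to Cati}, each such map is a co/cartesian fibration all of whose fibers are of the form $(x \da_n \C)$ or $(\C \da_n x)$ for objects $x$ of $\C$ (e.g.\ the fiber of $(F(\D) \da_n \C) \xra{s} \D$ over $d$ is $(F(d) \da_n \C)$). So everything reduces to the claim that $(x \da_n \C)$ and $(\C \da_n x)$ are \emph{Thomason-contractible}, i.e.\ have groupoid completion $\pt_\S$, for every object $x$. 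Granting this, any of the classifying functors in sight has property {\propQ} -- a map between Thomason-contractible $\infty$-categories induces $\pt_\S \ra \pt_\S$ on groupoid completions, hence lies in $\bW_\Thomason$ -- and so \cref{groupoid-completion of the grothendieck construction} computes, for instance,
\[ (F(\D) \da_n \C)^\gpd \simeq \colim_{\D^\oddop} \left( (-)^\gpd \circ (F(-) \da_n \C) \right) \simeq \colim_{\D^\oddop} \const(\pt_\S) \simeq (\D^\oddop)^\gpd \simeq \D^\gpd , \]
the middle step because a functor into $\S$ with Thomason-contractible values is equivalent to the constant one at $\pt_\S$; checking that this composite is $s^\gpd$ shows $s \in \bW_\Thomason$. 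The other two diagrams are handled the same way, passing to the contravariant Grothendieck construction when the relevant fibration is cartesian rather than cocartesian -- the bookkeeping recorded by the $\evendash{-}\odddash$ notation of \cref{half-open interval notn}.

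To prove the Thomason-contractibility claim I would induct on $n$. For $n = 1$ one has $(x \da_1 \C) \simeq \C_{x/}$ and $(\C \da_1 x) \simeq \C_{/x}$ (recall \cref{slice cats are l/r fibns}), which have an initial, resp.\ terminal, object and so are Thomason-contractible by (the dual of) \cref{groupoid-completion of infty-cat with terminal object is contractible}. For $n \geq 2$, I would present $\word{z}_n$ as obtained from a copy of $\word{z}_{n-1}$ by freely adjoining its last arrow -- i.e.\ as a pushout of $\word{z}_{n-1}$ with $[1]$ over $\pt_\Cati$ -- so that $\Fun(-,\C)$ turns this into a pullback and, after restricting to the fiber over $x$, exhibits $(x \da_n \C)$ as the total space of a co/cartesian fibration over $(x \da_{n-1} \C)$, namely the base change of $\ev_0 : \Fun([1],\C) \ra \C$, all of whose fibers are coslices $\C_{w/}$ and hence Thomason-contractible. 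Another application of \cref{groupoid-completion of the grothendieck construction} then gives $(x \da_n \C)^\gpd \simeq (x \da_{n-1} \C)^\gpd \simeq \pt_\S$ by induction; the case of $(\C \da_n x)$ is dual, peeling off the first arrow of $\word{z}_n$ instead.

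The only step with real content is this last induction. The point to be careful about is that the decomposition $\word{z}_n \simeq \word{z}_{n-1} \coprod_{\pt_\Cati} [1]$ really is a pushout in $\Cati$ (so that $\Fun(-,\C)$ converts it to the needed pullback), which holds because one is freely adjoining an arrow at an endpoint, together with tracking the variance -- cocartesian versus cartesian, according to the parity of $n$ -- consistently throughout; everything else is formal manipulation built on \cref{groupoid-completion of the grothendieck construction} and \cref{phpb gives fibns}.
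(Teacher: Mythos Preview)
Your treatment of parts \ref{common section} and \ref{induce sections} is fine and matches the paper. For part \ref{sections are htpy equivces}, your approach is genuinely different from the paper's and is essentially sound, but the induction as written does not close.

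The paper argues directly at the level of the indexing category: either composite $\word{z}_n \ra \pt_\Cati \rra \word{z}_n$ is connected to $\id_{\word{z}_n}$ by an explicit zigzag of natural transformations, and this zigzag can be taken relative to the chosen endpoint $s$ (resp.\ $t$). Applying $\Fun(-,\C)$ and then pulling back along $F$ (resp.\ $G$) yields a zigzag of natural transformations between $q \circ s$ (resp.\ $q \circ t$) and the identity; the result then follows from the fact that natural transformations become equivalences upon groupoid completion. This is uniform in $n$ and requires no induction or appeal to \cref{groupoid-completion of the grothendieck construction}.

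Your route --- reduce via two-out-of-three to showing $s$ (resp.\ $t$) lies in $\bW_\Thomason$, then use \cref{phpb gives fibns} and \cref{groupoid-completion of the grothendieck construction} to reduce to Thomason-contractibility of the fibers $(x \da_n \C)$ and $(\C \da_n x)$ --- is a perfectly good alternative, and nicely illustrates the machinery of \cref{section gr and colims}. The problem is in your inductive step. Peeling the last arrow off $\word{z}_n$ yields the full subcategory $\word{z}_n' \subset \word{z}_n$ on all objects but $t$, and this is \emph{not} $\word{z}_{n-1}$: the direction of the first arrow of $\word{z}_n'$ agrees with that of $\word{z}_n$, which is opposite to that of $\word{z}_{n-1}$. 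For instance $\word{z}_3' = (s \ra \bullet \la \bullet)$, whereas $\word{z}_2 = (s \la \bullet \ra \bullet)$; so the base of your fibration, $\{x\} \times_{\C,\ev_s} \Fun(\word{z}_3',\C)$, is the $\infty$-category of diagrams $x \ra a \la b$, which is neither $(x \da_2 \C)$ nor $(\C \da_2 x)$. Your induction hypothesis therefore does not apply.

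The fix is easy: strengthen the hypothesis to ``for \emph{every} finite zigzag category $Z$ with a distinguished endpoint $s$, the $\infty$-category $\{x\} \times_{\C,\ev_s} \Fun(Z,\C)$ is Thomason-contractible'', and induct on the number of arrows of $Z$. Peeling off the last arrow then always lands you in a strictly shorter zigzag (with fibers a slice or coslice according to that arrow's direction), and the argument goes through. With that amendment your proof is correct; what you lose relative to the paper's argument is only the explicit zigzag of homotopies, which the paper's later use of this proposition does not actually require.
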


\begin{proof}
Part \ref{common section} follows from the fact that both composites $\pt_\Cati \rra \word{z}_n \ra \pt_\Cati$ are canonically equivalent to $\id_{\pt_\Cati}$.  Then, part \ref{induce sections} follows from part \ref{common section} and the definitions of $(F(\D) \da_n \C)$ and $(\C \da_n G(\E))$ as pullbacks.  For part \ref{sections are htpy equivces}, observe first that either composite $\word{z}_n \ra \pt_\Cati \rra \word{z}_n$ is connected by a zigzag of natural transformations to $\id_{\word{z}_n}$; moreover, working with $\pt_\Cati \xra{s} \word{z}_n$ this zigzag can be taken in $\strcat_*$ where we point our categories using their source objects (i.e.\! such that all the constituent natural transformations of the zigzag have the map $\id_s$ as their component at $s$), and similarly for working with $\pt_\Cati \xra{t} \word{z}_n$.  By applying \cref{hammocks:nat w.e. induces nat trans} (where we take $\C$ to be equipped with the maximal relative structure), we see that either composite $(\C \da_n \C) \rra \C \ra (\C \da_n \C)$ is in turn connected to $\id_{(\C \da_n \C)}$ by a zigzag of natural transformations, such that all of the constituent natural transformations commute with the chosen projection $(\C \da_n \C) \ra \C$ (either the source or target map).  Since the functor $\Cati \xra{- \times [1]} \Cati$ commutes with pullbacks (being a limit), by the functoriality of pullbacks these induce zigzags of natural transformations between the composite
\[ (F(\D) \da_n \C) \xra{s} \D \ra (F(\D) \da_n \C) \]
and $\id_{(F(\D) \da_n \C)}$ and between the composite
\[ (\C \da_n G(\E)) \xra{t} \E \ra (\C \da_n G(\E)) \]
and $\id_{(\C \da_n G(\E))}$.  Thus, the claim follows from \cref{rnerves:nat trans induces equivce betw maps on gpd-complns}.
\end{proof}

We now define the key concept of this subsection.

\begin{defn}
We say that a functor $\D \xra{F} \C$ has \bit{property {\Bnbit}} if the functor
\[ \C \xra{(F(\D) \da_n - )} \Cati \]
has property {\propQ}.
\end{defn}

We now give the main result of this subsection, which we refer to as \bit{Theorem {\Bnbit}} (for homotopy pullbacks (in $(\Cati)_\Thomason$)).

\begin{thm}\label{thm Bn}
If the functor $\D \xra{F} \C$ has property {\Bn}, then for any functor $\E \xra{G} \C$, the (not generally commutative) square
\[ \begin{tikzcd}
(F(\D) \da_n G(\E)) \arrow{r}{s} \arrow{d}[swap]{t} & \D \arrow{d}{F} \\
\E \arrow{r}[swap]{G} & \C
\end{tikzcd} \]
is a homotopy pullback square in $(\Cati)_\Thomason$, i.e.\! it induces a (commutative) pullback square
\[ \begin{tikzcd}
(F(\D) \da_n G(\E))^\gpd \arrow{r}{s^\gpd} \arrow{d}[swap]{t^\gpd} & \D^\gpd \arrow{d}{F^\gpd} \\
\E^\gpd \arrow{r}[swap]{G^\gpd} & \C^\gpd
\end{tikzcd} \]
in $\S$.
\end{thm}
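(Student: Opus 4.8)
The plan is to reduce \cref{thm Bn} to Corollary \ref{pullback of cocart fibn} (the parametrized form of Lemma {\propQ}), using the constant-diagram sections of \cref{common sections and htpy equivces} to trade $(F(\D) \da_n \C)$ for $\D$.

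First I would unwind the notation of \cref{define phpb}, \cref{phpb gives fibns}, and \cref{notn phpb as fctr to Cati}. Specializing \cref{phpb gives fibns}\ref{full phpb is fibn} to $G = \id_\C$, the target map exhibits $(F(\D) \da_n \C) \xra{t} \C$ as a cocartesian fibration classified by $\C \xra{(F(\D) \da_n -)} \Cati$, so that $(F(\D) \da_n \C) \simeq \Gr \left( \C \xra{(F(\D) \da_n -)} \Cati \right)$ in $\coCartFib(\C)$. Chasing the grid of pullback squares in \cref{define phpb} then identifies $(F(\D) \da_n G(\E))$ with the pullback of the cospan $(F(\D) \da_n \C) \xra{t} \C \xla{G} \E$, in such a way that the map $t$ of the theorem is the projection to $\E$ while the map $s$ of the theorem is the projection to $(F(\D) \da_n \C)$ followed by its source map $(F(\D) \da_n \C) \xra{s} \D$. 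By the naturality of the Grothendieck construction this pullback is $\Gr \left( \C \xra{(F(\D) \da_n -)} \Cati \right)$ pulled back along $G$; hence the hypothesis that $F$ has property {\Bn} --- that is, that $\C \xra{(F(\D) \da_n -)} \Cati$ has property {\propQ} --- lets me invoke Corollary \ref{pullback of cocart fibn} to conclude that
\[ \begin{tikzcd}
(F(\D) \da_n G(\E)) \arrow{r}{\pr} \arrow{d}[swap]{t} & (F(\D) \da_n \C) \arrow{d}{t} \\
\E \arrow{r}[swap]{G} & \C
\end{tikzcd} \]
is a homotopy pullback square in $(\Cati)_\Thomason$, i.e.\! becomes a pullback square in $\S$ after applying $(-)^\gpd$.

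Next I would bring in \cref{common sections and htpy equivces}: the common-section construction supplies a map $q \colon \D \ra (F(\D) \da_n \C)$ with $s \circ q = \id_\D$, which is a homotopy equivalence in $(\Cati)_\Thomason$ by part \ref{sections are htpy equivces}. Since $q$ carries an object $d \in \D$ to the constant $\word{z}_n$-diagram at $F(d)$, it satisfies $t \circ q = F$. It follows that upon applying $(-)^\gpd$ the map $q^\gpd \colon \D^\gpd \ra (F(\D) \da_n \C)^\gpd$ is an equivalence with two-sided inverse $s^\gpd$ (from $s \circ q = \id_\D$ and the fact that $q$ is a Thomason equivalence), and that $F^\gpd \circ s^\gpd = t^\gpd$ (from $t \circ q = F$ and $q^\gpd \circ s^\gpd = \id$). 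Replacing the upper-right corner $(F(\D) \da_n \C)^\gpd$ of the pullback square just obtained by $\D^\gpd$ along $s^\gpd$ --- which sends the right vertical map $t^\gpd$ to $F^\gpd$ and the top horizontal map $\pr^\gpd$ to $s^\gpd$ --- therefore yields the pullback square
\[ \begin{tikzcd}
(F(\D) \da_n G(\E))^\gpd \arrow{r}{s^\gpd} \arrow{d}[swap]{t^\gpd} & \D^\gpd \arrow{d}{F^\gpd} \\
\E^\gpd \arrow{r}[swap]{G^\gpd} & \C^\gpd
\end{tikzcd} \]
in $\S$ demanded by the theorem; in particular this also shows that the a priori noncommutative square of the theorem becomes commutative after groupoid completion (alternatively, this commutativity can be seen directly by pulling back the zigzag of natural transformations in $\Fun(\word{z}_n,\C)$ connecting $\ev_s$ to $\ev_t$ and invoking \cref{rnerves:nat trans induces equivce betw maps on gpd-complns}).

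I expect the only genuine friction to be bookkeeping: correctly matching up the many maps named $s$ and $t$ throughout the grid of \cref{define phpb}, confirming that the pullback produced by Corollary \ref{pullback of cocart fibn} is literally $(F(\D) \da_n G(\E))$ and not merely something equivalent to it, and verifying the identity $t \circ q = F$ so that replacing $(F(\D) \da_n \C)$ by $\D$ inside the homotopy pullback is compatible with the structure maps down to $\C$. None of this is deep; the substantive input --- that property {\propQ} propagates through pullbacks of the associated cocartesian fibration --- has already been isolated as Corollary \ref{pullback of cocart fibn}.
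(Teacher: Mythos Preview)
Your proof is correct and follows essentially the same route as the paper: identify $(F(\D) \da_n G(\E))$ as the pullback of the cocartesian fibration $(F(\D) \da_n \C) \xra{t} \C$ along $G$, use property {\propQ} to see this is a homotopy pullback, and then trade $(F(\D) \da_n \C)$ for $\D$ via the section $q$ of \cref{common sections and htpy equivces}. The only difference is that you invoke \cref{pullback of cocart fibn} directly, whereas the paper instead applies Lemma {\propQ} to the two fiber inclusions (over $c \in \C$ and $e \in \E$) and then checks the square is a homotopy pullback by comparing fibers over points of $\E^\gpd$ --- but this fiberwise check is precisely how \cref{pullback of cocart fibn} itself is proved, so your version is a mild streamlining of the same argument.
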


\begin{proof}
To say that $\D \xra{F} \C$ has property {\Bn} is to say that the functor
\[ \C \xra{(F(\D) \da_n - )} \Cati \]
has property {\propQ}.  By the naturality of the Grothendieck construction, the functor
\[ \E \xra{(F(\D) \da_n G(-))} \Cati \]
is precisely the composite
\[ \E \xra{G} \C \xra{(F(\D) \da_n - )} \Cati , \]
and therefore also has property {\propQ}.  Hence, by Lemma {\propQ} (\ref{quillen's lemma}), for any objects $c \in \C$ and $e \in \E$ the fiber inclusions
\[ \begin{tikzcd}
(F(\D) \da_n c) \arrow{r} \arrow{d} & (F(\D) \da_n \C) \arrow{d}{t} \\
\{ c \} \arrow[hook]{r} & \C
\end{tikzcd} \]
and
\[ \begin{tikzcd}
(F(\D) \da_n G(e)) \arrow{r} \arrow{d} & (F(\D) \da_n G(\E)) \arrow{d}{t} \\
\{ e \} \arrow{r} & \E
\end{tikzcd} \]
are both homotopy pullback squares in $(\Cati)_\Thomason$.

Now, observe that we have a diagram
\[ \begin{tikzcd}
(F(\D) \da_n G(\E)) \arrow{r} \arrow[bend left]{rr}{s} \arrow{d}[swap]{t} & (F(\D) \da_n \C) \arrow{d}[swap]{t} \arrow{r}{s} & \D \arrow[bend left, out=50]{l}{q}[swap, transform canvas={yshift=+0.7ex}]{\approx} \arrow[bend left]{ld}{F} \\
\E \arrow{r}[swap]{G} & \C
\end{tikzcd} \]
in $(\Cati)_\Thomason$, in which
\begin{itemize}
\item the map labeled $q$ comes from \cref{common sections and htpy equivces}\ref{induce sections}, 
\item every bounded connected region is commutative except for the one containing the symbol $\approx$, which is homotopy commutative (and is hence bounded by weak equivalences in $(\Cati)_\Thomason$) by \cref{common sections and htpy equivces}\ref{sections are htpy equivces}, and
\item the square is by definition a pullback square in $\Cati$.
\end{itemize}
Our goal, then, reduces to showing that the commutative square in this diagram is also a homotopy pullback square in $(\Cati)_\Thomason$.  For this, it suffices to verify that in the induced commutative square
\[ \begin{tikzcd}
(F(\D) \da_n G(\E))^\gpd \arrow{r} \arrow{d}[swap]{t^\gpd} & (F(\D) \da_n \C)^\gpd \arrow{d}{t^\gpd} \\
\E^\gpd \arrow{r}[swap]{G^\gpd} & \C^\gpd
\end{tikzcd} \]
in $\S$, we obtain an equivalence on fibers over every point of $\E^\gpd$.

Now, observe first that any point $\pt_\S \ra \E^\gpd$ is represented by a map $\pt_\Cati \xra{e} \E$ in $(\Cati)_\Thomason$.  Moreover, we have just seen that in the resulting commutative diagram
\[ \begin{tikzcd}
(F(\D) \da_n G(e)) \arrow{r} \arrow{d} & (F(\D) \da_n G(\E)) \arrow{r} \arrow{d}[swap]{t} & (F(\D) \da_n \C) \arrow{d}{t} \\
\{ e \} \arrow[hook]{r} & \E \arrow{r}[swap]{G} & \C
\end{tikzcd} \]
in $\Cati$, both the left square and the outer rectangle are fiber inclusions which are moreover homotopy pullback squares in $(\Cati)_\Thomason$ (where we take $c = G(e) \in \C$).  So we do indeed obtain an equivalence on fibers over every point of $\E^\gpd$ in the above commutative square in $\S$, which completes the proof.
\end{proof}

\begin{rem}
As \cite[Theorem {\Bn} (6.2)]{DKS-hocommdiag} specializes to \cite[Theorem B]{QuillenAKTI} in the case that $n=1$, our Theorem {\Bn} (\ref{thm Bn}) also generalizes \cite[Theorem B for $\infty$-categories (5.3)]{Barwick-Q}.
\end{rem}

The following definition allows us to formulate a useful sufficient condition for a functor to have property {\Bn}.

\begin{defn}
We say that $\C \in \Cati$ has \bit{property {\Cnbit}} if every functor $\pt_\Cati \ra \C$ has property {\Bn}.
\end{defn}

The sufficient condition is then provided by the following main supporting result of this section, which we refer to as \bit{Theorem {\Cnbit}}.

\begin{thm}\label{thm Cn}
If $\C \in \Cati$ has property {\Cn}, then any functor $\D \xra{F} \C$ has property {\Bn}.
\end{thm}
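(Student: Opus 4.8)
The plan is to unwind property {\Bn} for $F$ into a statement about Thomason weak equivalences, recognize the relevant maps as Grothendieck constructions over $\D$ of natural transformations, and then feed property {\Cn} into \cref{natural thomason weak equivalence induces a thomason weak equivalence on grothendieck constructions}.

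By definition, $F$ has property {\Bn} precisely when the functor $\C \xra{(F(\D) \da_n -)} \Cati$ of \cref{notn phpb as fctr to Cati} has property {\propQ}, i.e.\ carries every morphism $c_0 \xra{\varphi} c_1$ of $\C$ to a weak equivalence $(F(\D) \da_n c_0) \ra (F(\D) \da_n c_1)$ in $(\Cati)_\Thomason$.  So I would fix such a $\varphi$ and analyze this map.  By \cref{phpb gives fibns} and \cref{lift to fctrs to fibns over other}, the functor $\C \xra{(F(\D) \da_n -)} \Cati$ lifts to a functor $\C \ra \evencoCartFib(\D)$, sending $c$ to the fibration $(F(\D) \da_n c) \xra{s} \D$; under the equivalence $\Fun(\D^\oddop, \Cati) \xra{\sim} \evencoCartFib(\D)$ this lift is classified by a functor $\C \ra \Fun(\D^\oddop, \Cati)$ whose value at $c$ is the functor $\D^\oddop \xra{(F(-) \da_n c)} \Cati$, and composing back down to $\Cati$ amounts to applying $\Gr$ (resp.\ $\Grop$, according to the parity of $n$).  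Consequently the map $(F(\D) \da_n c_0) \ra (F(\D) \da_n c_1)$ in question is obtained by applying $\Gr$ (resp.\ $\Grop$) to a natural transformation $(F(-) \da_n c_0) \Rightarrow (F(-) \da_n c_1)$ in $\Fun(\D^\oddop, \Cati)$.

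Now the hypothesis enters.  The component of this natural transformation at an object $d \in \D$ is the value on $\varphi$ of the functor $\C \xra{(F(d) \da_n -)} \Cati$; unwinding \cref{define phpb}, this functor coincides with the one assigned by \cref{notn phpb as fctr to Cati} to the functor $\pt_\Cati \xra{F(d)} \C$ --- both classify the cocartesian fibration obtained by pulling $(\C \da_n \C) \xra{s} \C$ back along $F(d)$ and then remembering the target projection $t$.  Hence, since $\C$ has property {\Cn}, this functor has property {\propQ}, so $(F(d) \da_n c_0) \ra (F(d) \da_n c_1)$ is a weak equivalence in $(\Cati)_\Thomason$ --- for every $d \in \D$.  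In other words, $(F(-) \da_n c_0) \Rightarrow (F(-) \da_n c_1)$ is a natural weak equivalence of functors $\D^\oddop \rra (\Cati)_\Thomason$, so \cref{natural thomason weak equivalence induces a thomason weak equivalence on grothendieck constructions} (or its dual, when $n$ is odd) shows that $(F(\D) \da_n c_0) \ra (F(\D) \da_n c_1)$ is a weak equivalence in $(\Cati)_\Thomason$.  As $\varphi$ was arbitrary, the functor $\C \xra{(F(\D) \da_n -)} \Cati$ factors through $\bW_\Thomason \subset \Cati$, i.e.\ $F$ has property {\Bn}.

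The main obstacle is bookkeeping rather than anything conceptual: one must keep straight the various flavours of potential homotopy pullback $\infty$-category together with their two projections $s$ and $t$, carry the even/odd dichotomy correctly through $\evencoCartFib$, $\D^\oddop$, and the choice between $\Gr$ and $\Grop$, and --- the one genuinely load-bearing point --- check that $\C \xra{(F(d) \da_n -)} \Cati$ really is the functor witnessing property {\Bn} for the functor $\pt_\Cati \xra{F(d)} \C$, which is exactly where property {\Cn} gets used.
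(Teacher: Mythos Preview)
Your proposal is correct and follows essentially the same route as the paper's own proof: fix a morphism $\varphi$ in $\C$, use the lift of $(F(\D)\da_n -)$ to $\evencoCartFib(\D)$ to recognize $(F(\D)\da_n c_0)\to(F(\D)\da_n c_1)$ as the (contra/co)variant Grothendieck construction of a natural transformation $(F(-)\da_n c_0)\Rightarrow(F(-)\da_n c_1)$, and then invoke property {\Cn} on each $\pt_\Cati\xra{F(d)}\C$ together with \cref{natural thomason weak equivalence induces a thomason weak equivalence on grothendieck constructions}. The only difference is cosmetic: you spell out the even/odd bookkeeping and the identification of $(F(d)\da_n -)$ with the property-{\Bn} functor for $\pt_\Cati\xra{F(d)}\C$ a bit more explicitly than the paper does.
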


\begin{proof}
We must show that for any map $c_1 \xra{\varphi} c_2$ in $\C$, the resulting functor
\[ (F(\D) \da_n c_1) \ra (F(\D) \da_n c_2) \]
is in $\bW_\Thomason \subset \Cati$.  Recall from \cref{lift to fctrs to fibns over other} that this functor can be considered as the image of the map $\varphi$ under a functor
\[ \C \xra{(F(\D) \da_n - )} \evencoCartFib(\D) . \]
Via the Grothendieck construction
\[ \Fun(\D^\oddop,\Cati) \xra[\sim]{\Gr} \evencoCartFib(\D) \]
this is classified by a natural transformation
\[ (F(-) \da_n c_1) \xra{\Gr^{-1}((F(\D) \da_n \varphi))} (F(-) \da_n c_2) \]
in $\Fun(\D^\oddop , \Cati)$, and by \cref{natural thomason weak equivalence induces a thomason weak equivalence on grothendieck constructions} it suffices to show that the components of this natural transformation lie in $\bW_\Thomason \subset \Cati$.  This is just the assertion that for any object $d \in \D$, the induced map
\[ (F(d) \da_n c_1) \ra (F(d) \da_n c_2) \]
is in $\bW_\Thomason \subset \Cati$, which follows by applying the definition of property {\Cn} to the functor $\pt_\Cati \xra{F(d)} \C$.
\end{proof}

\section{The Bousfield--Kan colimit formula}\label{section bousfield--kan}

In 1-category theory, there's an extremely useful formula which expresses an arbitrary colimit as a coequalizer of maps between coproducts (at least when the ambient category is cocomplete).  Namely, if we are given $\C , \D \in \strcat$, $\C$ admits coproducts and coequalizers, and $\D \xra{F} \C$ is any functor, then we have an isomorphism
\[ \colim_\D F \cong \coeq \left( \coprod_{(d_1 \ra d_2) \in \Nerve(\D)_1} F(d_1) \rra \coprod_{d \in \Nerve(\D)_0} F(d) \right) \]
in $\C$, where on the summand corresponding to $(d_1 \xra{\varphi} d_2) \in \Nerve(\D)_1$, one map is given by
\[ F(d_1) \xra{\id_{F(d_1)}} F(d_1) \]
(landing on the summand corresponding to $d_1 \in \Nerve(\D)_0$) and the other map is given by
\[ F(d_1) \xra{F(\varphi)} F(d_2) \]
(landing on the summand corresponding to $d_2 \in \Nerve(\D)_0$).  (In fact, it follows from this formula that $\C$ is cocomplete if (and only if) it admits coproducts and coequalizers.)

In this section, we generalize this colimit formula to $\infty$-categories.  Two things must be changed.  First of all, this coequalizer will be replaced by a geometric realization.\footnote{Recall that $\bD^{op}_{\leq 1}$ is the walking reflexive pair, so that colimits over it are precisely reflexive coequalizers.  In fact, the above pair of parallel arrows in $\C$ is indeed a reflexive pair: a common section is given by taking the summand $F(d)$ over the element $d \in \Nerve(\D)_0$ to summand $F(d)$ over the element $\id_d \in \Nerve(\D)_1$ via the identity map $\id_{F(d)}$ in $\C$.}  Moreover, the coproducts over the \textit{sets} of objects and morphisms of $\D$ will be replaced by the colimits over the \textit{spaces} of objects, morphisms, pairs of two composable morphisms, etc., of the diagram $\infty$-category -- in other words, over the constituents of its $\infty$-categorical nerve.  To be more precise, the \textit{simplicial replacement} of a diagram $\D \xra{F} \C$ in $\C$ will be a simplicial object $\srep(F)_\bullet \in s\C$ which in level $n$ is the colimit of the composite
\[ \Nervei(\D)_n \xra{\{0\}} \Nervei(\D)_0 \simeq \D^\simeq \hookra \D \xra{F} \C \]
(which of course only need exist when $\C$ has a sufficient supply of colimits).  Then, the geometric realization of the simplicial replacement will compute the colimit $\colim_\D F$ of the original diagram in $\C$.

This section is organized as follows.  In \cref{subsection BK formula}, we carefully construct the simplicial replacement and prove the \textit{Bousfield--Kan colimit formula} (\cref{bousfield--kan}).  In \cref{subsection examples of BK formula}, we provide some examples to illustrate the usage of this formula.  Finally, in \cref{subsection functoriality of BK formula}, we provide a functoriality result.

\begin{rem}
Of course, one can dualize this entire section to obtain analogous constructions and results concerning limits in a complete $\infty$-category.
\end{rem}

\subsection{The Bousfield--Kan colimit formula}\label{subsection BK formula}

In this subsection, we construct the simplicial replacement and prove the main result of this section.  We begin with the following observation.

\begin{rem}\label{rem lax nat trans for srep}
The structure maps of the diagrams in $\C$ assembling to the simplicial replacement are not strictly compatible: we will only have a lax natural transformation $\Nervei(\D)_\bullet \laxra \const(\D)$ in $\Fun(\bD^{op},\Cati)$, i.e.\! a map
\[ \begin{tikzcd}
\Gr(\Nervei(\D)_\bullet) \arrow[dashed]{rr} \arrow{rd} & & \D \times \bD^{op} \arrow{ld} \\
& \bD^{op}
\end{tikzcd} \]
making the triangle commute, which by precomposition will induce the passage to the simplicial replacement.  For instance, a point $\varphi \in \Nervei(\D)_1$ corresponds to a morphism $x \xra{\varphi} y$ in $\D$, and the above map should send this to the point $x \in \D \times \{ [1]^\opobj \}$.  On the other hand, the two simplicial structure maps take the point $\varphi \in \Nervei(\D)_1$ to the points $x,y \in \Nervei(\D)_0$, and the map $\Nervei(\D)_0 \ra \D \times \{ [0]^\opobj \}$ is simply the inclusion of the maximal subgroupoid.  So in the diagram
\[ \begin{tikzcd}
\Nervei(\D)_1 \arrow{r}{\{0\}} \arrow{d}[swap]{\delta_0} & \D \arrow{d}{\delta_0}[sloped, anchor=north]{\sim} \\
\Nervei(\D)_0 \arrow{r}[swap]{\{0\}} & \D ,
\end{tikzcd} \]
going across and then down gives $\varphi \mapsto x \mapsto x$, while going down and then across gives $\varphi \mapsto y \mapsto y$.  Hence, the diagram does not strictly commute.  However, it will commute \textit{up to a natural transformation} (running down and to the left), whose component at the point $\varphi \in \Nervei(\D)_1$ is simply the map $\varphi$ itself.
\end{rem}

\begin{rem}
In light of \cref{rem lax nat trans for srep}, one sees that the ``decomposition of colimits'' results of \sec T.4.2.3 do not suffice for our purposes here: they only apply to \textit{strict} diagrams of diagram $\infty$-categories lying over our diagram $\infty$-category $\D$.
\end{rem}

We now construct the lax natural transformation of \cref{rem lax nat trans for srep}.  In fact, we will construct it in a way which is functorial in $\D \in \Cati$.\footnote{This is essentially a homotopy-invariant analog of the ``first vertex projection'' from the opposite of the category of simplices of a quasicategory.  The main difficulty lies in keeping careful track of all coherence data.}

\begin{constr}\label{construct pre-srep map}
We construct the map $\Gr(\Nervei(\D)_\bullet) \ra \D \times \bD^{op}$ in $(\Cati)_{/\bD^{op}}$ as follows.  By the universal property of products, it suffices to construct only the map $\Gr(\Nervei(\D)_\bullet) \ra \D$ in $\Cati$.  This we construct on nerves, i.e.\! as a map $\Nervei(\Gr(\Nervei(\D)_\bullet))_\bullet \ra \Nervei(\D)_\bullet$ in $\CSS \subset s\S$.  For this, we will unwind the definitions of these two constructions as functors of $\D \in \Cati$.

Of course, the target is corepresented in level $n$ by $[n] \in \bD \subset \Cati$.

On the other hand, every string of composable morphisms in $\Gr(\Nervei(\D)_\bullet)$ is uniquely determined by its source and its image in $\bD^{op}$ (since the functor $\Gr(\Nervei(\D)_\bullet) \ra \bD^{op}$ is a left fibration and hence all maps are cocartesian), and so we obtain an equivalence
\[ \Nervei(\Gr(\Nervei(\D)_\bullet))_n \simeq \coprod_{\alpha \in \Nerve(\bD^{op})_n} \Nervei(\D)_{\alpha(0)} , \]
where we consider $\alpha \in \Nerve(\bD^{op})_n \cong \hom_\strcatsup([n],\bD^{op})$.  Hence, the composite functor
\[ \Cati \xra{\Nervei(-)_\bullet} s\S \simeq \Fun(\bD^{op},\S) \xra{\Gr} \LFib(\bD^{op}) \xra{\forget_{\L(\bD^{op})}} (\Cati)_{/\bD^{op}} \ra \Cati \xra{\Nervei(-)_\bullet} s\S , \]
considered by adjunction as a simplicial object in $\Fun(\Cati,\S)$, is given in each level by a coproduct of corepresentable objects.  In level $n$, it is given by
\[ \coprod_{\alpha \in \Nerve(\bD^{op})_n} \Yo(\alpha(0)^\opobj) , \]
where we write $\Yo = \Yo_{(\Cati)^{op}}$ for the contravariant Yoneda functor
\[ (\Cati)^{op} \xra{\hom_\Cati(-,=)} \Fun(\Cati,\S) \]
for brevity.
To describe its simplicial structure maps, given a map $[n] \xra{\alpha} \bD^{op}$, for any $0 \leq i \leq j \leq n$ let us denote the corresponding map in $\bD^{op}$ selected by $\alpha$ by
\[ \alpha(i) \xra{\alpha_{i,j}^\opobj} \alpha(j) \]
(i.e.\! the image under $\alpha$ of the unique element of $\hom_{[n]}(i,j)$).  Then, associated to a map $[n]^\opobj \xra{\varphi^\opobj} [m]^\opobj$ in $\bD^{op}$, the structure map
\[ \left( \coprod_{\alpha \in \Nerve(\bD^{op})_n} \Yo(\alpha(0)^\opobj) \right) \ra \left( \coprod_{\beta \in \Nerve(\bD^{op})_m} \Yo(\beta(0)^\opobj) \right) \]
of this simplicial object in $\Fun(\Cati,\S)$ is given by taking the summand $\Yo(\alpha(0)^\opobj)$ indexed by $\alpha \in \hom_\strcatsup([n],\bD^{op})$ to the summand $\Yo(\beta(0)^\opobj)$ indexed by $\beta = \alpha \circ \varphi \in \hom_\strcatsup([m],\bD^{op})$ via the map
\[ \Yo(\alpha(0)^\opobj) \xra{\Yo(\alpha_{0,\varphi(0)}^\opobj)} \Yo(\alpha(\varphi(0))^\opobj) \]
in $\Fun(\Cati,\S)$.  Since the objects of $\Fun(\Cati,\S)$ corepresented by gaunt categories and their finite coproducts generate a full subcategory which is just a 1-category, no higher coherence issues arise.  (We will implicitly appeal to this fact for our further manipulations as well.)

We can now describe our desired map $\Nervei(\Gr(\Nervei(\D)_\bullet))_\bullet \ra \Nervei(\D)_\bullet$ in $\CSS \subset s\S$.  In level $n$, it is obtained from the map
\[ \left( \coprod_{\alpha \in \Nerve(\bD^{op})_n} \Yo(\alpha(0)^\opobj) \right) \ra \Yo([n]^\opobj) \]
which on the summand $\alpha \in \Nerve(\bD^{op})_n$ is the map
\[ \Yo \left( \alpha(0)^\opobj \xra{ ( ( i \mapsto \alpha_{0,i}(0))_{0 \leq i \leq n})^\opobj} [n]^\opobj \right) \]
in $\Fun(\Cati,\S)$.  That is, in level $n$, on the summand $\alpha$ it is corepresented by the map $[n] \ra \alpha(0)^\opobj$ in $\bD \subset \Cati$ given by $i \mapsto \alpha_{0,i}(0)$, i.e.\! the map taking $i \in [n]$ to the image in $\alpha(0)^\opobj$ of the object $0 \in \alpha(i)^\opobj$ under the composite
\[ \alpha(0)^\opobj \xla{\alpha_{0,1}} \cdots \xla{\alpha_{i-1,i}} \alpha(i)^\opobj \]
in $\bD$.

We have now associated to the object $\D \in \Cati$ a map $\Nervei(\Gr(\Nervei(\D)_\bullet)) \ra \Nervei(\D)$ in $\CSS$.  In fact, this map clearly commutes with the induced projections to $\Nervei(\bD^{op})$, and moreover, this association is clearly functorial in $\D \in \Cati$ since it is entirely corepresented.  Hence, we obtain a functor
\[ \Cati \ra \Fun([1],\CSS_{/\Nervei(\bD^{op})}) , \]
which immediately (and equivalently) produces our desired functor
\[ \Cati \ra \Fun([1],(\Cati)_{/\bD^{op}}) \]
which takes the object $ \D \in \Cati$ to a commutative triangle as depicted in \cref{rem lax nat trans for srep} (considered as an object of $\Fun([1],(\Cati)_{/\bD^{op}})$).
\end{constr}

\begin{ex}
To illustrate the combinatorics of the corepresenting map in \cref{construct pre-srep map}, we return to the situation described in \cref{rem lax nat trans for srep}.  Namely, let us restrict our attention to the case that $n=1$, $\alpha(0) = [1]^\opobj$, and $\alpha(1) = [0]^\opobj$.  Then, there are two possibilities for the map $\alpha \in \Nerve(\bD^{op})_1 = \hom_\strcatsup([1],\bD^{op})$: it selects either $[1]^\opobj \xra{\delta_0} [0]^\opobj$ or $[1]^\opobj \xra{\delta_1} [0]^\opobj$.  These are respectively opposite to the map $[0] \ra [1]$ in $\bD$ given by $0 \mapsto 1$ or $0 \mapsto 0$.  Hence, the corresponding corepresenting map $[1] \ra \alpha(0)^\opobj = [1]$ in $\bD$ is respectively either $\id_{[1]}$ or $\const(0)$.
\end{ex}

We now define our main object of interest, the simplicial replacement.

\begin{defn}\label{defn simp repl}
Suppose that $\C$ is cocomplete, and let $\D \xra{F} \C$ be a functor.  We construct a composite
\[ \begin{tikzcd}
\Gr(\Nervei(\D)_\bullet) \arrow{rr} \arrow{rrd} & & \D \times \bD^{op} \arrow{rr}{F \times \id_{\bD^{op}}} \arrow{d} & & \C \times \bD^{op} \arrow{lld} \\
& & \bD^{op}
\end{tikzcd} \]
in which the first horizontal map is given by \cref{construct pre-srep map}.  By Proposition T.4.2.2.7, there is a unique lift in the diagram
\[ \begin{tikzcd}
\Gr(\Nervei(\D)_\bullet) \arrow{r} \arrow{d} & \C \times \bD^{op} \arrow{d} \\
\Gr(\Nervei(\D)_\bullet) \underset{\bD^{op}}{\diamond} \bD^{op} \arrow{r} \arrow[dashed]{ru} & \bD^{op}
\end{tikzcd} \]
such that the composite
\[ \bD^{op} \ra \Gr(\Nervei(\D)_\bullet) \underset{\bD^{op}}{\diamond} \bD^{op} \ra \C \times \bD^{op} \]
takes each $[n]^{op} \in \bD^{op}$ to a colimit of the composite
\[ \Nervei(\D)_n \xra{\{0\}} \Nervei(\D)_0 \simeq \D^\simeq \hookra \D \xra{F} \C . \]
We define the object $\srep(F)_\bullet \in s\C$ to be the composite
\[ \bD^{op} \ra \Gr(\Nervei(\D)_\bullet) \underset{\bD^{op}}{\diamond} \bD^{op} \ra \C \times \bD^{op} \ra \C , \]
and refer to it as the \bit{simplicial replacement} of the functor $F$.
\end{defn}

\begin{lem}\label{srep functorial in target for cocts fctrs}
Suppose that $\C_1 \xra{\chi} \C_2$ is a cocontinuous functor between cocomplete $\infty$-categories, and suppose that $\D \xra{F} \C_1$ is any functor.  Then the composite
\[ \bD^{op} \xra{\srep(F)_\bullet} \C_1 \xra{\chi} \C_2 \]
is canonically equivalent to the object $\srep(\chi \circ F)_\bullet \in s(\C_2)$.
\end{lem}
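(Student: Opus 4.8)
The plan is to run the construction of $\srep(-)_\bullet$ from \cref{defn simp repl} in parallel for $F$ and for $\chi \circ F$, and observe that postcomposition with $\chi$ intertwines the two. First I would note that the map $\Gr(\Nervei(\D)_\bullet) \ra \D \times \bD^{op}$ of \cref{construct pre-srep map} does not involve the target $\infty$-category at all, and neither does the relative join $\Gr(\Nervei(\D)_\bullet) \underset{\bD^{op}}{\diamond} \bD^{op}$ together with its two canonical inclusions; all of this ``scaffolding'' depends only on $\D$. Consequently, the map $\Gr(\Nervei(\D)_\bullet) \ra \C_1 \times \bD^{op}$ appearing in \cref{defn simp repl} for $F$ becomes, after postcomposition with $\chi \times \id_{\bD^{op}} \colon \C_1 \times \bD^{op} \ra \C_2 \times \bD^{op}$, precisely the corresponding map for $\chi \circ F$, since $(\chi \circ F) \times \id_{\bD^{op}} = (\chi \times \id_{\bD^{op}}) \circ (F \times \id_{\bD^{op}})$.

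Next, write $\tilde{G}_F \colon \Gr(\Nervei(\D)_\bullet) \underset{\bD^{op}}{\diamond} \bD^{op} \ra \C_1 \times \bD^{op}$ for the unique lift produced by Proposition T.4.2.2.7 in the construction of $\srep(F)_\bullet$. I would then check that $(\chi \times \id_{\bD^{op}}) \circ \tilde{G}_F$ is a lift of the analogous diagram for $\chi \circ F$: it lies over $\bD^{op}$ because $\chi \times \id_{\bD^{op}}$ is the identity on the second factor, and it restricts along $\Gr(\Nervei(\D)_\bullet) \hookra \Gr(\Nervei(\D)_\bullet) \underset{\bD^{op}}{\diamond} \bD^{op}$ to the map for $\chi \circ F$ by the previous paragraph. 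The crucial point is that it still satisfies the defining pointwise condition: at each $[n]^{op} \in \bD^{op}$, the $\C_1$-component of $\tilde{G}_F$ exhibits $\srep(F)_n$ as the colimit of $\Nervei(\D)_n \xra{\{0\}} \Nervei(\D)_0 \simeq \D^\simeq \hookra \D \xra{F} \C_1$, and since $\chi$ is cocontinuous it carries this colimit cocone to a colimit cocone exhibiting $\chi(\srep(F)_n)$ as the colimit of the same diagram postcomposed with $\chi$. By the uniqueness clause of Proposition T.4.2.2.7 it then follows that $(\chi \times \id_{\bD^{op}}) \circ \tilde{G}_F \simeq \tilde{G}_{\chi \circ F}$, canonically.

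Finally, I would restrict both sides along $\bD^{op} \hookra \Gr(\Nervei(\D)_\bullet) \underset{\bD^{op}}{\diamond} \bD^{op}$ and project to the target; using the canonical identification $\pr_{\C_2} \circ (\chi \times \id_{\bD^{op}}) \simeq \chi \circ \pr_{\C_1}$ this yields
\[ \srep(\chi \circ F)_\bullet \simeq \pr_{\C_2} \circ (\chi \times \id_{\bD^{op}}) \circ \tilde{G}_F \circ \big( \bD^{op} \hookra \Gr(\Nervei(\D)_\bullet) \underset{\bD^{op}}{\diamond} \bD^{op} \big) \simeq \chi \circ \srep(F)_\bullet , \]
as desired. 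The only step requiring genuine care is the verification that $(\chi \times \id_{\bD^{op}}) \circ \tilde{G}_F$ meets the pointwise colimit condition of Proposition T.4.2.2.7 --- that is, that the cocontinuity hypothesis on $\chi$ is invoked at the level of colimit \emph{cocones} rather than merely colimit objects --- but this is immediate from the definition of a colimit-preserving functor; everything else is bookkeeping with the observation that $\Gr(\Nervei(\D)_\bullet)$ and the relative join over $\bD^{op}$ are built from $\D$ alone.
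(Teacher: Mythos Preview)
Your proposal is correct and is precisely a careful unpacking of what the paper leaves implicit: the paper's own proof consists of the single sentence ``This follows directly from \cref{defn simp repl}.'' Your argument spells out exactly why --- the scaffolding depends only on $\D$, postcomposition with $\chi \times \id_{\bD^{op}}$ intertwines the two constructions, and cocontinuity of $\chi$ ensures that the pointwise colimit condition of Proposition T.4.2.2.7 is preserved so that uniqueness applies --- which is the intended content of that one-line proof.
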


\begin{proof}
This follows directly from \cref{defn simp repl}.
\end{proof}

We can now give the main result of this section, the \bit{Bousfield--Kan colimit formula}.

\begin{thm}\label{bousfield--kan}
Let $\C$ be cocomplete, and let $\D \xra{F} \C$ be a functor.  Then there is a canonical equivalence
\[ \colim_\D F \simeq | \srep(F)_\bullet | \]
in $\C$.
\end{thm}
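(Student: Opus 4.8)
The plan is to deduce the formula from the finality of the ``first-vertex'' map, using \cref{theorem A} together with Thomason's homotopy colimit theorem (\cref{groupoid-completion of the grothendieck construction}). \emph{Step 1: rewrite the realization as a colimit over a total space.} Let $p : \Gr(\Nervei(\D)_\bullet) \ra \bD^{op}$ denote the left fibration and $v : \Gr(\Nervei(\D)_\bullet) \ra \D$ the first-vertex map of \cref{construct pre-srep map}, so that the restriction of $F \circ v$ to the fiber $\Nervei(\D)_n$ of $p$ over $[n]^{op}$ is precisely the composite $\Nervei(\D)_n \xra{\{0\}} \Nervei(\D)_0 \simeq \D^\simeq \hookra \D \xra{F} \C$ of \cref{defn simp repl}. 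By the very construction of \cref{defn simp repl} (via Proposition~T.4.2.2.7), $\srep(F)_\bullet$ sends $[n]^{op}$ to the colimit of $F \circ v$ over this fiber. Since $p$ is a cocartesian fibration, these fiberwise colimits assemble to compute the colimit over the total space --- the decomposition of colimits along a cocartesian fibration (see \sec~T.4.2.3, and cf.\ the proof of \cref{global colimit}, where an analogous maneuver is carried out) --- so that we obtain a canonical equivalence
\[ | \srep(F)_\bullet | = \colim_{\bD^{op}} \srep(F)_\bullet \simeq \colim_{\Gr(\Nervei(\D)_\bullet)} (F \circ v) . \]
It therefore suffices to show that $v$ is \emph{final} in the sense of \cref{define final}: then the natural map $\colim_{\Gr(\Nervei(\D)_\bullet)}(F \circ v) \ra \colim_\D F$ is an equivalence, which is exactly the assertion.

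\emph{Step 2: finality of the first-vertex map.} By \cref{theorem A}, it suffices to show that for every $d \in \D$ we have $( \Gr(\Nervei(\D)_\bullet) \times_\D \D_{d/} )^\gpd \simeq \pt_\S$, where the map $\Gr(\Nervei(\D)_\bullet) \ra \D$ is $v$ and the map $\D_{d/} \ra \D$ is the left fibration of \cref{slice cats are l/r fibns}. The crucial point is a natural equivalence
\[ \Gr(\Nervei(\D)_\bullet) \times_\D \D_{d/} \simeq \Gr ( \Nervei(\D_{d/})_\bullet ) \]
of left fibrations over $\bD^{op}$ (the left-hand side is a composite of left fibrations): over $[n]^{op}$, both fibers are identified with the space of those $(n{+}1)$-simplices of $\D$ whose initial vertex is $d$ --- that is, with $\Nervei(\D_{d/})_n$ --- and under this identification the simplicial structure maps agree, the cocartesian transport along the first-vertex map (as spelled out in \cref{construct pre-srep map}) corresponding exactly to the composition within the simplex that underlies the face and degeneracy operators of the slice $\D_{d/}$.

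\emph{Step 3: conclusion.} Granting the equivalence of Step~2, \cref{groupoid-completion of the grothendieck construction} applied to $\Nervei(\D_{d/})_\bullet : \bD^{op} \ra \S \subset \Cati$, together with the identification of the geometric realization of the $\infty$-categorical nerve with groupoid completion, gives
\[ ( \Gr(\Nervei(\D)_\bullet) \times_\D \D_{d/} )^\gpd \simeq \Gr ( \Nervei(\D_{d/})_\bullet )^\gpd \simeq \colim_{\bD^{op}} \Nervei(\D_{d/})_\bullet = | \Nervei(\D_{d/})_\bullet | \simeq ( \D_{d/} )^\gpd . \]
Finally, $\D_{d/}$ has an initial object, namely $\id_d$, so $(\D_{d/})^{op}$ has a terminal object; since groupoid completion is insensitive to passing to opposites (both sides compute $| \Nervei(-)_\bullet |$), \cref{groupoid-completion of infty-cat with terminal object is contractible} yields $(\D_{d/})^\gpd \simeq ((\D_{d/})^{op})^\gpd \simeq \pt_\S$. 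Hence $v$ is final, which together with Step~1 completes the proof.

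\emph{Expected main obstacle.} The delicate point is the identification in Step~2: reconciling the two \emph{a priori} different simplicial objects $[n]^{op} \mapsto \Nervei(\D)_n \times_\D \D_{d/}$ (whose structure maps involve the cocartesian transport of \cref{construct pre-srep map}) and $\Nervei(\D_{d/})_\bullet$ (whose structure maps come from the face and degeneracy operators of the slice) requires carefully unwinding the corepresented description of the first-vertex map given in \cref{construct pre-srep map}. A secondary point needing care is the justification of the decomposition of colimits used in Step~1, i.e.\ that the relative left Kan extension of \cref{defn simp repl} along $\Gr(\Nervei(\D)_\bullet) \hookra \Gr(\Nervei(\D)_\bullet) \underset{\bD^{op}}{\diamond} \bD^{op}$ does have realization $\colim_{\Gr(\Nervei(\D)_\bullet)}(F \circ v)$.
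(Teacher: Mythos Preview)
Your argument is correct, and it takes a genuinely different route from the paper's. The paper proceeds by reduction to the universal case: it replaces $F$ by the Yoneda embedding $\Yo_\D : \D \to \P(\D)$ (using \cref{srep functorial in target for cocts fctrs} and functoriality of colimits), then computes \emph{both} $\colim_\D \Yo_\D$ and $|\srep(\Yo_\D)_\bullet|$ separately, showing that each evaluates at every $d \in \D$ to $(\D_{d/})^\gpd \simeq \pt_\S$; since both are therefore the terminal object of $\P(\D)$, the canonical equivalence follows. In particular, the paper never invokes the decomposition of colimits along a cocartesian fibration, nor does it explicitly establish the finality of $v$.

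Your approach trades the universal-case reduction for two ingredients: the transitivity $\colim_{\bD^{op}} p_!(-) \simeq \colim_{\Gr(\Nervei(\D)_\bullet)}(-)$ (your Step~1), and \cref{theorem A} applied to $v$ (your Step~2). The upshot is a more direct argument that works uniformly in $F$ and isolates the finality of the first-vertex map as the conceptual core; the paper's approach instead buys the word ``canonical'' cheaply (both sides are terminal) and sidesteps the decomposition-of-colimits machinery. It is worth noting that the heart of both arguments is the same computation: your identification $\Gr(\Nervei(\D)_\bullet) \times_\D \D_{d/} \simeq \Gr(\Nervei(\D_{d/})_\bullet)$ (fiberwise, $\Nervei(\D)_n \times_{\Nervei(\D)_0,\{0\}} \Nervei(\D_{d/})_0 \simeq \Nervei(\D_{d/})_n$) is exactly what the paper obtains as $\srep(\Yo)_\bullet(d) \simeq \Nervei(\D_{d/})_\bullet$ after unwinding the Yoneda embedding. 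Your flagged ``main obstacle'' is real but tractable: the transport along cocartesian morphisms in $\Gr(\Nervei(\D)_\bullet)$ (as made explicit in \cref{construct pre-srep map}) sends $(\sigma, d \to \sigma(0))$ along $\varphi^{op}$ to $(\sigma \circ \varphi, d \to \sigma(0) \to \sigma(\varphi(0)))$, which is precisely the simplicial structure map of $\Nervei(\D_{d/})_\bullet$.
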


\begin{proof}
Observe that the colimit of a $\D$-shaped diagram is functorial in cocomplete $\infty$-categories under $\D$ and cocontinuous functors between them.  Hence, in light of \cref{srep functorial in target for cocts fctrs}, it suffices to prove the statement in the universal (i.e.\! initial) case, namely taking the functor $\D \xra{F} \C$ to be the canonical map from $\D$ into its free cocompletion, i.e.\! the Yoneda embedding $\D \xra{\Yo} \P(\D) = \Fun(\D^{op},\S)$.

First of all, we claim that there is a canonical equivalence $\colim_\D(\Yo) \simeq \const(\pt_\S)$ in $\P(\D)$.  To see this, observe that for any $d \in \D$, we have a string of equivalences
\[ (\colim_\D \Yo)(d) \simeq \colim_\D(\ev_d \circ \Yo) \simeq \colim_\D(\hom_\D(d,-)) \]
in $\S$, where the first equivalence is because colimits in $\P(\D)$ are computed pointwise and the second is simply because $\ev_d \circ \Yo \simeq \hom_\D(d,-)$ in $\Fun(\D,\S)$.  Appealing to \cref{groupoid-completion of the grothendieck construction} and the canonical equivalence
\[ \Gr ( \hom_\D(d,-) ) \simeq \D_{d/} \]
in $\LFib(\D)$ of \cref{slice cats are l/r fibns}, we obtain a string of equivalences
\[ \colim_\D(\hom_\D(d,-)) \simeq \Gr( \hom_\D(d,-) )^\gpd \simeq (\D_{d/})^\gpd \simeq \pt_\S , \]
where the last equivalence follows from the dual of \cref{groupoid-completion of infty-cat with terminal object is contractible}.  Hence, $(\colim_\D \Yo)(d) \simeq \pt_\S$ for every $d \in \D$, and so it follows that the terminal map $\colim_\D (\Yo) \ra \const(\pt_\S)$ in $\P(\D)$ is indeed an equivalence.\footnote{This is also proved as Lemma T.5.3.3.2.}

On the other hand, recall that $\srep(\Yo)_n \in \P(\D)$ is the colimit of the composite
\[ \Nervei(\D)_n \xra{\{0\}} \Nervei(\D)_0 \simeq \D^\simeq \hookra \D \xra{\Yo} \P(\D) . \]
For any $d \in \D$, the composite functor
\[ \Nervei(\D)_n \xra{\{0\}} \Nervei(\D)_0 \simeq \D^\simeq \hookra \D \xra{\Yo} \P(\D) \xra{\ev_d} \S \]
classifies the left fibration which is the upper composite map
\[ \begin{tikzcd}
\{ d \} \underset{\Nervei(\D)_0 , \{0\}}{\times} \Nervei(\D)_{n+1} \arrow{r} \arrow{d} & \Nervei(\D)_{n+1} \arrow{r}{\delta_0} \arrow{d}{\{0\}} & \Nervei(\D)_n \\
\{ d \} \arrow[hook]{r} & \Nervei(\D)_0 .
\end{tikzcd} \]
Again appealing to \cref{groupoid-completion of the grothendieck construction} (and the fact that colimits in $\P(\D)$ are computed pointwise), it follows that
\begin{align*}
\srep(\Yo)_n(d)
& \simeq \left( \colim \left( \Nervei(\D)_n \xra{\{0\}} \Nervei(\D)_0 \simeq \D^\simeq \hookra \D \xra{\Yo} \P(\D) \right) \right) (d) \\
& \simeq \colim \left( \Nervei(\D)_n \xra{\{0\}} \Nervei(\D)_0 \simeq \D^\simeq \hookra \D \xra{\Yo} \P(\D) \xra{\ev_d} \S \right) \\
& \simeq \left( \{ d \} \underset{ \Nervei(\D)_0 , \{0\}}{\times} \Nervei(\D)_{n+1} \right)^\gpd \\
& \simeq \{ d \} \underset{\Nervei(\D)_0 , \{0\}}{\times} \Nervei(\D)_{n+1}
\end{align*}
(where the last equivalence follows from the fact that the inclusion $\S \subset \Cati$ is a right adjoint and hence commutes with pullbacks (and the fact that $(-)^\gpd : \Cati \ra \S$ is idempotent)).  Unwinding the definitions of the simplicial structure maps of $\srep(\Yo)_\bullet$, we obtain that in fact
\[ \srep(\Yo)_\bullet(d) \simeq \left( \{ d \} \underset{\Nervei(\D)_0 , \{0\}}{\times} \Nervei(\D)_{\bullet + 1} \right) \simeq \Nervei(\D_{d/})_\bullet . \]
Hence, it follows that
\[ |\srep(\Yo)_\bullet|(d) \simeq | \srep(\Yo)_\bullet(d)| \simeq |\Nervei(\D_{d/})_\bullet| \simeq (\D_{d/})^\gpd \simeq \pt_\S , \]
where again the last equivalence follows from the dual of \cref{groupoid-completion of infty-cat with terminal object is contractible}, and the second-to-last equivalence follows from \cref{rnerves:groupoid-completion of CSSs}.  Therefore, the terminal map $|\srep(\Yo)_\bullet| \ra \const(\pt_\S)$ in $\P(\D)$ is also an equivalence.  It follows that we have a canonical equivalence
\[ \colim_\D \Yo \simeq \const(\pt_\S) \simeq | \srep(\Yo)_\bullet | , \]
which proves the claim.
\end{proof}

\begin{rem}
Our Bousfield--Kan colimit formula (\cref{bousfield--kan}) is certainly inspired by the classical Bousfield--Kan formula (see the original source \cite[Chapter XII, \sec 5]{BousKan-yellow}, or e.g.\! \cite[\sec 18.1]{Hirsch} for a more modern treatment), but it is actually rather different: the latter computes a \textit{homotopy} colimit in a simplicial model category.  Moreover, even in the case that both $\C$ and $\D$ are only 1-categories, it does not generally agree with the formula for a colimit as a coequalizer of coproducts: this additionally requires that $\D$ be \textit{gaunt}.  (Note that that formula is actually evil: it is not invariant under replacing $\D$ by an equivalent category, referring as it does to its actual sets $\Nerve(\D)_0$ and $\Nerve(\D)_1$ of objects and of morphisms.)  On the other hand, when $\D$ is a (1- or $\infty$-)groupoid, then our simplicial replacement (\cref{defn simp repl}) is trivial: in that case we have a canonical equivalence $\Nervei(\D)_\bullet \simeq \const(\D)$ in $s\C$, whence it follows that the simplicial replacement $\srep(F)_\bullet \in s\C$ of a diagram $\D \xra{F} \C$ is already constant at the object $\colim_\D F \in \C$.
\end{rem}

\subsection{Examples of the Bousfield--Kan colimit formula}\label{subsection examples of BK formula}

The Bousfield--Kan colimit formula (\cref{bousfield--kan}) is most interesting (and novel) when the diagram $\infty$-category $\D$ is not merely a 1-category.  For instance, applying it to a pushout diagram and canceling out the redundancies in the resulting geometric realization yields nothing but the original pushout diagram.  We therefore give two inherently $\infty$-categorical examples to illustrate its application.

\begin{ex}
Choose any space $Y \in \S$, and let us take $\D$ to be its ``categorical suspension'': this is an $\infty$-category with two objects $d_1$ and $d_2$, which is determined by the prescriptions that
\begin{itemizesmall}
\item $\hom_\D(d_1,d_1) \simeq \hom_\D(d_2,d_2) \simeq \pt_\S$,
\item $\hom_\D(d_2,d_1) \simeq \es_\S$, and
\item $\hom_\D(d_1,d_2) \simeq Y$.
\end{itemizesmall}
Then, a functor $\D \xra{F} \C$ selects the data of
\begin{itemizesmall}
\item a pair of objects $c_1,c_2 \in \C$, and
\item a map $Y \ra \hom_\S(c_1,c_2)$ in $\S$.
\end{itemizesmall}
Canceling out redundancies (and assuming $\C$ is cocomplete), the Bousfield--Kan colimit formula (\cref{bousfield--kan}) then gives equivalences
\[ \colim_\D(F) \simeq | \srep(F)_\bullet| \simeq \colim \left( \begin{tikzcd}
c_1 \tensoring Y \arrow{r} \arrow{d} & c_1 \\
c_2
\end{tikzcd} \right) \]
in $\C$, where in the pushout
\begin{itemizesmall}
\item the horizontal map is given by $c_1 \tensoring (Y \ra \pt_\S)$, and
\item the vertical map is the adjunct of the chosen map $Y \ra \hom_\C(c_1,c_2)$.
\end{itemizesmall}
\end{ex}

\begin{ex}
Choose any space $Y \in \S$, and let us take $\D = Y^\leftcone \in \Cati$ to be the left cone on it (considered as an $\infty$-groupoid).  Assume for simplicity that $\C$ is bicomplete.  Then, a functor $\D \xra{F} \C$ is determined by
\begin{itemizesmall}
\item its restriction $Y \xra{F'} \C$, and
\item a map $c \ra \lim_Y(F')$ from some object $c \in \C$ (selected by the cone point) to the limit of this restriction.\footnote{If $Y \xra{F'} \C$ is constant, then $\lim_Y(F')$ reduces to a cotensor.  However, there are interesting examples where this is not the case: for instance, if $Y$ is a 1-type, this limit computes the \textit{fixed points} of the corresponding group action.  Of course, a dual observation applies to $\colim_Y(F')$, which appears here as well.  (Another interesting example of a colimit over an $\infty$-groupoid is the \textit{Thom spectrum} construction.)}
\end{itemizesmall}
Canceling out redundancies, the Bousfield--Kan colimit formula (\cref{bousfield--kan}) then gives equivalences
\[ \colim_\D(F) \simeq | \srep(F)_\bullet | \simeq \colim \left( \begin{tikzcd}
c \tensoring Y \arrow{r} \arrow{d} & c \\
\colim_Y(F')
\end{tikzcd} \right) \]
in $\C$, where in the pushout
\begin{itemizesmall}
\item the horizontal map is given by $c \tensoring (Y \ra \pt_\S)$, and
\item the vertical map is induced by the natural transformation $\const_Y(c) \ra F'$ in $\Fun(Y,\C)$ which is adjunct to the chosen map $c \ra \lim_Y(F')$.
\end{itemizesmall}
On an object $c' \in \C$, this pushout corepresents the data of
\begin{itemizesmall}
\item a morphism $\colim_Y(F') \ra c'$ in $\C$ (or equivalently, a morphism $F' \ra \const_Y(c')$ in $\Fun(Y,\C)$, along with
\item a trivialization
\[ \begin{tikzcd}
Y \arrow{r} \arrow{d} & \hom_\C(c,c') \\
\pt_\S \arrow[dashed]{ru}
\end{tikzcd} \]
of the adjunct to the induced composite
\[ c \tensoring Y \ra \colim_Y(F') \ra c' . \]
\end{itemizesmall}
\end{ex}

\subsection{Functoriality of the Bousfield--Kan colimit formula}\label{subsection functoriality of BK formula}

Of course, it is perfectly reasonable to expect that the Bousfield--Kan colimit formula enjoys good functoriality properties, along the lines of those explored in \cref{subsection global co/limit}.  However, rather than pursue a full treatment, in this subsection we exhibit only the mere shadow of such functoriality that we will actually need.

We begin by identifying the simplicial replacement as a left Kan extension.

\begin{lem}\label{srep is a left kan extn}
Suppose that $\C$ is cocomplete, let $\D \xra{F} \C$ be any diagram.  Then, in the commutative diagram
\[ \begin{tikzcd}
& \Gr(\Nervei(\D)_\bullet) \arrow{r} \arrow{d} & \C \times \bD^{op} \arrow{r} & \C \\
\bD^{op} \arrow{r} & \Gr(\Nervei(\D)_\bullet) \underset{\bD^{op}}{\diamond} \bD^{op} \arrow{ru} \arrow[dashed, bend right=15]{rru}
\end{tikzcd} \]
containing the simplicial replacement $\bD^{op} \xra{\srep(F)_\bullet} \C$ as a composite, the vertical functor is a full inclusion and the dotted arrow is a left Kan extension along it.
\end{lem}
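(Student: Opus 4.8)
The plan is to verify the two assertions separately, in both cases by unwinding the construction of $\srep(F)_\bullet$ given in \cref{defn simp repl}. For the first assertion, recall that the $\underset{\bD^{op}}{\diamond}$-construction embeds $\Gr(\Nervei(\D)_\bullet)$ and $\bD^{op}$ as complementary full subcategories of $\Gr(\Nervei(\D)_\bullet) \underset{\bD^{op}}{\diamond} \bD^{op}$: there are no morphisms from the $\bD^{op}$-part back to the $\Gr(\Nervei(\D)_\bullet)$-part, and for $x \in \Gr(\Nervei(\D)_\bullet)$ lying over $\pr(x) \in \bD^{op}$ one has $\hom(x,[n]^{op}) \simeq \hom_{\bD^{op}}(\pr(x),[n]^{op})$. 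In particular the vertical functor in the displayed diagram is a full inclusion; this is immediate from the construction used in \cref{defn simp repl} (Proposition T.4.2.2.7).

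For the second assertion I would appeal to the pointwise criterion for left Kan extensions (Proposition T.4.3.2.15). Write $\pi$ for the left fibration $\Gr(\Nervei(\D)_\bullet) \ra \bD^{op}$, and write $G$ for the composite $\Gr(\Nervei(\D)_\bullet) \ra \D \xra{F} \C$ of the first-vertex map of \cref{construct pre-srep map} with $F$ — this is precisely the restriction of the dotted arrow to $\Gr(\Nervei(\D)_\bullet)$. The claim then reduces to showing that for every object $z$ of $\Gr(\Nervei(\D)_\bullet) \underset{\bD^{op}}{\diamond} \bD^{op}$, the canonical map from the colimit of $G$ over the comma $\infty$-category of $\Gr(\Nervei(\D)_\bullet)$-objects equipped with a morphism to $z$, to the value of the dotted arrow at $z$, is an equivalence in $\C$. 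When $z$ lies in (the image of) $\Gr(\Nervei(\D)_\bullet)$ this is trivial: since the vertical functor is a full inclusion, the indexing $\infty$-category has the terminal object $(z, \id_z)$, so both sides are $G(z)$.

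The substantive case is $z = [n]^{op} \in \bD^{op}$. Here the structure of $\underset{\bD^{op}}{\diamond}$ identifies the indexing $\infty$-category with the ordinary comma $\infty$-category $\Gr(\Nervei(\D)_\bullet) \times_{\bD^{op}} (\bD^{op})_{/[n]^{op}}$, while the value of the dotted arrow at $[n]^{op}$ is by \cref{defn simp repl} exactly $\srep(F)_n = \colim \left( \Nervei(\D)_n \xra{\{0\}} \Nervei(\D)_0 \simeq \D^\simeq \hookra \D \xra{F} \C \right)$. So it suffices to show that the fiber inclusion $\Nervei(\D)_n = \pi^{-1}([n]^{op}) \hookra \Gr(\Nervei(\D)_\bullet) \times_{\bD^{op}} (\bD^{op})_{/[n]^{op}}$ is final; for then the colimit over the comma $\infty$-category agrees with the colimit of the restriction of $G$ along this inclusion, which by \cref{construct pre-srep map} is exactly the composite above. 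To establish finality I would invoke \cref{theorem A}: given an object of the comma $\infty$-category, consisting of a simplex $\sigma$ of $\D$ together with a morphism $a$ from its index down to $[n]^{op}$ in $\bD^{op}$, the relevant slice $\infty$-category unwinds — using that $\pi$ is a left fibration, so that the $\pi$-cocartesian lift of $a$ starting at $\sigma$ is initial among the morphisms out of $(\sigma,a)$ landing in the fiber over $[n]^{op}$ — to the slice $(\Nervei(\D)_n)_{a_!\sigma/}$ of the $\infty$-groupoid $\Nervei(\D)_n$, which is contractible and hence has trivial groupoid completion. (This is the standard fact that fiber inclusions of co/cartesian fibrations into comma $\infty$-categories are final, and could alternatively be cited as such.) Having verified the pointwise criterion at every object, we conclude that the dotted arrow is the left Kan extension of $G$ along the vertical full inclusion.

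The step I expect to be the main obstacle is the careful bookkeeping around the $\underset{\bD^{op}}{\diamond}$-construction — in particular the identification of the indexing $\infty$-categories appearing in the pointwise formula, and the coherence needed to unwind the slice $\infty$-category in the finality argument; given \cref{defn simp repl}, \cref{construct pre-srep map}, and \cref{theorem A}, everything else is formal.
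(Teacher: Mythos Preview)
Your proposal is correct and follows essentially the same route as the paper: verify fullness of the inclusion from the structure of the relative join, then apply the pointwise criterion for left Kan extensions and reduce to finality of the fiber inclusion via Theorem~{\thmA}, using that the relevant comma $\infty$-categories have initial objects (coming from cocartesian lifts). The only cosmetic differences are that the paper explicitly rewrites the $\underset{\bD^{op}}{\diamond}$-construction as a cocartesian fibration over $[1]$ to exhibit fullness, and cites Remark~T.4.3.2.3 (rather than Proposition~T.4.3.2.15) for the pointwise criterion, checking only the objects of $\bD^{op}$ since the others are automatic.
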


\begin{proof}
To see that the vertical functor is a full inclusion, we simply unwind the definition of its target to obtain
\begin{align*}
& \Gr(\Nervei(\D)_\bullet) \underset{\bD^{op}}{\diamond} \bD^{op} \\
& = \Gr(\Nervei(\D)_\bullet)
\coprod_{\left( \Gr(\Nervei(\D)_\bullet) \underset{\bD^{op}}{\times} \bD^{op} \times \{0\} \right)}
\left( \Gr(\Nervei(\D)_\bullet) \underset{\bD^{op}}{\times} \bD^{op} \times [1] \right)
\coprod_{\left( \Gr(\Nervei(\D)_\bullet) \underset{\bD^{op}}{\times} \bD^{op} \times \{1\} \right)}
\bD^{op} \\
& \simeq \left( \Gr(\Nervei(\D)_\bullet) \times [1] \right) \coprod_{ \left( \Gr(\Nervei(\D)_\bullet) \times \{1\} \right) } \bD^{op} .
\end{align*}
That is, this target is precisely the cocartesian fibration over $[1]$ classified by the functor $[1] \ra \Cati$ which selects the projection $\Gr(\Nervei(\D)_\bullet) \ra \bD^{op}$, and our vertical functor is the fiber inclusion over the object $0 \in [1]$; in particular, it is full (as the object $0 \in [1]$ admits no nontrivial automorphisms).

Now, to check that the dotted arrow is a left Kan extension along this full inclusion, by Remark T.4.3.2.3 it suffices to show that for any object
\[ [n]^\opobj \in \bD^{op} \subset \left( \Gr(\Nervei(\D)_\bullet) \underset{\bD^{op}}{\diamond} \bD^{op} \right) , \]
the corresponding fiber inclusion $\Nervei(\D)_n \hookra \Gr(\Nervei(\D)_\bullet)$ induces a functor
\[ \Nervei(\D)_n \ra \left( \Gr(\Nervei(\D)_\bullet) \underset{\left( \Gr(\Nervei(\D)_\bullet) \underset{\bD^{op}}{\diamond} \bD^{op} \right)}{\times} \left( \Gr(\Nervei(\D)_\bullet) \underset{\bD^{op}}{\diamond}\bD^{op} \right)_{/[n]^\opobj} \right) \]
which is final.  This is straightforwardly verified using Theorem {\thmA} (\ref{theorem A}): all of the comma $\infty$-categories whose groupoid completions must be shown to be contractible are easily seen to possess initial objects, and hence the equivalent condition follows from the opposite of \cref{groupoid-completion of infty-cat with terminal object is contractible}.
\end{proof}

We can now describe our desired shadow of functoriality.

\begin{prop}\label{bousfield--kan functorial in source for Cati/C}
Let $\C$ be cocomplete, and suppose that
\[ \begin{tikzcd}
\D \arrow{rr}{H} \arrow{rd}[swap]{F} & & \E \arrow{ld}{G} \\
& \C
\end{tikzcd} \]
is a commutative diagram in $\Cati$.
\begin{enumerate}
\item\label{map on sreps}
There is a canonical induced map
\[ \srep(F)_\bullet \ra \srep(G)_\bullet \]
in $s\C$, which is functorial in the variable $\C$ for cocontinuous functors between cocomplete $\infty$-categories.
\item\label{compatibility of map on sreps}
We have a commutative square
\[ \begin{tikzcd}
\colim_\D F \arrow{r} \arrow{d}[sloped, anchor=north]{\sim} & \colim_\E G \arrow{d}[sloped, anchor=south]{\sim} \\
| \srep(F)_\bullet | \arrow{r} & | \srep(G)_\bullet |
\end{tikzcd} \]
in $\C$, in which
\begin{itemize}
\item the upper map is the induced map on colimits of the global colimit functor (\cref{global colimit}),
\item the lower map is the geometric realization of the canonical map of part \ref{map on sreps}, and
\item the vertical equivalences are those of \cref{bousfield--kan}.
\end{itemize}
\end{enumerate}
\end{prop}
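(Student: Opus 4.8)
The plan is to build both parts entirely from functoriality already in hand: that of the comparison map of \cref{construct pre-srep map} in the variable $\D \in \Cati$, that of the $\diamond$-construction (a pushout), that of left Kan extensions, and the identification of the simplicial replacement as a left Kan extension in \cref{srep is a left kan extn}.

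For part \ref{map on sreps}, I would argue as follows. Functoriality of \cref{construct pre-srep map} applied to $H$ gives a commutative square over $\bD^{op}$ with top row $\Gr(\Nervei(\D)_\bullet) \to \D \times \bD^{op}$, bottom row $\Gr(\Nervei(\E)_\bullet) \to \E \times \bD^{op}$, and left edge $\Gr(\Nervei(H)_\bullet)$. Postcomposing the top row with $F \times \id_{\bD^{op}}$, the bottom row with $G \times \id_{\bD^{op}}$, and projecting to $\C$ --- using the hypothesis $F \simeq G \circ H$ --- exhibits the composite $\Gr(\Nervei(\D)_\bullet) \to \C$ of \cref{srep is a left kan extn} as canonically equivalent to the composite of $\Gr(\Nervei(H)_\bullet)$ with the analogous composite $\Gr(\Nervei(\E)_\bullet) \to \C$ for $G$. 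Since the $\diamond$-construction is a pushout, $\Gr(\Nervei(H)_\bullet)$ (over $\bD^{op}$, together with $\id_{\bD^{op}}$) induces a map $\Gr(\Nervei(\D)_\bullet) \underset{\bD^{op}}{\diamond} \bD^{op} \to \Gr(\Nervei(\E)_\bullet) \underset{\bD^{op}}{\diamond} \bD^{op}$ over $[1]$, compatible with the full inclusions of the $\Gr(\Nervei(-)_\bullet)$ and acting as $\id_{\bD^{op}}$ on the fibers over $1 \in [1]$. Pulling back the left Kan extension defining $\srep(G)_\bullet$ along this map restricts, along the full inclusion of $\Gr(\Nervei(\D)_\bullet)$, to the composite just identified with the diagram defining $\srep(F)_\bullet$; so the universal property of the left Kan extension of \cref{srep is a left kan extn} (for $F$) yields a canonical natural transformation, and restricting it along $\bD^{op} \hookra \Gr(\Nervei(\D)_\bullet) \underset{\bD^{op}}{\diamond} \bD^{op}$ gives the map $\srep(F)_\bullet \to \srep(G)_\bullet$ in $s\C$. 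Its functoriality in $\C$ then follows from \cref{srep functorial in target for cocts fctrs} together with the fact that cocontinuous functors preserve left Kan extensions (these being computed pointwise by colimits).

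For part \ref{compatibility of map on sreps} I would reduce to a universal case. First, every corner and edge of the square is natural in $\C$ for cocontinuous functors: the two colimits because cocontinuous functors preserve colimits; the two geometric realizations by \cref{srep functorial in target for cocts fctrs} together with preservation of geometric realizations; the upper map by \cref{functoriality of global colim}; the lower map by part \ref{map on sreps}; and the two vertical equivalences because the equivalence of \cref{bousfield--kan} is, by its very construction, natural for cocontinuous functors. Given the data of the proposition with $\C$ cocomplete, let $\widetilde{G} : \P(\E) \to \C$ be the cocontinuous extension of $G$ along the Yoneda embedding $\E \xra{\Yo_\E} \P(\E)$; then the entire configuration, including $H$, is the image under $\widetilde{G}$ of the ``universal'' one $\D \xra{\Yo_\E \circ H} \P(\E) \xla{\Yo_\E} \E$, so by the naturality just noted it suffices to check the square when $G = \Yo_\E$ (and $F = \Yo_\E \circ H$). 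But the proof of \cref{bousfield--kan} identifies both $\colim_\E(\Yo_\E)$ and $|\srep(\Yo_\E)_\bullet|$ with $\const(\pt_\S) \in \P(\E)$, which is terminal; since the mapping space out of $\colim_\D(\Yo_\E \circ H)$ into a terminal object is contractible, the square commutes automatically.

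I expect the only real difficulty to be coherence bookkeeping in part \ref{map on sreps}: one must check that the functorialities of \cref{construct pre-srep map}, of the $\diamond$-construction, and of left Kan extensions assemble into a single morphism of factorizations --- in particular that the $\diamond$-functoriality is genuinely the identity on fibers over $1 \in [1]$, so that restriction returns a map of the shape $\srep(F)_\bullet \to \srep(G)_\bullet$ rather than something twisted --- and, for part \ref{compatibility of map on sreps}, that the naturality of the equivalence of \cref{bousfield--kan} for cocontinuous functors out of $\C$ (implicit in, but not isolated by, its proof) is spelled out cleanly enough to support the reduction. Neither point is deep, but both require care.
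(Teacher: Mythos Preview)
Your proposal is correct and follows essentially the same route as the paper: for part~\ref{map on sreps} you both exploit the functoriality of \cref{construct pre-srep map} and the $\diamond$-construction to compare the two left Kan extensions of \cref{srep is a left kan extn}, obtaining the map on simplicial replacements by restriction to $\bD^{op}$; for part~\ref{compatibility of map on sreps} you both reduce by naturality in cocontinuous functors to the universal case $G = \Yo_\E$ and then invoke terminality of $\const(\pt_\S)$ in $\P(\E)$. The only cosmetic difference is that the paper phrases the reduction via the cocontinuous functor $\P(\D) \to \P(\E)$ induced by $H$, whereas you use the cocontinuous extension $\widetilde{G} : \P(\E) \to \C$ of $G$ directly; and the paper packages the terminality argument as ``the right vertical arrow is terminal in $\Fun([1],\P(\E))$'' rather than appealing to contractibility of a hom-space, but these are the same observation.
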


\begin{proof}
For part \ref{map on sreps}, by \cref{srep is a left kan extn} and the functoriality provided by \cref{construct pre-srep map}, we have a commutative diagram
\[ \begin{tikzcd}
\Gr(\Nervei(\D)_\bullet) \arrow{rd} \arrow[hook]{dd} \arrow{rrr} & & & \C \arrow[leftarrow, dashed, bend left=20]{llldd} \\
& \Gr(\Nervei(\E)_\bullet) \arrow{rru} \arrow[hook, crossing over]{dd} \\
\Gr(\Nervei(\D)_\bullet) \underset{\bD^{op}}{\diamond} \bD^{op} \arrow{rd} \\ 
& \Gr(\Nervei(\E)_\bullet) \underset{\bD^{op}}{\diamond} \bD^{op} \arrow[dashed, bend right=20]{rruuu}
\end{tikzcd} \]
in which the vertical arrows are full inclusions and the dotted arrows are left Kan extensions therealong.  As the composite
\[ \left( \Gr(\Nervei(\D)_\bullet) \underset{\bD^{op}}{\diamond} \bD^{op} \right) \ra \left( \Gr(\Nervei(\E)_\bullet) \underset{\bD^{op}}{\diamond} \bD^{op} \right) \ra \C \]
also extends the map $\Gr(\Nervei(\D)_\bullet) \ra \C$ along the given full inclusion, it therefore admits a canonical natural transformation from the dotted map
\[ \Gr(\Nervei(\D)_\bullet) \underset{\bD^{op}}{\diamond} \bD^{op} \ra \C . \]
Restricting to the full subcategory
\[ \bD^{op} \subset \left( \Gr(\Nervei(\D)_\bullet) \underset{\bD^{op}}{\diamond} \bD^{op} \right), \]
we obtain a natural transformation
\[ \begin{tikzcd}
\bD^{op} \arrow[bend left=50]{r}{\srep(F)_\bullet}[swap, transform canvas={yshift=-0.8em}]{\Downarrow} \arrow[bend right=50]{r}[swap]{\srep(G)_\bullet} & \C
\end{tikzcd} \]
which is precisely our desired map in $s\C$.  Moreover, the asserted functoriality follows easily from this argument (recall \cref{functoriality of global colim}).

For part \ref{compatibility of map on sreps}, note that there is a unique cocontinuous functor $\P(\D) \ra \P(\E)$ making the diagram
\[ \begin{tikzcd}
\D \arrow{r}{H} \arrow{d}[swap]{\Yo_\D} & \E \arrow{d}{\Yo_\E} \\
\P(\D) \arrow[dashed]{r} & \P(\E)
\end{tikzcd} \]
commute.\footnote{In fact, by Lemma T.5.1.5.5(1), this functor must be precisely the left Kan extension $(\Yo_\D)_! ( \Yo_\E \circ H)$.}  From here, by \cref{srep functorial in target for cocts fctrs} and the functoriality asserted in part \ref{map on sreps}, it suffices to verify the claim in the case that the functor $\E \xra{G} \C$ is the Yoneda embedding $\E \xra{\Yo_\E} \P(\E)$, so that the functor $\D \xra{F} \C$ is the composite $\D \xra{\Yo_\E \circ H} \P(\E)$.  Hence, it remains to show that we have a commutative square
\[ \begin{tikzcd}
\colim_\D (\Yo_\E \circ H) \arrow{r} \arrow{d}[sloped, anchor=north]{\sim} & \colim_\E \Yo_\E \arrow{d}[sloped, anchor=south]{\sim} \\
| \srep(\Yo_\E \circ H)_\bullet | \arrow{r} & | \srep(\Yo_\E)_\bullet |
\end{tikzcd} \]
in $\P(\E)$ satisfying the described criteria.  But as we have seen in the proof of \cref{bousfield--kan}, if we consider these two vertical maps as objects of $\Fun([1],\P(\E))$, then the one on the right determines a terminal object.  As the datum of this commutative square is equivalent to that of the corresponding morphism in $\Fun([1],\P(\E))$ (reading the square from left to right), it follows that in fact the square must commute in a canonical and unique way.
\end{proof}

\appendix

\section{The Thomason model structure on the $\infty$-category of $\infty$-categories}\label{section thomason}

In this appendix, we equip the $\infty$-category $\Cati$ of $\infty$-categories with a \textit{Thomason model structure} analogous to the classical Thomason model structure on $\strcat$ (though see \cref{different Thomasons}) and observe some of its basic features.  (We refer the reader to \cref{sspaces:section model infty-cats defns} for the definition of a model structure on an $\infty$-category, and to \cref{sspaces:section define kan--quillen model structure} for the definition of the Kan--Quillen model structure on the $\infty$-category $s\S$ of simplicial spaces.)  This model structure provides a convenient language for a number of the results in the main body of the paper.

We begin by constructing it.

\begin{thm}\label{Thomason model str on CSS}
The Kan--Quillen model structure on $s\S$ lifts along the composite adjunction
\[ \begin{tikzcd}[column sep=1.5cm]
s\S 
{\arrow[transform canvas={yshift=0.7ex}]{r}{\leftloc_\CSS}[swap, transform canvas={yshift=0.25ex}]{\scriptstyle \bot} \arrow[transform canvas={yshift=-0.7ex}, hookleftarrow]{r}[swap]{\forget_\CSS}}
& \CSS
{\arrow[transform canvas={yshift=0.7ex}]{r}{\Nervei^{-1}}[swap, transform canvas={yshift=0.05ex}]{\sim} \arrow[transform canvas={yshift=-0.7ex}, leftarrow]{r}[swap]{\Nervei}}
& \Cati .
\end{tikzcd} \]
Moreover, this Quillen adjunction is a Quillen equivalence.
\end{thm}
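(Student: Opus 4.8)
The plan is to produce $(\Cati)_\Thomason$ as the model structure right-transferred along the given composite adjunction, invoking the criterion for lifting a model structure along an adjunction established in \cite{MIC-sspaces}. Write $L = \Nervei^{-1} \circ \leftloc_\CSS : s\S \ra \Cati$ and $R = \forget_\CSS \circ \Nervei : \Cati \ra s\S$ for the composite left and right adjoints. By construction the transferred weak equivalences (resp.\ fibrations) are the maps $f$ of $\Cati$ with $Rf$ a Kan--Quillen weak equivalence (resp.\ fibration) of $s\S$; since $\Nervei$ is an equivalence and $|\Nervei(\C)_\bullet| \simeq \C^\gpd$ by \cref{rnerves:groupoid-completion of CSSs}, while the Kan--Quillen weak equivalences of $s\S$ are exactly the maps inverted by geometric realization $|-| : s\S \ra \S$ (so that $\loc{s\S}{\bW_\KQ} \simeq \S$, recalled in \cref{sspaces:section define kan--quillen model structure}), the transferred weak equivalences are precisely the maps inverted by groupoid completion --- i.e.\ they are the class $\bW_\Thomason$ described in \cref{subsection outline}. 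So the content is to check the hypotheses of the lifting criterion and then the Quillen equivalence assertion.

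The $\infty$-category $\Cati$ is presentable, so the small object argument applies to the images $LI$, $LJ$ of generating sets of cofibrations and trivial cofibrations for the Kan--Quillen model structure on $s\S$, and the only substantive hypothesis is the \emph{acyclicity condition}: every relative $LJ$-cell complex must be a weak equivalence. Here I would exploit that groupoid completion $(-)^\gpd : \Cati \ra \S$ is a left adjoint, hence preserves all colimits; consequently the class of maps it inverts --- which we have just identified with $\bW_\Thomason$ --- is closed under pushout, coproduct, and transfinite composition, since equivalences of $\S$ are closed under these operations in the arrow $\infty$-category. It therefore suffices to show each generator $L(j)$ lies in $\bW_\Thomason$, i.e.\ that $L(j)^\gpd$ is an equivalence. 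But \cref{rnerves:groupoid-completion of CSSs} identifies $(-)^\gpd \circ \Nervei^{-1}$ on $\CSS$ with realization, and the unit of $\leftloc_\CSS \adj \forget_\CSS$ is a $\CSS$-weak equivalence and a fortiori a Kan--Quillen weak equivalence, hence inverted by $|-|$; together these give a natural identification $(-)^\gpd \circ L \simeq |-| : s\S \ra \S$. As each $j$ is a Kan--Quillen weak equivalence and $|-|$ inverts these, $L(j)^\gpd$ is an equivalence, completing the verification and producing the model structure.

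For the Quillen equivalence, observe that $R$ \emph{creates} weak equivalences, so it both reflects them (between fibrant objects, in particular) and preserves them. By the standard criterion it then remains to check that the derived unit is a weak equivalence on every cofibrant object; since $R$ preserves weak equivalences, fibrant replacement of $LX$ is immaterial, so this reduces to showing the ordinary unit $X \ra RLX$ lies in $\bW_\KQ$ for all $X \in s\S$. But under the equivalences above this unit is exactly the $\CSS$-localization map $X \ra \forget_\CSS \leftloc_\CSS X$, which is a $\CSS$-weak equivalence and hence a Kan--Quillen weak equivalence; thus $L \adj R$ is a Quillen equivalence. I expect the acyclicity step to be the main obstacle: everything hinges on pinning down the interaction between the nested classes $\bW_\CSS \subset \bW_\KQ$ of weak equivalences in $s\S$ and on confirming via \cref{rnerves:groupoid-completion of CSSs} that $(-)^\gpd \circ L$ is genuinely the realization functor --- once groupoid completion is known to commute with the relevant colimits, the remaining bookkeeping is formal.
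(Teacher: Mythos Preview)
Your proof is correct and follows essentially the same route as the paper's: both verify the acyclicity condition by reducing to the facts that $(-)^\gpd \circ L \simeq |{-}|$ (via \cref{rnerves:groupoid-completion of CSSs}) and that this functor, being a left adjoint, commutes with the colimits used to build relative $LJ$-cell complexes; and both deduce the Quillen equivalence from the right adjoint creating weak equivalences together with the $\CSS$-localization unit lying in $\bW_\KQ$. Your packaging of the acyclicity step---arguing abstractly that $\bW_\Thomason$ is closed under pushout and transfinite composition because $(-)^\gpd$ preserves colimits---is slightly slicker than the paper's explicit unwinding of a single pushout square, but the content is identical.
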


\begin{proof}
For notational convenience, we prove the statement for the adjunction $\leftloc_\CSS \adj \forget_\CSS$ (which is of course equivalent).

For the first claim, we verify the hypotheses of the lifting theorem for cofibrantly generated model $\infty$-categories (\Cref{sspaces:lift cofgen}) in turn.

First of all, for any $Y \in s\S$ and $Z \in \CSS$ we have that
\[ \hom_\CSS(\leftloc_\CSS(Y),Z) \simeq \hom_{s\S}(Y,\forget_\CSS(Z)) , \]
and so the fact that the sets $\leftloc_\CSS(I_\KQ)$ and $\leftloc_\CSS(J_\KQ)$ of homotopy classes of maps in $\CSS$ permit the small object argument follows from \cref{sspaces:finite ssets are small sspaces}.

Next, we show that the right adjoint $s\S \xhookla{\forget_\CSS} \CSS$ takes relative $\leftloc_\CSS(J_\KQ)$-cell complexes into $\bW_\KQ$.  For this, let us begin by supposing that
\[ \begin{tikzcd}
\leftloc_\CSS(\Lambda^n_i) \arrow{r} \arrow{d} & Y \arrow{d} \\
\leftloc_\CSS(\Delta^n) \arrow{r} & Z
\end{tikzcd} \]
is a pushout square in $\CSS$.  Since $s\S \xra{\leftloc_\CSS} \CSS$ commutes with pushouts (being a left adjoint), we have that
\[ Z \simeq \colim^\CSS \left(
\begin{tikzcd}
\leftloc_\CSS(\Lambda^n_i) \arrow{r} \arrow{d} & \leftloc_\CSS(\forget_\CSS(Y)) \\
\leftloc_\CSS(\Delta^n)
\end{tikzcd} \right) \simeq
\leftloc_\CSS \left( \colim^{s\S} \left(
\begin{tikzcd}
\Lambda^n_i \arrow{r} \arrow{d} & \forget_\CSS(Y) \\
\Delta^n
\end{tikzcd} \right) \right) .
\]
Using \cref{rnerves:groupoid-completion of CSSs} and the fact that colimits commute with colimits, we then obtain the string of equivalences
\begin{align*}
|\forget_\CSS(Z) | & \simeq \left| \forget_\CSS \left( \leftloc_\CSS \left( \colim^{s\S} \left(
\begin{tikzcd}[ampersand replacement=\&]
\Lambda^n_i \arrow{r} \arrow{d} \& \forget_\CSS(Y) \\
\Delta^n
\end{tikzcd} \right) \right) \right) \right| \\
& \simeq
\left| \colim^{s\S} \left(
\begin{tikzcd}[ampersand replacement=\&]
\Lambda^n_i \arrow{r} \arrow{d} \& \forget_\CSS(Y) \\
\Delta^n
\end{tikzcd} \right) \right| \\
& \simeq
\colim^\S \left(
\begin{tikzcd}[ampersand replacement=\&]
|\Lambda^n_i| \arrow{r} \arrow{d}[sloped, anchor=north]{\sim} \& |\forget_\CSS(Y)| \\
|\Delta^n|
\end{tikzcd} \right) \\
& \simeq |\forget_\CSS(Y)| .
\end{align*}
Hence, the map
\[ \forget_\CSS(Y) \ra \forget_\CSS(Z) \]
lies in $\bW_\KQ \subset s\S$.  Now, to prove the claim for more general transfinite compositions, it then suffices to observe that the composite
\[ \CSS \xhookra{\forget_\CSS} s\S \xra{|{-}|} \S \]
is a left adjoint and hence commutes with colimits.  Thus, $\forget_\CSS$ does indeed take relative $\leftloc_\CSS(J_\KQ)$-cell complexes into $\bW_\KQ$.

So, the Kan--Quillen model structure $s\S_\KQ$ does indeed lift along the adjunction as claimed, and it remains to check that the resulting Quillen adjunction is a Quillen equivalence.  For this, suppose we are given \textit{any} objects $Y \in s\S$ and $Z \in \CSS$.  Then, a map
\[ Y \ra \forget_\CSS(Z) \]
in $s\S$ corresponds via the adjunction to the map
\[ \leftloc_\CSS(Y) \ra \leftloc_\CSS(\forget_\CSS(Z)) \simeq Z \]
in $\CSS$.  Since the lifting theorem (\Cref{sspaces:lift cofgen}) produces a model structure for which the right adjoint creates the weak equivalences, the claim follows from \cref{rnerves:groupoid-completion of CSSs}.
\end{proof}

\begin{defn}\label{define Thomason model str}
We refer to the model structure on $\Cati$ defined by \cref{Thomason model str on CSS} as the \bit{Thomason model structure}, denoted $(\Cati)_\Thomason$.
\end{defn}

\begin{rem}\label{Th w.e.'s created by gpd compln}
\cref{rnerves:groupoid-completion of CSSs} implies that the subcategory $\bW_\Thomason \subset \Cati$ is created by the groupoid completion functor $(-)^\gpd : \Cati \ra \S$.  As this functor is a left localization, it therefore induces an equivalence $\loc{\Cati}{\bW_\Thomason} \xra{\sim} \S$.  Moreover, it is not hard to see directly that the adjoint functors $\Nervei^{-1} \circ \leftloc_\CSS \adj \forget_\CSS \circ \Nervei$ induce inverse equivalences $\loc{s\S}{\bW_\KQ} \simeq \loc{\Cati}{\bW_\Thomason}$.
\end{rem}

\begin{rem}\label{Thomason fibcy}
Let us explore what it means for an object $\C \in \Cati$ to be fibrant in the Thomason model structure.  By definition, this means that $\Nervei(\C) \in \CSS \subset s\S$ has the extension property for the set $J_\KQ = \{ \Lambda^n_i \ra \Delta^n \}_{0 \leq i \leq n \geq 1}$ of horn inclusions in $s\Set \subset s\S$.

In fact, the Segal condition on a simplicial space implies that it admits \textit{unique} fillers for the set $\{ \Lambda^n_i \ra \Delta^n \}_{0 < i < n \geq 2}$ of \textit{inner} horn inclusions.  To see this, observe that the inclusion
\[ \left( \Delta^{\{0,1\}} \coprod_{\Delta^{\{1\}}} \cdots \coprod_{\Delta^{\{n-1\}}} \Delta^{\{n-1,n\}} \right) \ra \Lambda^n_i \]
of subobjects of $\Delta^n \in s\Set$ can be constructed as a (finite) composition of pushouts of inner horn inclusions $\Lambda^k_j \ra \Delta^k$ for $k < n$, and the Segal condition is precisely the assertion that a given simplicial space admits unique extensions for the inclusion of the above source into $\Delta^n$ (the ``$n\th$ spine inclusion'').  From here, the assertion follows by induction.

On the other hand, it is already clear that if a (complete) Segal space has the extension property for the outer horn inclusion $\Lambda^2_0 \ra \Delta^2$, then it must be the nerve of an $\infty$-category whose morphisms are all invertible.  Hence, the fibrant objects in the model structure on $\CSS$ of \cref{Thomason model str on CSS} are precisely the \textit{constant} complete Segal spaces (i.e.\! those that are constant as simplicial spaces).  It follows that we can identify the subcategory of fibrant objects in the Thomason model structure as $(\Cati)^f_\Thomason = \S \subset \Cati$, the subcategory of $\infty$-groupoids (i.e.\! spaces).
\end{rem}

In fact, we have the following strengthening of \cref{Thomason fibcy}.\footnote{\cref{Thomason is left localizn} immediately implies the conclusion of \cref{Thomason fibcy} (that $(\Cati)^f_\Thomason = \S \subset \Cati$), but we will want to build on the explicit geometric arguments given there in \cref{Thomason fibns}.}\footnote{\cref{Thomason is left localizn} does \textit{not} follow from the discussion of \cref{sspaces:left localizations give model structures}.  For instance, this left localization is certainly not left exact; the entire point of \cref{section ho-p.b.'s in Thomason} is to obtain sufficient conditions under which it commutes with pullbacks.  See also \cref{Thomason acyclic fibns are all equivces}.}

\begin{prop}\label{Thomason is left localizn}
The Thomason model structure on $\Cati$ is a left localization model structure (in the sense of \cref{sspaces:left localizations give model structures}) with respect to the left localization adjunction
\[ \begin{tikzcd}[column sep=2cm]
\Cati \arrow[transform canvas={yshift=0.7ex}]{r}{(-)^\gpd}[swap, transform canvas={yshift=0.2ex}]{\scriptstyle \bot} \arrow[transform canvas={yshift=-0.7ex}, hookleftarrow]{r}[swap]{\forget_\S}
& \S .
\end{tikzcd} \]
\end{prop}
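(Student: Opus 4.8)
The plan is to verify, one condition at a time, the definition of a left localization model structure recorded in \cref{sspaces:left localizations give model structures}, applied to the adjunction $(-)^\gpd \adj \forget_\S$. First I would note that this really is a left localization adjunction: the right adjoint $\forget_\S : \S \hookra \Cati$ is fully faithful, exhibiting the $\infty$-groupoids as a reflective subcategory of $\Cati$. The two conditions of genuine content are then (i) that $\bW_\Thomason$ consists of exactly the maps inverted by $(-)^\gpd$ --- the $(-)^\gpd$-local equivalences --- and (ii) that the fibrant objects of $(\Cati)_\Thomason$ are exactly the $(-)^\gpd$-local objects, i.e.\ those lying in the essential image of $\forget_\S$. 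But (i) is precisely \cref{Th w.e.'s created by gpd compln} (which itself rests on \cref{rnerves:groupoid-completion of CSSs}), and (ii) is precisely \cref{Thomason fibcy}; so both are already in hand.

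What remains --- and where the real work lies --- is to exhibit $(\Cati)_\Thomason$ as a left Bousfield localization, i.e.\ to identify its cofibrations with those of an unlocalized model structure on $\Cati$ carrying no homotopy theory beyond that of $\Cati$ itself. Here I would mirror the simplicial-space picture. By its construction, $s\S_\KQ$ is a left Bousfield localization of a base model structure on $s\S$ whose weak equivalences are the levelwise equivalences and whose localization adjunction is $|{-}| = \colim_{\bD^{op}} : s\S \adjarr \S : \const$; since left Bousfield localization does not alter the (acyclic) cofibrations, $s\S_\KQ$ and this base structure have the same cofibrations (generated by $I_\KQ$). Lifting the base structure along the composite right adjoint $\forget_\CSS \circ \Nervei$ --- the lifting hypotheses being checked just as in the proof of \cref{Thomason model str on CSS} --- produces a model structure on $\Cati$ whose weak equivalences are the equivalences of $\infty$-categories, whose cofibrations are generated by $\leftloc_\CSS(I_\KQ)$ (hence agree with those of $(\Cati)_\Thomason$), and in which every object is both fibrant and cofibrant. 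Invoking \cref{Thomason model str on CSS} once more, $(\Cati)_\Thomason$ is then the left Bousfield localization of this unlocalized structure at the images of the generating acyclic cofibrations of $s\S_\KQ$. Combined with (i) and (ii), this is exactly the assertion that $(\Cati)_\Thomason$ is a left localization model structure with respect to $(-)^\gpd \adj \forget_\S$. As the footnotes flag, none of this follows from \cref{sspaces:left localizations give model structures} directly --- that discussion concerns left \emph{exact} localizations, whereas $(-)^\gpd$ is not left exact (the whole of \cref{section ho-p.b.'s in Thomason} being devoted to partial substitutes for that failure); instead the argument bootstraps off \cref{Thomason model str on CSS}, \cref{rnerves:groupoid-completion of CSSs}, and the corresponding facts for $s\S$.

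The main obstacle I anticipate is the bookkeeping in that middle step: one must check that the two left Bousfield localizations of the base model structure on $s\S$ at play --- the Kan--Quillen one and the complete-Segal-space one (the latter producing $\CSS \simeq \Cati$) --- interact compatibly, so that first passing to complete Segal spaces and then localizing at the images of the Kan--Quillen maps yields the \emph{same} model structure as localizing at the union of the two localizing sets before passing to $\Cati$. This compatibility is, however, exactly what \cref{Thomason model str on CSS} records --- the Kan--Quillen structure lifts along $s\S \adjarr \CSS \xra{\sim} \Cati$ --- so no new analytic input should be needed; the difficulty is organizational rather than substantive. As a sanity check, one can confront the resulting picture with \cref{Thomason acyclic fibns are all equivces}: left Bousfield localization leaves the acyclic fibrations unchanged, and in the unlocalized structure these are visibly the equivalences, consistent with that later result.
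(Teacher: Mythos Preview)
Your proposal misidentifies the condition that actually needs to be verified. From the paper's proof it is clear that a \emph{left localization model structure} in the sense of \cref{sspaces:left localizations give model structures} has $\bW$ equal to the local equivalences and $\bC$ equal to \emph{all} maps; since a model structure is determined by $\bW$ and $\bC$, and since $\bW_\Thomason$ is already known to consist of the $(-)^\gpd$-equivalences (\cref{Th w.e.'s created by gpd compln}), the entire content of the proposition is the assertion that \emph{every} map in $\Cati$ is a Thomason cofibration. Your conditions (i) and (ii) do not address this, and your Bousfield-localization argument does not establish it either: you only observe that the Thomason cofibrations are generated by $\leftloc_\CSS(I_\KQ)$ --- which is just their definition --- and that a hypothetical lifted ``base'' structure would share the same generating set. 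Nothing in your outline shows that $\llp(\rlp(\leftloc_\CSS(I_\KQ)))$ exhausts all maps in $\Cati$.

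This is precisely the nontrivial step, and the paper handles it by a genuinely different mechanism than anything in your sketch: it invokes the Joyal--Tierney comparison. Concretely, the composite $s\Set \hookra s\S \xra{\leftloc_\CSS} \CSS$ is modeled by the left Quillen equivalence $\const^\lw : s\Set_\Joyal \ra ss\Set_\Rezk$, so any map in $\CSS$ can be presented by a cofibration in $s\Set_\Joyal$; since $\bC^{s\Set}_\Joyal = \bC^{s\Set}_\KQ$, that same map is a Kan--Quillen cofibration in $s\Set \subset s\S$, hence its image under $\leftloc_\CSS$ is a Thomason cofibration. Your proposed route through a lifted ``base'' model structure on $\Cati$ would still need exactly this input to conclude that the lifted cofibrations are all maps (a model structure with $\bW$ equal to the equivalences need not have $\bC$ equal to all maps), so the organizational detour buys nothing and the essential idea --- the Joyal--Tierney comparison and the coincidence of Joyal and Kan--Quillen cofibrations in $s\Set$ --- is missing.
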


\begin{proof}
Observe that a model structure on an $\infty$-category is clearly determined by its subcategories of weak equivalences and of cofibrations (just as a model structure on a 1-category).  So, it only remains to check that all maps in $(\Cati)_\Thomason$ are cofibrations.  For this, it suffices to present any map in $\CSS$ as the image under $s\S \xra{\leftloc_\CSS} \CSS$ of a cofibration in $s\S_\KQ$.  Note that the inclusion $s\Set \subset s\S$ induces an inclusion $\bC^{s\Set}_\KQ \subset \bC^{s\S}_\KQ$; in fact, we will present any map in $\CSS$ as the image of a cofibration in $s\Set_\KQ$.

To accomplish this, we begin by observing that the composite functor
\[ s\Set \hookra s\S \xra{\leftloc_\CSS} \CSS \]
consists of a right adjoint followed by a left adjoint.  In fact, these are both presented by Quillen functors: the first functor is presented by the right Quillen functor in the evident Quillen adjunction
\[ \pi_0^\lw : s(s\Set_\KQ)_\Reedy \adjarr s\Set_\triv : \const^\lw \]
(where in the left adjoint we slightly abuse notation by using the symbol $\pi_0$ to refer to the composite $s\Set \ra \loc{s\Set}{\bW_\KQ} \simeq \S \xra{\pi_0} \Set$), while the second functor is presented by the left Quillen functor in the left Bousfield localization
\[ \id_{ss\Set} : s(s\Set_\KQ)_\Reedy \adjarr ss\Set_\Rezk : \id_{ss\Set} \]
(see (the proof of) \cite[Theorem 7.2]{RezkCSS}).  Moreover, all objects of $s\Set_\triv$ are fibrant while all objects of $s(s\Set_\KQ)_\Reedy$ are cofibrant, so this composite of Quillen functors does not need to be corrected at either stage in order to compute the value of the corresponding adjoint functor of $\infty$-categories.  On the other hand, this composite functor
\[ s\Set_\triv \xra{\const^\lw} s(s\Set_\KQ)_\Reedy \xra{\id_{ss\Set}} ss\Set_\Rezk \]
of model categories is precisely the functor underlying the left Quillen equivalence $\const^\lw : s\Set_\Joyal \ra ss\Set_\Rezk$ of \cite[Theorem 4.11]{JT}.  It therefore follows that the above composite functor of $\infty$-categories induces an equivalence
\[ \loc{s\Set}{\bW_\Joyal} \xra{\sim} \CSS \]
from the localization of $s\Set$ at the subcategory $\bW_\Joyal \subset s\Set$ to the $\infty$-category of complete Segal spaces.\footnote{Of course, we already knew that $\loc{s\Set}{\bW_\Joyal}$ was equivalent to $\CSS$ (since it is equivalent to $\Cati$); the new information here is that this particular composite functor (of $\infty$-categories) \textit{also} induces this equivalence.}

We can now easily achieve our goal.  Since $\bC^{s\Set}_\KQ = \bC^{s\Set}_\Joyal$, it follows that we can simply choose a cofibration in $s\Set_\Joyal$ presenting our given map in $\CSS \simeq \Cati$: considering this map of simplicial sets as a map of discrete simplicial spaces, its image under the functor $s\S \xra{\leftloc_\CSS} \CSS$ is precisely the chosen map.
\end{proof}

\begin{rem}\label{different Thomasons}
We observe that the Thomason model structure on $\Cati$ does \textit{not} extend the original Thomason model structure on $\strcat$: the model category $\strcat_\Thomason$ is not a model subcategory of the model $\infty$-category $(\Cati)_\Thomason$ (in the sense of \cref{sspaces:defn model subcat}, and ignoring the fact that $\strcat$ is not, strictly speaking, a subcategory of $\Cati$ at all).  However, the weak equivalences remain unchanged: the subcategory $\bW^\strcatsup_\Thomason \subset \strcat$ is pulled back from the subcategory $\bW^\Cati_\Thomason \subset \Cati$ along the composite functor
$ \strcat \ra \Cat \hookra \Cati$.
To illustrate this, we recall the history of this classical model category.

To begin, we recall the main point: categories can individually be considered as ``presentations of spaces'' (via simplicial sets) via the nerve functor $\Nerve : \strcat \ra s\Set$.  Thus, it is natural to wonder whether this can be extended to a global presentation of the \textit{homotopy category} of spaces (i.e.\! $\ho(\S) \simeq s\Set [ \bW_\KQ^{-1}]$) as some localization of the category $\strcat$ of categories.  As a first step in this direction, it was proved in \cite[3.3.1]{Illusie-cotangent} (but attributed there to Quillen) that the nerve functor $\strcat \xra{\Nerve} s\Set$ does indeed induce an equivalence $\strcat[\bW_\Thomason^{-1}] \xra{\sim} s\Set[\bW_\KQ^{-1}]$ on (1-categorical) localizations.  In other words, the relative category $(\strcat,\bW_\Thomason) \in \strrelcat$ has as its (1-categorical) localization the homotopy category $\ho(\S)$ of spaces, as desired.

On the other hand, relative categories are not so easy to work with, and so one might then further wonder whether this relative category structure can be promoted to a model category structure.  Now, the most obvious way that one might hope to obtain this would be to simply lift the classical Kan--Quillen model structure (as in \cref{sspaces:lift cofgen}, but of course really just using \cite[Theorem 11.3.2]{Hirsch}) along the adjunction $\leftloc_\strcatsup : s\Set \adjarr \strcat : \Nerve$.  However, it is easy to see that such a Quillen adjunction could not possibly be a Quillen equivalence: the fibrant objects of $\strcat$ would be precisely the subcategory $\strgpd \subset \strcat$ of groupoids, but these (or rather their nerves) do not model all objects of $s\Set[\bW_\KQ^{-1}]$, and so the derived right adjoint could not possibly be surjective.

But in \cite{ThomasonCat}, Thomason showed that if we instead lift the Kan--Quillen model structure along the composite right adjoint
\[ s\Set_\KQ \xla{\Ex} s\Set \xla{\Ex} s\Set \xla{\Nerve} \strcat , \]
then the resulting Quillen adjunction \textit{is} a Quillen equivalence.\footnote{Heuristically, one might say that the $\Ex$ functor ``makes more simplicial sets fibrant''.  Indeed, recall that it comes equipped with a natural transformation $\id_{s\Set} \ra \Ex$, and the resulting transfinite composition defines a fibrant replacement functor on $s\Set_\KQ$ (see \cite[Chapter III, \sec 4]{GJ}).}\footnote{The $\Ex$ functor is not only a right Quillen equivalence from $s\Set_\KQ$ to itself, but it is also a relative functor -- indeed, it defines a weak equivalence in $\strrelcat_\BarKan$.  Thus, even though here we are (crucially!) not only applying it to fibrant objects of $s\Set_\KQ$, in the end this composite still presents an equivalent map in $\Cati$ (namely $\S \xra{\id_\S} \S$).}  This defines what is now called the \textit{Thomason model structure} on $\strcat$.  From this description, it is clear that the model structure $(\Cati)_\Thomason$ does not extend the model structure $\strcat_\Thomason$.  For instance, it follows from \cref{Thomason is left localizn} that all objects of $(\Cati)_\Thomason$ are cofibrant, whereas \cite[Proposition 5.7]{ThomasonCat} asserts that all cofibrant objects of $\strcat_\Thomason$ are in fact posets.  Moreover, that same result also implies that their notions of fibrancy disagree: it follows from it that any bifibrant object of $\strcat_\Thomason$ is a poset, whereas according to \cref{Thomason fibcy} the fibrant objects of $(\Cati)_\Thomason$ are precisely the $\infty$-groupoids.\footnote{As posets are gaunt, the composites $\strcat \ra \Cati \xra{\Nervei} s\S$ and $\strcat \xra{\Nerve} s\Set \hookra s\S$ are equivalent on such objects.}
\end{rem}

\begin{rem}
One way to interpret \cref{different Thomasons} is to say that the model $\infty$-category $(\Cati)_\Thomason$ entirely accounts for the quirky definition of the model category $\strcat_\Thomason$: in this sense, the only ``obstruction'' to obtaining a model structure on $\strcat$ presenting $\S$ by lifting directly along the nerve functor is the lack of would-be fibrant objects in $\strcat$.  While not every space is presented by a groupoid, certainly every space is presented by an $\infty$-groupoid (!), and so this obstruction vanishes when we pass from $\strcat$ to $\Cati$.
\end{rem}

\begin{rem}\label{Thomason fibns}
In contrast with \cref{Thomason fibcy}, it is not so straightforward to characterize which maps in $(\Cati)_\Thomason$ are fibrations.\footnote{Nor should this necessarily be expected to be straightforward: by the lifting axiom {\liftingaxiom} for model $\infty$-categories and \cref{Thomason is left localizn}, we have $\bF_\Thomason = \rlp( (\bW \cap \bC)_\Thomason) = \rlp(\bW_\Thomason)$.}  By definition, this would be a functor $\C \xra{F} \D$ in $\Cati$ such that the corresponding map
\[ \Nervei(\C) \xra{\Nervei(F)} \Nervei(\D) \]
on nerves in $\CSS \subset s\S$ has $\rlp(I_\KQ)$.  On the one hand, arguments similar to those of \cref{Thomason fibcy} imply that these maps likewise admit \textit{unique} lifts for the \textit{inner} horn inclusions.  On the other hand, the condition that this map have the right lifting property against the outer horn inclusions seems to be a good deal more subtle.
\begin{itemize}

\item At $n=1$, the requirement that our map have the right lifting property against the outer horn inclusion $\Lambda^1_0 \ra \Delta^1$ (resp.\! the outer horn inclusion $\Lambda^1_1 \ra \Delta^1$) is equivalent to the condition that for all objects $c \in \C$, the functor $\C_{/c} \ra \D_{/F(c)}$ (resp.\! the functor $\C_{/c} \ra \D_{/F(c)}$) is surjective.

 \item At $n=2$, even just the requirement that our map have the right lifting property against the outer horn inclusion $\Lambda^2_0 \ra \Delta^2$ already implies that the equivalences in $\C$ are created by the functor $\C \xra{F} \D$.  However, this does not appear to be a sufficient condition.  Of course, one can at least rephrase the condition as follows: for our map to have the right lifting property against $\Lambda^2_0 \ra \Delta^2$ (resp.\! $\Lambda^2_2 \ra \Delta^2$), it must be the case that, given any two maps $\varphi$ and $\psi$ in $\C$ whose sources (resp.\! targets) have been identified, then any factorization of one of the maps $F(\varphi)$ or $F(\psi)$ in $\D$ through the other must already exist in $\C$.

\item Requiring that our map have the right lifting property against the higher outer horn inclusions appears to demand similar but even more exotic properties of our original functor $\C \xra{F} \D$.
 \end{itemize}
 \end{rem}

\begin{rem}\label{Thomason acyclic fibns are all equivces}
Combining \cref{Thomason is left localizn} with the discussion of \cref{sspaces:left localizations give model structures}, we obtain that $(\bW \cap \bF)_\Thomason = (\Cati)^\simeq \subset \Cati$; in other words, any fibration in $(\Cati)_\Thomason$ which induces an equivalence on groupoid completions must in fact itself be an equivalence.  In light of the discussion of \cref{Thomason fibns} regarding the complexities of the subcategory $\bF_\Thomason \subset \Cati$, this appears to be a rather nontrivial fact.\footnote{It is not so hard to see directly that if $\D \in \S \subset \Cati$, then a functor $\C \xra{F} \D$ is in $\bF_\Thomason$ if and only if we also have $\C \in \S \subset \Cati$.  (Thus, $\S_\triv \subset (\Cati)_\Thomason$ is a model subcategory.)  From here, it is clear at least that $\left( (\bW \cap \bF)_\Thomason \cap \S \right) = \S^\simeq \subset \Cati$.}
\end{rem}

\begin{rem}
Both Thomason model structures $(\Cati)_\Thomason$ and $\strcat_\Thomason$ are rather quirky in their own ways.  On the other hand, recalling \cref{pullback of cocart fibn}, it appears that there should exist the structure of an ``$\infty$-category of fibrant objects'' structure on $\Cati$ (or a ``category of fibrant objects'' structure on $\strcat$), in which the co/cartesian fibrations $\D \ra \C$ classified by functors $\C \ra \Cati$ that have property {\propQ} are among the fibrations.\footnote{With the model $\infty$-category $(\Cati)_\Thomason$ in hand, this should be obtainable from arguments along the lines of those contained in \cite{BrownKSthesis}.}  In some vague sense, this would appear to be a more ``true'' articulation of the role of $\Cati$ (or of $\strcat$) as a presentation of $\S$ than either of the corresponding Thomason model structures.
\end{rem}

\bibliographystyle{amsalpha}
\bibliography{gr}{}

\end{document}